\documentclass[reqno,10pt]{amsart}
\usepackage{amsmath,amsthm,amsfonts,amssymb,amscd}

\theoremstyle{plain}
\newtheorem{maintheorem}{Theorem}

\newtheorem{theorem}{Theorem }[section]
\newtheorem{proposition}[theorem]{Proposition}
\newtheorem{lemma}[theorem]{Lemma}
\newtheorem{corollary}[theorem]{Corollary}

\theoremstyle{definition}
\newtheorem{remark}[theorem]{Remark}

\newtheorem{problem}[theorem]{Problem}

\newcommand{\C}{\ensuremath{\mathbb{C}}}
\newcommand{\R}{\ensuremath{\mathbb{R}}}

\newcommand{\Z}{\ensuremath{\mathbb{Z}}}
\newcommand{\N}{\ensuremath{\mathbb{N}}}

\newcommand{\sps}{p}
\newcommand{\sqs}{q}
\newcommand{\Proj}{\mathbb{P}}
\newcommand{\bx}{{\mathbf{x}}}
\newcommand{\by}{{\mathbf{y}}}

\newcommand{\sx}{x}
\newcommand{\sy}{y}

\newcommand{\sA}{A}
\newcommand{\sB}{B}
\newcommand{\sC}{C}
\newcommand{\sD}{D}

\newcommand{\sK}{K}
\newcommand{\mC}{c}

\newcommand{\const}{\operatorname{const}}
\newcommand{\vep}{\varepsilon}
\newcommand{\supp}{\operatorname{supp}}

\newcommand{\cB}{{\mathcal B}}

\newcommand{\cD}{{\mathcal D}}

\newcommand{\cF}{{\mathcal F}}
\newcommand{\cG}{{\mathcal G}}

\newcommand{\cK}{{\mathcal K}}
\newcommand{\cL}{{\mathcal L}}
\newcommand{\cM}{{\mathcal M}}

\newcommand{\cP}{{\mathcal P}}
\newcommand{\cQ}{{\mathcal Q}}

\newcommand{\cS}{{\mathcal S}}

\newcommand{\cV}{{\mathcal V}}

\newcommand{\cX}{{\mathcal X}}

\newcommand{\cY}{{\mathcal Y}}
\newcommand{\cZ}{{\mathcal Z}}

\newcommand{\Stat}{\operatorname{Stat}}
\newcommand{\diam}{\operatorname{diam}}
\newcommand{\id}{\operatorname{id}}
\newcommand{\CC}{\mathbb{C}}
\newcommand{\RR}{\mathbb{R}}

\newcommand{\GL}{\operatorname{GL}}
\newcommand{\SL}{\operatorname{SL}}
\newcommand{\prob}{\operatorname{probability}}
\newcommand{\quand}{\quad\text{and}\quad}

\newcommand{\nsubset}{{\not\subset}}

\begin{document}
\title[Continuity of Lyapunov exponents]
{Continuity of Lyapunov exponents \\ for random 2D matrices}
\author{Carlos Bocker-Neto and Marcelo Viana}
\thanks{C.B.-N. was supported by a CNPq and FAPERJ doctoral scholarship.
M.V. is partially supported by CNPq, FAPERJ, and PRONEX-Dynamical Systems.}
\address{IMPA, Est. D. Castorina 110 \\ 22460-320 Rio de Janeiro, RJ, Brazil}
\email{bocker@impa.br, viana@impa.br}
\date{}
\begin{abstract}
The Lyapunov exponents of locally constant $\GL(2,\CC)$-cocycles over
Bernoulli shifts depend continuously on the cocycle and on the invariant
probability. The Oseledets decomposition also depends continuously on the
cocycle, in measure.
\end{abstract}

\maketitle

\tableofcontents \newpage

\section{Introduction}

Let $A_1$, \dots, $A_m$ be invertible $2$-by-$2$ matrices
and $p_1$, \dots, $p_m$ be (strictly) positive numbers with $p_1+\cdots+p_m=1$.
Consider
$$
L^n = L_{n-1}\cdots L_1 L_0, \quad n\ge 1,
$$
where the $L_j$ are independent random variables with identical probability
distributions, given by
$$
\prob(\{L_j=A_i\}) = p_i \quad\text{for all $j\ge 0$ and $i=1, \dots, m$.}
$$
It is a classical fact, going back to Furstenberg, Kesten~\cite{FK60}, that there
exist numbers $\lambda_+$ and $\lambda_-$ such that
\begin{equation}\label{eq.FK}
\lim_{n\to\infty}\frac 1n \log \|L^n\| = \lambda_+
\quand
\lim_{n\to\infty}\frac 1n \log \|(L^n)^{-1}\|^{-1} = \lambda_-
\end{equation}
almost surely. The results in this paper imply that these \emph{extremal Lyapunov
exponents} always vary continuously with the choice of the matrices and the
probability weights:

\begin{maintheorem}\label{t.finitecase}
The extremal Lyapunov exponents $\lambda_+$ and $\lambda_-$ depend
continuously on $(A_1, $ $\dots,$ $A_m,p_1,$ $\dots, p_m)$ at all points.
\end{maintheorem}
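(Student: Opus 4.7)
My plan is to combine a free semicontinuity argument with a case analysis of the Lyapunov spectrum at the limit cocycle. By Kingman's subadditive ergodic theorem,
$$
\lambda_+ = \inf_{n\geq 1}\frac{1}{n}\sum_{i_1,\dots,i_n=1}^m p_{i_1}\cdots p_{i_n}\log\|A_{i_n}\cdots A_{i_1}\|,
$$
which is an infimum of continuous functions of $(A_i,p_i)$, hence upper semicontinuous. The same argument applied to inverses makes $\lambda_-$ lower semicontinuous, and since $\lambda_++\lambda_- = \sum_i p_i\log|\det A_i|$ is manifestly continuous, the whole theorem reduces to proving that $\lambda_+$ is lower semicontinuous.

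Fix a limit point $(A^0,p^0)$ and a sequence $(A^k,p^k)\to (A^0,p^0)$, and set $\mu^k=\sum_i p^k_i\delta_{A_i^k}$. The main split is on whether the limit cocycle has simple spectrum and is strongly irreducible. In that favorable case, I would choose $\mu^k$-stationary probabilities $\nu^k$ on $\Proj^1$ and pass to a weak-$*$ limit $\nu^\infty$, which is $\mu^0$-stationary and, by Furstenberg's theorem, non-atomic. Furstenberg's formula
$$
\lambda_+^k = \int \log\frac{\|Av\|}{\|v\|}\,d\mu^k(A)\,d\nu^k(v)
$$
then passes to the limit because the integrand is continuous except on the finitely many exceptional directions of the $A^0_i$, which $\nu^\infty$ does not charge.

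The remaining case is the degenerate one: $\lambda_+^0=\lambda_-^0$, or $\mu^0$ preserves a finite subset of $\Proj^1$. Furstenberg's dichotomy yields a common invariant direction after passing to a finite-index sub-semigroup, so the limit cocycle is virtually triangularizable and its exponents equal integrals of logs of diagonal entries, which depend continuously on the parameters. The perturbative task is to show that, for $(A^k,p^k)$ close to $(A^0,p^0)$ but no longer of triangular form, $\lambda_+^k$ still tends to $\lambda_+^0$. The strategy is to prove that every $\mu^k$-stationary measure must concentrate near the invariant direction of $\mu^0$, with concentration quantified by the perturbation size, so that the Furstenberg integrand for $\lambda_+^k-\lambda_-^k$ is uniformly small.

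\emph{Main obstacle.} The quantitative concentration claim in the degenerate case is the genuine difficulty. Small perturbations of a (virtually) reducible cocycle typically open a positive Lyapunov gap, and one must rule out that this gap persists as the perturbation vanishes. Controlling the stationary measure of the perturbation requires a careful analysis of the associated Markov operator in a neighborhood of a degenerate cocycle, balancing the mixing produced by the random product against the approximate invariance of the distinguished direction; the fact that the fiber dimension is exactly two is crucial here, since $\Proj^1$ has enough rigidity (only finite invariant sets besides the strongly irreducible alternative) to make the dichotomy clean.
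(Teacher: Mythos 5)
Your global architecture is essentially the paper's: upper semicontinuity of $\lambda_+$ from the subadditive formula, continuity of $\lambda_++\lambda_-$, reduction to lower semicontinuity of $\lambda_+$, the (almost) irreducible case handled through stationary measures and Furstenberg's formula, and the reducible/diagonal limit identified as the real problem, to be resolved by showing that stationary measures of nearby non-reducible cocycles concentrate near a distinguished direction. But the proposal stops exactly where the theorem's content begins. The quantitative concentration statement you flag as the ``main obstacle'' is the paper's key Proposition, and its proof is the bulk of the work (two sections): one compares the perturbed M\"obius maps $\hat B_x^{-1}$ with model diagonal contractions, uses stationarity to derive mass-balance inequalities between annuli around the repelling direction together with a summation lemma that converts them into smallness of the mass of large disks, and then runs a separate analysis according to whether the perturbed maps have approximate common fixed points, using a disjointness statement for preimages of small disks near such fixed points. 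None of this is present, even in outline, so the heart of the argument is missing rather than merely deferred.

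Two further points where the sketch as written would go wrong. First, in the diagonal limit the set of stationary measures is the whole segment between the two Dirac masses at the invariant directions, so ``concentration near the invariant direction of $\mu^0$'' is ambiguous and insufficient: you must show concentration near the direction realizing $\lambda_+(A^0,p^0)$ (the attractor $\delta_\infty$ of the projective random walk), because only that endpoint yields the lower bound $\lambda_+^k\ge\lambda_+^0-\rho$; your stated target, making ``the Furstenberg integrand for $\lambda_+^k-\lambda_-^k$ uniformly small,'' is the wrong inequality when $\lambda_+^0>\lambda_-^0$ (when $\lambda_+^0=\lambda_-^0$ lower semicontinuity is automatic and nothing needs proving). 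Second, your dichotomy via strong irreducibility plus passage to a finite-index sub-semigroup is looser than what is needed: inducing on a sub-semigroup changes the base dynamics and does not by itself reduce the continuity problem; the paper instead proves, via $u$-states, Ledrappier's invariance principle and uniqueness of the $u$-state realizing $\lambda_+$, that any discontinuity point must be simultaneously \emph{diagonalizable} on a full-measure set (two common invariant directions, not just a triangularization), which is the clean reduction that makes the subsequent perturbative analysis on $\Proj(\CC^2)$ possible.
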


This conclusion holds in much more generality. Indeed, we may take the probability
distribution of the random variables $L_j$ to be any probability measure $\nu$ on
$\GL(2,\CC)$ with compact support. Let $\lambda_+(\nu)$ and $\lambda_-(\nu)$,
respectively, denote the values of the (almost certain) limits in \eqref{eq.FK}.
Then we have:

\begin{maintheorem}\label{t.generalcase}
For every $\vep>0$ there exists $\delta>0$ and a weak$^*$ neighborhood $V$ of $\nu$ in
the space of probability measures on $\GL(2,\CC)$ such that
$|\lambda_\pm(\nu)-\lambda_\pm(\nu')| < \vep$ for every probability measure $\nu'\in V$
whose support is contained in the $\delta$-neighborhood of the support of $\nu$.
\end{maintheorem}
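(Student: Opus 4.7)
The plan is to deduce Theorem~\ref{t.generalcase} from Theorem~\ref{t.finitecase} by finite-support approximation, leveraging the compactness of $K:=\supp\nu$ in $\GL(2,\CC)$. Since $\|A\|$ and $\|A^{-1}\|$ are bounded uniformly on $K$, the function $(A,v)\mapsto\log\bigl(\|Av\|/\|v\|\bigr)$ is uniformly continuous on $K\times\Proj^1(\CC)$. Fix $\vep>0$ and a small $\eta>0$, to be chosen in terms of $\vep$. Partition $K$ into Borel $\nu$-continuity sets $B_1,\dots,B_m$ of diameter at most $\eta$, pick $A_i\in B_i$, and set $p_i=\nu(B_i)$; this produces the discrete approximation $\hat\nu=\sum_{i=1}^m p_i\,\delta_{A_i}$. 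Enlarge the $B_i$ to disjoint Borel sets $C_i\supset B_i$ partitioning the $\delta$-neighborhood $K_\delta$ of $K$, with $\diam C_i\le 3\eta$ (achievable once $\delta\le\eta$). For any probability measure $\nu'$ with $\supp\nu'\subset K_\delta$ this yields $\hat\nu':=\sum_{i=1}^m \nu'(C_i)\,\delta_{A_i}$.

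The triangle inequality splits
\[
|\lambda_\pm(\nu)-\lambda_\pm(\nu')|\le |\lambda_\pm(\nu)-\lambda_\pm(\hat\nu)|+|\lambda_\pm(\hat\nu)-\lambda_\pm(\hat\nu')|+|\lambda_\pm(\hat\nu')-\lambda_\pm(\nu')|.
\]
The middle term is controlled by Theorem~\ref{t.finitecase}: by arranging that each $C_i$ has $\nu$-null boundary, weak$^*$ proximity of $\nu'$ to $\nu$ forces $\nu'(C_i)$ arbitrarily close to $\nu(C_i)=\nu(B_i)=p_i$; then Theorem~\ref{t.finitecase}, applied at the point $(A_1,\dots,A_m,p_1,\dots,p_m)$, bounds the middle term by $\vep/3$ uniformly over a suitable weak$^*$-neighborhood $V$ of $\nu$. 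Everything reduces to controlling the \emph{discretization errors} $|\lambda_\pm(\nu)-\lambda_\pm(\hat\nu)|$ and $|\lambda_\pm(\nu')-\lambda_\pm(\hat\nu')|$ as $\eta\to 0$.

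Half of these estimates are routine. By the subadditive ergodic theorem, $\lambda_+(\nu)=\inf_n n^{-1}\int\log\|L^n\|\,d\nu^{\otimes n}$; each finite-$n$ integrand is weak$^*$-continuous on bounded-support measures, so $\lambda_+$ is upper semicontinuous, and dually $\lambda_-$ is lower semicontinuous. Combined with the manifestly weak$^*$-continuous identity $\lambda_+(\nu)+\lambda_-(\nu)=\int\log|\det A|\,d\nu(A)$, the remaining task is the \emph{lower} semicontinuity of $\lambda_+$: showing that $\lambda_+$ cannot drop under the discrete approximation $\hat\nu\to\nu$.

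This lower semicontinuity is the main obstacle. The natural tool is Furstenberg's formula $\lambda_+(\nu)=\int\!\int\log\bigl(\|Av\|/\|v\|\bigr)\,d\nu(A)\,d\mu(v)$ for a $\nu$-stationary probability $\mu$ on $\Proj^1(\CC)$: any weak$^*$-accumulation point of stationary measures of the $\hat\nu$ is $\nu$-stationary, and the integrand converges uniformly by the uniform continuity noted above. The subtle case is \emph{reducible} $\nu$: if $\nu$ preserves a line $L\subset\CC^2$, then $\delta_L$ is a stationary measure whose Furstenberg integral computes $\lambda_-(\nu)$ rather than $\lambda_+(\nu)$, and one must exclude the possibility that stationary measures of $\hat\nu$ concentrate near $\delta_L$. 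I would handle this via a Furstenberg-style dichotomy: (i) if $\nu$ is strongly irreducible and not conjugate into a compact subgroup of $\GL(2,\CC)$, the stationary measure is unique, so accumulation points of stationary measures of $\hat\nu$ are forced to be the right one and continuity is immediate; (ii) if $\nu$ is reducible, $\lambda_+(\nu)$ admits an explicit description in terms of the induced one-dimensional action on the invariant line or its quotient, which is continuous in $\nu$ directly; (iii) if the support lies in a compact subgroup, $\lambda_+=\lambda_-=\tfrac12\int\log|\det A|\,d\nu(A)$, also continuous. Making this trichotomy \emph{quantitative and uniform} under perturbation — so that stationary measures of atomic approximations cannot leak into a wrong regime — is where I expect the bulk of the technical work to lie, presumably via a non-atomicity or dimension-type lower bound on the Furstenberg measures of the $\hat\nu$.
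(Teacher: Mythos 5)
Your reduction is circular in two ways, and the one step you defer is precisely the theorem. First, Theorem~\ref{t.finitecase} is not an independent input: in this paper it is the finite-support special case of Theorem~\ref{t.generalcase} (both are deduced from Theorem~\ref{t.Bernoulli}), so a proof of Theorem~\ref{t.generalcase} that invokes Theorem~\ref{t.finitecase} proves nothing new unless you also supply a proof of the finite case — and the finite case already contains the whole difficulty. Second, even granting Theorem~\ref{t.finitecase}, your third term $|\lambda_\pm(\nu')-\lambda_\pm(\hat\nu')|$ must be bounded \emph{uniformly} over all unknown $\nu'$ in the neighborhood, i.e.\ you need a modulus $\omega(\eta)$ valid for every measure supported in $K_\delta$; such a uniform discretization estimate is essentially equivalent to the continuity statement you are trying to prove, so the triangle-inequality scheme does not reduce the problem.

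The concrete mathematical gap is in your case (ii). When $\nu$ is reducible (say simultaneously diagonalizable) with $\lambda_+(\nu)>\lambda_-(\nu)$, the "explicit description of $\lambda_+(\nu)$" for $\nu$ itself is irrelevant: the perturbed measures $\nu'$ and the atomic approximations $\hat\nu,\hat\nu'$ are in general \emph{not} reducible, Furstenberg's formula does not single out a stationary measure for them a priori, and the stationary measures of $\nu$ form a whole segment between $\delta_0$ and $\delta_\infty$ on $\Proj(\CC^2)$, so a limit of stationary measures of $\hat\nu'$ could compute any value in $[\lambda_-(\nu),\lambda_+(\nu)]$. Ruling out concentration near the contracting direction is exactly what fails without the support restriction (Remark~\ref{r.vanishingpi}) and is the heart of the paper: Proposition~\ref{keyprop} and its proof in Sections~\ref{s.keyproof}--\ref{s.estimates} show that stationary measures of nearby irreducible data must give small mass to a neighborhood of the wrong fixed direction. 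Your sketch acknowledges this ("quantitative and uniform\dots is where I expect the bulk of the technical work to lie") but supplies no argument, so the proposal as written does not prove the theorem. For comparison, the paper's actual deduction of Theorem~\ref{t.generalcase} is not by discretization at all: it proves Theorem~\ref{t.Bernoulli} for locally constant cocycles in the topology \eqref{eq.topology2} and then, in Section~\ref{ss.generalcase}, uses Avila's coupling Lemma~\ref{l.pullback} to realize any admissible $\nu'$ as $\sB_*\sqs$ with $(\sB,\sqs)$ uniformly close to the cocycle $(\sA,\sps)$ built over $\supp\nu\times I$, which transfers the cocycle continuity directly to the measures $\nu'$.
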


The situation in Theorem~\ref{t.finitecase} corresponds to the special case when the
measures have finite supports:
$$
\nu = p_1 \delta_{A_1} + \cdots + p_m \delta_{A_m}
\quand
\nu' = p_1' \delta_{A_1'} + \cdots + p_m' \delta_{A_m'}.
$$
Clearly, the support of $\nu'$ is Hausdorff close to the support of $\nu$ if $A_i'$ is close
to $A_i$, $p_i$ for all $i$. In this regard, recall that we assume that all $p_i>0$:
the conclusion of Theorem~\ref{t.finitecase} may fail if this condition is removed,
as we will recall in Remark~\ref{r.vanishingpi}.

\section{Continuity of Lyapunov exponents}

In this section we put the previous results in a broader context and give a convenient
translation of Theorem~\ref{t.generalcase} to the theory of linear cocycles.

\subsection{Linear cocycles}

Let $\pi:\cV\to M$ be a  finite-dimensional (real or complex) vector bundle and
$F:\cV\to\cV$ be a \emph{linear cocycle} over some measurable transformation $f:M\to M$.
By this we mean that $\pi \circ F = f \circ \pi$ and the actions $F_x:\cV_x\to\cV_{f(x)}$
on the fibers are linear isomorphisms.
Take $\cV$ to be endowed with some measurable Riemannian metric, that is, an Hermitian
product on each fiber depending measurably on the base point.
Let $\mu$ be an $f$-invariant probability measure on $M$ such that
$$
\log\|(F_x)^{\pm 1}\| \in L^1(\mu).
$$
It follows from the sub-additive ergodic theorem (Kingman~\cite{Ki68})
that the numbers
$$
\lambda_+(F,x)=\lim_{n\to\infty}\frac 1n \log \|F_x^n\|
\quand
\lambda_-(F,x)=\lim_{n\to\infty}\frac 1n \log \|(F_x^n)^{-1}\|^{-1}
$$
are well-defined $\mu$-almost everywhere.

The theorem of Oseledets~\cite{Ose68} provides a more detailed statement.
Namely, at $\mu$-almost every point $x \in M$, there exist numbers
$$
\hat\lambda_1(F,x) > \cdots > \hat\lambda_{k(x)}(F,x)
$$
and a filtration
\begin{equation}\label{eq.filtration}
\cV_x = V_x^1 > V_x^2 > \cdots > V_x^{k(x)}> \{0\} = V_x^{k(x)+1}
\end{equation}
such that $F_x(V_x^j)=V_{f(x)}^j$ and
$$
\lim_{n\to\infty} \frac 1n \log\|F_x^n(v)\| = \hat\lambda_j(F,x)
\quad\text{for all $v \in V_x^j \setminus V_x^{j+1}$.}
$$
When $f$ is invertible one can say more: there exists a splitting
\begin{equation}\label{eq.splitting}
\cV_x = E_x^1 \oplus E_x^2 \oplus \cdots \oplus E_x^{k(x)}
\end{equation}
such that $F_x(E_x^j)=E_{f(x)}^j$ and
$$
\lim_{n\to\pm \infty} \frac 1n \log\|F_x^n(v)\| = \hat\lambda_j(F,x)
\quad\text{for all $v \in E_x^j \setminus\{0\}$.}
$$
The number $k(x)\ge 1$ and the \emph{Lyapunov exponents} $\hat\lambda_j(F,\cdot)$
are measurable functions of the point $x$, with
$$
\hat\lambda_1(F,x)=\lambda_+(F,x) \quand \hat\lambda_{k(x)}(F,x) = \lambda_-(F,x),
$$
and they are constant on the orbits of $f$. In particular, they are
constant $\mu$-almost everywhere if $\mu$ is ergodic.

\subsection{Continuity problem}

Next, let $\lambda_1(F,x) \ge \cdots \ge \lambda_d(F,x)$ be the list of all Lyapunov
exponents, where each is counted according to its
multiplicity $m_j(x)=\dim V_x^j - \dim V_x^{j+1}$ ($= \dim E_x^j$ in the
invertible case). Of course, $d = $ dimension of $\cV$.
The \emph{average Lyapunov exponents} of $F$ are defined by
$$
\lambda_i(F,\mu)=\int \lambda_i(F,\cdot)\,d\mu,
\quad \text{for $i=1, \dots, d$.}
$$
The results in this paper are motivated by the following basic question:

\begin{problem}
What are the continuity points of
$$
(F,\mu) \mapsto (\lambda_1(F,\mu), \dots, \lambda_d(F,\mu))~?
$$
\end{problem}

It is well known that the sum of the $k$ largest Lyapunov exponents
\begin{equation}\label{eq.inf1}
(F,\mu) \mapsto \lambda_1(F,\mu)+\cdots + \lambda_k(F,\mu)
\end{equation}
(any $1 \le k < d$) is upper semi-continuous, relative to the $L^\infty$-norm in the space of cocycles
and the pointwise topology in the space of probabilities (the smallest topology that makes
$\mu\mapsto\int\psi\,d\mu$ continuous for every bounded measurable function $\psi$).
Indeed, this is an easy consequence of the identity
\begin{equation}\label{eq.inf2}
\lambda_1(F,\mu)+\cdots + \lambda_k(F,\mu) = \inf_{n\ge1} \frac 1n \int \log\|\Lambda^k(F^n_x)\|\,d\mu(x)
\end{equation}
where $\Lambda^k$ denotes the $k$th exterior power. Similarly, the sum of the $k$ smallest
Lyapunov exponents is always lower semi-continuous.
However, Lyapunov exponents are, usually, \emph{discontinuous} functions of the data.
A number of results, both positive and negative, will be recalled in a while.

\subsection{Continuity theorem}

Let $\cX$ be a polish space, that is, a separable completely metrizable topological space.
Let $\sps$ be a probability measure on $\cX$
and $\sA:\cX\to\GL(2,\C)$ be a measurable function such that
\begin{equation}\label{eq.bounded}
\log\|A^{\pm 1}\| \quad\text{are bounded.}
\end{equation}
Let $f:M\to M$ be the shift map on $M$ and $\mu=\sps^\Z$. Consider the linear cocycle
$$
F:M \times \C^2 \to M \times \C^2, \quad
F(\bx,v) = (f(\bx), A_{\sx_0}(v)),
$$
where $x_0\in\cX$ denotes the zeroth coordinate of $\bx\in M$.
In the spaces of cocycles and probability measures on $\cX$ we consider the distances defined by,
respectively,
\begin{equation}\label{eq.distances}
d(\sA,\sB)  = \sup_{x\in \cX} \|A_\sx-B_\sx\| \qquad
d(\sps,\sqs)   
= \sup_{|\phi|\le 1} |\int \phi\,d(\sps - \sqs)|
\end{equation}
where the second $\sup$ is over all measurable functions $\phi:\cX\to\R$ with
$\sup|\phi|\le 1$.
In the space of pairs $(\sA,\sps)$ we consider the topology
determined by the bases of neighborhoods
\begin{equation}\label{eq.topology2}
V(\sA,\sps,\gamma,\cZ)=\{(\sB,\sqs): d(\sA,\sB)<\gamma,\ \sqs(\cZ)=1, \ d(\sps,\sqs)<\gamma\}
\end{equation}
where $\gamma>0$ and  $\cZ$ is any measurable subset of $\cX$ with $\sps(\cZ)=1$.

\begin{maintheorem}\label{t.Bernoulli}
The extremal Lyapunov exponents $\lambda_\pm(\sA,\sps)=\lambda_\pm(F,\mu)$ depend continuously on
$(\sA,\sps)$ at all points.
\end{maintheorem}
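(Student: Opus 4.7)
The plan is to reduce continuity of $\lambda_-$ to that of $\lambda_+$, get upper semi-continuity of $\lambda_+$ for free from \eqref{eq.inf2}, and then attack lower semi-continuity of $\lambda_+$ through Furstenberg's variational formula on projective space; the main obstacle is preventing stationary measures of perturbed cocycles from collapsing onto the Oseledets stable direction of the limit cocycle.

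\emph{Reduction and easy half.} Since $\lambda_+(\sA,\sps)+\lambda_-(\sA,\sps)=\int\log|\det \sA_x|\,d\sps(x)$, I first observe that this sum is continuous in the topology \eqref{eq.topology2}: for $(\sB,\sqs)\in V(\sA,\sps,\gamma,\cZ)$,
$$\Bigl|\int\log|\det \sB|\,d\sqs-\int\log|\det \sA|\,d\sps\Bigr|\le \sup_{x\in\cZ}\bigl|\log|\det \sB_x|-\log|\det \sA_x|\bigr| + \bigl\|\log|\det \sA|\bigr\|_\infty\,d(\sps,\sqs),$$
and both terms are $O(\gamma)$ thanks to \eqref{eq.bounded}. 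So continuity of $\lambda_+$ implies continuity of $\lambda_-$. Upper semi-continuity of $\lambda_+$ is immediate from \eqref{eq.inf2} with $k=1$: each $\frac1n\int\log\|\sA_{x_{n-1}}\cdots \sA_{x_0}\|\,d\sps^n$ is continuous (by an estimate analogous to the one above), and a pointwise infimum of continuous functions is upper semi-continuous. By subtraction, $\lambda_-$ is automatically lower semi-continuous.

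\emph{Lower semi-continuity of $\lambda_+$.} Argue by contradiction: assume $(\sA_k,\sps_k)\to(\sA,\sps)$ and $\lambda_+(\sA_k,\sps_k)\to\ell<\lambda_+(\sA,\sps)$. For each $k$ pick an $(\sA_k,\sps_k)$-stationary probability $\eta_k$ on $\Proj(\CC^2)$ realizing the top exponent through the Furstenberg identity
$$\lambda_+(\sA_k,\sps_k)=\int\log\frac{\|\sA_k(x)v\|}{\|v\|}\,d\sps_k(x)\,d\eta_k([v]).$$
By compactness of $\Proj(\CC^2)$, a subsequence of $\eta_k$ converges weak$^*$ to some $\eta$. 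One verifies that $\eta$ is $(\sA,\sps)$-stationary and that the integrals pass to the limit, giving $\int\log(\|\sA(x)v\|/\|v\|)\,d\sps\,d\eta=\ell$. By the Furstenberg--Kifer theorem, this integral is a convex combination of $\lambda_+(\sA,\sps)$ and $\lambda_-(\sA,\sps)$, so under the assumption $\ell<\lambda_+(\sA,\sps)$ the limit measure $\eta$ must charge the Oseledets stable direction of $(\sA,\sps)$, which forces $\lambda_+(\sA,\sps)>\lambda_-(\sA,\sps)$.

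\emph{The main obstacle.} The substantive task is to rule out this last scenario: to show that the unstable stationary measures $\eta_k$ of the perturbations cannot concentrate onto the stable direction of the limit cocycle. I would proceed quantitatively, exploiting the hyperbolicity at $(\sA,\sps)$: pick a small neighborhood $U$ of the stable direction in $\Proj(\CC^2)$; iteration of $\sA$ on $\Proj(\CC^2)$ expands points outside $U$ at a rate comparable to $e^{\lambda_+(\sA,\sps)-\lambda_-(\sA,\sps)}$, and combining this expansion with the stationarity relation $\eta_k=\int(\sA_k(x))_*\eta_k\,d\sps_k(x)$ and uniform distortion bounds on $\Proj(\CC^2)$, one should extract a lower bound on $\eta_k(\Proj(\CC^2)\setminus U)$ that persists in the weak$^*$ limit and contradicts concentration of $\eta$ near the stable direction. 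The complementary case $\lambda_+(\sA,\sps)=\lambda_-(\sA,\sps)$ is handled for free: upper semi-continuity of $\lambda_+$ and lower semi-continuity of $\lambda_-$ together with $\lambda_+\ge\lambda_-$ squeeze both exponents to the common value. The delicate step I expect to be hardest is precisely this no-collapse estimate for stationary measures on projective space under weak$^*$ perturbation of the data.
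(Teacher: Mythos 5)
Your reduction to $\lambda_+$, the upper semi-continuity via \eqref{eq.inf2}, and the compactness argument producing a limit stationary measure $\eta$ with $\int\phi_\sA\,d\eta\,d\sps=\ell<\lambda_+(\sA,\sps)$ all run parallel to the first half of the paper (Proposition~\ref{p.naodiagonal}); that part is sound. The genuine gap is the step you yourself flag as ``the main obstacle'': your proposed no-collapse mechanism does not work, and in the form you state it it cannot work. First, there is no uniform expansion of the projective dynamics of $\sA$ outside a neighborhood $U$ of the stable direction: in the problematic (diagonal) case the matrices $A_x$ with $|\theta_x|<1$ push points \emph{toward} the stable direction, and only the averaged drift is expanding, so a pointwise ``expansion plus distortion'' estimate has nothing to bite on; one is forced into a genuinely probabilistic analysis of the random walk on $\Proj(\CC^2)$. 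Second, and more decisively, any lower bound on $\eta_k(\Proj(\CC^2)\setminus U)$ derived only from weak$^*$ closeness of $(\sA_k,\sps_k)$ to $(\sA,\sps)$ would prove a false statement: in Kifer's example (Remark~\ref{r.vanishingpi}), take $\cX=\{1,2\}$, $\sA$ fixed, $\sps=(1,0)$ and $\sps_k=(1-1/k,1/k)$; then $d(\sA,\sA)=0$, $d(\sps,\sps_k)\to 0$ in total variation, the limit cocycle restricted to $\supp\sps$ is a single hyperbolic matrix (so your expansion hypothesis holds in the strongest possible form), yet the stationary measures of $(\sA,\sps_k)$ realizing the top exponent keep mass $1/2$ on the vertical direction and $\lambda_+(\sA,\sps_k)=0\not\to\log\sigma$. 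So collapse onto the stable direction is a real phenomenon, excluded only by the extra requirement $\sqs(\cZ)=1$ built into the topology \eqref{eq.topology2}, and your sketch never uses that requirement.

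This is exactly where the paper's work lies. After reducing to the a.e.\ diagonal case, it splits the perturbation $(\sB,\sqs)$ according to whether the $B_x$ admit a common invariant line (handled directly, since such a line must be near the horizontal or vertical axis) or not, and in the irreducible case proves the quantitative key Proposition~\ref{keyprop}: every $(\sB,\sqs)$-stationary measure gives mass at most $\delta$ to the complement of a neighborhood of the expanding direction, provided $(\sB,\sqs)\in V(\sA,\sps,\gamma,\cZ)$. Its proof occupies Sections~\ref{s.keyproof}--\ref{s.estimates} and rests on comparison diagonal cocycles, the centered/targeted radii bookkeeping, the averaged counting inequality of Lemma~\ref{l.sete}, and the M\"obius deformation Lemmas~\ref{l.aproximacao}--\ref{l.aproximacao2} -- none of which is a routine distortion estimate. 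To repair your argument you would need, at minimum, to invoke the support condition $\sqs(\cZ)=1$ so that the perturbed matrices are uniformly close to the diagonal ones on the relevant support, to separate the reducible perturbations from the irreducible ones, and then to replace the uniform-expansion heuristic by a drift/averaging argument for the stationary measure of the perturbed random walk; as written, the central step is missing.
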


We prove Theorem~\ref{t.Bernoulli} in Sections~\ref{s.proof1} and~\ref{s.keyproof}, and
we deduce Theorem~\ref{t.generalcase} from it in Section~\ref{s.consequences}.
Theorem~\ref{t.Bernoulli} can also be deduced from Theorem~\ref{t.generalcase}:
if $d(\sA,\sB)$ and $d(\sps,\sqs)$ are small then $\nu'=\sB_*\sqs$ is close to $\nu=\sA_*\sps$
in the weak$^*$ topology, and the support of $\nu'$ is contained in a small neighborhood of
the support of $\nu$; moreover,
$\lambda_\pm(\sA,\sps)=\lambda_\pm(\nu)$ and $\lambda_\pm(\sB,\sqs)=\lambda_\pm(\nu')$.
In this way one even gets a more general version of Theorem~\ref{t.Bernoulli},
where $\cX$ can be any measurable space.

Our arguments also show that the Oseledets decomposition depends continuously on the cocycle
in measure. Given $B:\cX\to\GL(2,\CC)$, let $E^s_{B,\bx}$ and $E^u_{B,\bx}$ be the Oseledets
subspaces of the corresponding cocycle at a point $\bx\in M$ (when they exist).

\begin{maintheorem}\label{t.convergenciaemmedida}
Suppose $\lambda_-(\sA,\sps)<\lambda_+(\sA,\sps)$. For any sequence $\sA^k:\cX\to\GL(2,\CC)$
such that $d(A^k,A)\to 0$, and for any $\vep>0$, we have
$$
\mu\big(\{x\in M: \angle(E_{\sA,x}^u,E_{\sA^k,x}^{u})<\vep\text{ and }\angle(E_{\sA,x}^s,E_{\sA^k,x}^{s})<\vep \}\big)
\to 1.
$$
\end{maintheorem}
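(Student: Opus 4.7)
The plan is to combine Theorem~\ref{t.Bernoulli} with the characterization of the Oseledets directions as limits of finite-time singular directions, together with the fact that the latter depend continuously on the matrix wherever the two singular values are well separated.

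For bounded $\sB:\cX\to\GL(2,\CC)$ and $n\ge 1$, write the $n$-step backward product as $\sB^{(n)}_\bx := B_{x_{-1}}B_{x_{-2}}\cdots B_{x_{-n}}$ and let $u_n(\sB,\bx)\in\Proj(\CC^2)$ be the image of its top right singular vector, well defined whenever $\sigma_1(\sB^{(n)}_\bx)>\sigma_2(\sB^{(n)}_\bx)$. By the Oseledets theorem applied to the invertible cocycle generated by $\sA$, which has simple spectrum by hypothesis, $\sigma_1/\sigma_2$ of $\sA^{(n)}_\bx$ grows exponentially and $u_n(\sA,\bx)\to E^u_{\sA,\bx}$ as $n\to\infty$ for $\mu$-a.e.\ $\bx$. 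By Theorem~\ref{t.Bernoulli}, $\lambda_+(\sA^k,\sps)-\lambda_-(\sA^k,\sps)$ converges to the strictly positive number $\lambda_+(\sA,\sps)-\lambda_-(\sA,\sps)$, so for all large $k$ the cocycle $\sA^k$ also has simple spectrum and the same characterization applies to $E^u_{\sA^k,\bx}$.

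Fix $\vep>0$. By Egorov's theorem there exist $N\in\N$, a large constant $K$, and a measurable set $\Omega\subset M$ with $\mu(\Omega)>1-\vep/4$ such that, for every $\bx\in\Omega$,
$$
\angle(u_N(\sA,\bx),E^u_{\sA,\bx})<\vep/4
\quand
\sigma_1(\sA^{(N)}_\bx)/\sigma_2(\sA^{(N)}_\bx)>K.
$$
Because $d(\sA^k,\sA)\to 0$ and all cocycles involved are uniformly bounded by~\eqref{eq.bounded}, the products $(\sA^k)^{(N)}_\bx$ converge in operator norm to $\sA^{(N)}_\bx$ uniformly in $\bx$. Since the top singular direction is a continuous function on the open subset of $\GL(2,\CC)$ where the singular values are distinct, for all $k$ large enough $\angle(u_N(\sA^k,\bx),u_N(\sA,\bx))<\vep/4$ on $\Omega$, giving $\angle(u_N(\sA^k,\bx),E^u_{\sA,\bx})<\vep/2$ on $\Omega$.

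To close the argument we must compare $u_N(\sA^k,\bx)$ with $E^u_{\sA^k,\bx}$ itself, and here lies the main difficulty: the natural Oseledets-type estimate depends on the cocycle, but one needs uniformity in $k$. The approach is quantitative: a direct singular-value-decomposition calculation yields $\angle(u_{n+1}(\sB,\bx),u_n(\sB,\bx))\le C(\sigma_2/\sigma_1)(\sB^{(n+1)}_\bx)$, so $\angle(u_N(\sB,\bx),E^u_{\sB,\bx})$ is controlled by the tail sum $\sum_{n\ge N}(\sigma_2/\sigma_1)(\sB^{(n)}_\bx)$. Applying Kingman's subadditive ergodic theorem to both $\frac1n\log\sigma_1(\sB^{(n)}_\bx)$ and $\frac1n\log\sigma_2(\sB^{(n)}_\bx)$, and invoking Theorem~\ref{t.Bernoulli} to ensure that the corresponding spatial averages converge to $\lambda_\pm(\sA,\sps)$ as $k\to\infty$, one can enlarge $N$ and slightly shrink $\Omega$ so that this tail sum is at most $\vep/2$ on a set of measure at least $1-\vep/2$, uniformly for $k$ large. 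Combining this with the previous step gives $\angle(E^u_{\sA^k,\bx},E^u_{\sA,\bx})<\vep$ on a set of measure at least $1-\vep$; the completely analogous argument using the forward product $A_{x_{n-1}}\cdots A_{x_0}$ and its bottom right singular vector handles $E^s$, proving the theorem.
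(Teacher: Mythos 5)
Your first two steps are sound: on an Egorov set the finite-time most-expanded directions $u_N(\sA,\bx)$ approximate $E^u_{\sA,\bx}$, and uniform convergence of the time-$N$ products gives $\angle\big(u_N(\sA^k,\bx),u_N(\sA,\bx)\big)<\vep/4$ there for large $k$. The gap is exactly at the step you yourself flag as the main difficulty and then dispatch in one sentence: the claim that, ``invoking Theorem~\ref{t.Bernoulli}, one can enlarge $N$ and slightly shrink $\Omega$ so that the tail sum $\sum_{n\ge N}(\sigma_2/\sigma_1)\big((\sA^k)^{(n)}_\bx\big)$ is at most $\vep/2$ on a set of measure $1-\vep/2$, \emph{uniformly for $k$ large}.'' Kingman's theorem~\cite{Ki68} gives, for each fixed $k$, almost everywhere convergence of $\frac1n\log\sigma_i\big((\sA^k)^{(n)}_\bx\big)$ to $\lambda_\pm(\sA^k,\sps)$, but with no rate and no uniformity in $k$; Theorem~\ref{t.Bernoulli} only says that the limiting \emph{numbers} $\lambda_\pm(\sA^k,\sps)$ converge, and says nothing about how long one must wait before the exponential decay of $(\sigma_2/\sigma_1)\big((\sA^k)^{(n)}_\bx\big)$ sets in, nor about the measure of the set of $\bx$ where it sets in late. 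The cheap substitutes do not close this: semicontinuity pins down the \emph{means} $\frac1n\int\log\sigma_1\big((\sA^k)^{(n)}\big)\,d\mu$ and $\frac1n\int\log\sigma_2\big((\sA^k)^{(n)}\big)\,d\mu$ near $\lambda_\pm(\sA,\sps)$ for a fixed large $n$ and large $k$, but since the only pointwise bounds available are the crude ones from \eqref{eq.bounded}, a Chebyshev-type argument yields a finite-time singular-value gap only on a set of \emph{definite} measure, not of measure tending to $1$; and even a fixed-time statement in measure would not give the tail sum over all $n\ge N$ simultaneously, which is what your comparison of $u_N(\sA^k,\bx)$ with $E^u_{\sA^k,\bx}$ requires. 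In short, the asserted ``uniform Kingman'' estimate across the family $(\sA^k)_k$ is essentially the hard content of the theorem, and nothing in your argument proves it.

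For contrast, the paper avoids any quantitative control of convergence rates by a soft measure-theoretic argument: the measures $m^u_k$ supported on the graphs $\bx\mapsto E^u_{\sA^k,\bx}$ are invariant $u$-states realizing $\lambda_+(\sA^k,\sps)$; by Prohorov compactness (Lemma~\ref{l.compactprohorov}), continuity of the push-forward (Lemma~\ref{l.imagensconvergem}), continuity of the exponents (Theorem~\ref{t.Bernoulli}) and the uniqueness of the $u$-state realizing $\lambda_+$ (Remark~\ref{r.uniqueustaterealizing}, via Lemma~\ref{l.combinacaolinear}), every weak$^*$ limit of $(m^u_k)_k$ must be $m^u$ (Corollary~\ref{c.uestadosconvergem}); a Lusin argument then converts weak$^*$ convergence of these graph measures into convergence in measure of $\bx\mapsto E^u_{\sA^k,\bx}$. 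If you want to salvage your finite-time approach, you would need to prove a uniform-in-$k$ large-deviation or Egorov-type estimate for the subadditive sequences of the perturbed cocycles, which is not supplied by the ingredients you invoke.
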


A few words are in order on our choice of the topology \eqref{eq.topology2}. As we are going to see,
the proof of Theorem~\ref{t.Bernoulli} splits into two cases, depending on whether the cocycle is
almost irreducible (Section~\ref{ss.proofnaodiagonal}) or diagonal (Section~\ref{ss.proofdiagonal}).
In the irreducible case, continuity of the Lyapunov exponents was known before (\cite{FK83,Hen84},
see also~\cite{AV3}) and only requires the weak$^*$ topology. In a nutshell, this is because in the
irreducible case
\begin{equation}\label{eq.furstenbergformula}
\lambda_+(\sA,\sps)= \int \log\frac{\|\sA(\bx)(v)\|}{\|v\|}\,d\mu(\bx)\,d\eta(v)
\end{equation}
for \emph{every} stationary measure $\eta$ (Furstenberg's formula); then one only has to note that
the set of stationary measures varies semi-continuously with the data.
The main point in the proof Theorem~\ref{t.Bernoulli} is to handle the diagonal case,
where \eqref{eq.furstenbergformula} breaks down. That is where we need the full strength of
\eqref{eq.topology2}.

Restricted to the space of pairs $(A,\sps)$ where $A$ is continuous (and bounded), it suffices to
consider the neater bases of neighborhoods
\begin{equation}\label{eq.topology1}
V(\sA,\sps,\vep)=\{(\sB,\sqs): d(\sA,\sB)<\vep,\ \supp\sqs\subset \supp\sps, \ d(\sps,\sqs)<\vep\}.
\end{equation}
However, this will not be used in the present paper.

\subsection{Previous results}

The problem of dependence of Lyapunov exponents on the linear cocycle or the base dynamics
has been addressed by several authors. In a pioneer work, Ruelle~\cite{Ru79} proved real-analytic
dependence of the largest exponent on the cocycle, for linear cocycles admitting an invariant
convex cone field. Short afterwards, Furstenberg, Kifer~\cite{FK83,Ki82b} and Hennion~\cite{Hen84}
proved continuity of the largest exponent of i.i.d. random matrices, under a condition of almost
irreducibility. Some reducible cases were treated by Kifer and Slud~\cite{Ki82b,Ki82a},
who also observed  that discontinuities may occur when the probability vector degenerates
(\cite{Ki82b}, cf. Remark~\ref{r.vanishingpi} below).

For i.i.d. random matrices satisfying strong irreducibility and the contraction property,
Le Page~\cite{LP82,LP89} proved local H\"older continuous, and even smooth, dependence of the
largest exponent on the cocycle; the assumptions ensure that the largest exponent is simple
(multiplicity $1$), by work of Guivarc'h, Raugi~\cite{GR86} and Gol'dsheid, Margulis~\cite{GM89}.
For i.i.d. random matrices over Bernoulli and Markov shifts, Peres~\cite{Pe91} showed that simple
exponents are locally real-analytic functions of the transition data.

A construction of Halperin  quoted by Simon, Taylor~\cite{ST85} shows that for every $\alpha>0$
one can find \emph{random Schr\"odinger cocycles}
$$
\left(\begin{array}{cc} E - V_n & -1 \\ 1 & 0 \end{array}\right)
$$
(the $V_n$ are i.i.d. random variables) near which the exponents fail to be $\alpha$-H\"older
continuous. Thus, the previously mentioned results of Le Page can not be improved.
Johnson~\cite{Jo84} found examples of discontinuous dependence of the exponent on the
energy $E$, for Schr\"odinger cocycles over quasi-periodic flows. Recently, Bourgain,
Jitomirskaya~\cite{Bou05,BJ02b} proved continuous dependence of the exponents on the
energy $E$, for one-dimensional \emph{quasi-periodic} Schr\"odinger cocycles:
$V_n=V(f^n(\theta))$ where $V:S^1\to\RR$ is real-analytic and $f$ is an irrational circle
rotation.

Going back to linear cocycles, the answer to the continuity problem is bound to depend on the
class of cocycles under consideration, including its topology. Knill~\cite{Kni91,Kni92} considered
$L^\infty$ cocycles with values in $\SL(2,\RR)$ and proved that, as long as the base dynamics
is aperiodic, discontinuities always exist: the set of cocycles with non-zero exponents is
never open. This was refined to the continuous case by Bochi~\cite{Boc-un,Boc02}:
an $\SL(2,\RR)$-cocycle is a continuity point in the $C^0$ topology if and only if it is
uniformly hyperbolic or else the exponents vanish. This statement was inspired by Ma\~n\'e's
surprising announcement in~\cite{Man83}. Indeed, and most strikingly, the theorem of
Ma\~n\'e-Bochi~\cite{Boc02,Man83} remains true restricted to the subset of $C^0$ derivative cocycles,
that is, of the form $F=Df$ for some $C^1$ area preserving  diffeomorphism $f$.
Moreover, this has been extended to cocycles and diffeomorphisms in arbitrary dimension,
by Bochi, Viana~\cite{Boc09,BcV05}.
Let us also note that  linear cocycles whose exponents are all equal form an $L^p$-residual
subset, for any $p\in[1,\infty)$, by Arnold, Cong~\cite{AC97}, Arbieto, Bochi~\cite{ArB03}.
Consequently, they are precisely the continuity points for the Lyapunov exponents relative to
the $L^p$ topology.

These results show that discontinuity of Lyapunov exponents is quite common among cocycles with
low regularity. Locally constant cocycles, as we deal with here, sit at the opposite end of the
regularity spectrum, and the results in the present paper show that in this context continuity
does hold at every point. For cocycles with intermediate regularities the continuity problem is
very much open. However, our construction in Section~\ref{ss.discontinuity} shows that for any
$r\in(0,\infty)$ there exist locally constant cocycles over Bernoulli shifts that are points of
discontinuity for the Lyapunov exponents in the space of all $r$-H\"older cocycles. We will return
to this topic in the final section.

Recently, Avila, Viana~\cite{AV3} studied the continuity of the Lyapunov exponents in the very broad
context of \emph{smooth} cocycles. The continuity criterium in~\cite[Section~5]{AV3} was the starting
point for the proof of our Theorem~\ref{t.Bernoulli}.

\medskip

This paper is organized as follows.
In Section~\ref{s.proof1} we reduce Theorem~\ref{t.Bernoulli} to a key result on stationary
measures of nearby cocycles. The latter is proved in Sections~\ref{s.keyproof} and \ref{s.estimates}.
In Section~\ref{s.consequences} we deduce Theorems~\ref{t.generalcase}
and~\ref{t.convergenciaemmedida}.
Finally, in Section~\ref{s.final} we describe an example of discontinuity of Lyapunov
exponents for H\"older cocycles, and we close with a short list of open problems and
conjectures.

\subsubsection*{Acknowledgements} We are grateful to Artur Avila, Jairo Bochi, and Jiagang Yang
for several useful conversations. Lemma~\ref{l.pullback} is due to Artur Avila.

\section{Proof of Theorem~\ref{t.Bernoulli}}\label{s.proof1}

We start with a simple observation. Let $\cP(\cX)$ be the space of probability measures
on $\cX$ and let $\cG(\cX)$ and $\cS(\cX)$ denote the spaces of bounded measurable
functions from $\cX$ to $\GL(2,\CC)$ and $\SL(2,\CC)$, respectively.
Given any $\sA\in \cG(X)$ let $\sB\in\cS(\cX)$
and $\mC:\cX\to\CC$ be such that $A_x=c_x B_x$ for every $x\in\cX$.
Although $c_x=(\det A_x)^{1/2}$ and $B_x$ are determined up to sign only, choices can be
made consistently in a neighborhood, so that $\sB$ and $\mC$ depend continuously on $\sA$.
It is also easy to see that the Lyapunov exponents are related by
$$
\lambda_\pm(\sA,\sps)
 = \lambda_\pm(\sB,\sps) + \int \log |c_x|\,d\sps(x)
$$
Thus, since the last term depends continuously on $(\sA,\sps)$ relative to the topology
defined by \eqref{eq.topology2}, continuity of the Lyapunov exponents on
$\cS(\cX)\times \cP(\cX)$ yields continuity on the whole $\cG(X)\times \cP(\cX)$.
So, we may suppose from the start that $\sA\in\cS(\cX)$.
Observe also that in this case one has
$$
\lambda_+(\sA,\sps)+\lambda_-(\sA,\sps)=0.
$$

From here on the proof has two main steps. First, we reduce the problem to the case when
the matrices are simultaneously diagonalizable:

\begin{proposition}\label{p.naodiagonal}
If $(\sA,\sps)\in\cS(X)\times\cP(\cX)$ is a point of discontinuity for $\lambda_+$ then
there is $P\in\SL(2,\CC)$ and $\theta:\cX\to\CC\setminus\{0\}$ such that
$$
P A_x P^{-1} =\left(\begin{array}{cc} \theta_x & 0 \\ 0 & \theta_x^{-1} \end{array}\right)
$$
for all $x\in\cZ$, where $\cZ\subset\cX$ is a full $\sps$-measure set.
In particular, $A_xA_y=A_yA_x$ for all $x, y \in \cZ$.
\end{proposition}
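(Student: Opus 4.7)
The plan is to prove the contrapositive: if no $P\in\SL(2,\CC)$ and no $\sps$-full measure set $\cZ$ simultaneously diagonalize the matrices $A_x$ for $x\in\cZ$, then $\lambda_+$ is continuous at $(\sA,\sps)$. This is essentially the classical Furstenberg--Kifer--Hennion continuity theorem, adapted to the topology~\eqref{eq.topology2}; the residual ``diagonal'' case that remains after this dichotomy is the harder one, to be handled separately later in the paper.

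The central tool is the theory of $(\sA,\sps)$-stationary measures on $\Proj^1(\CC)$: probabilities $\eta$ with $\eta=\int(A_x)_*\eta\,d\sps(x)$. Such $\eta$ exist by Kakutani--Markov, and I would first establish their semi-continuity in the data: if $(\sA^k,\sps^k)\to(\sA,\sps)$ in the topology~\eqref{eq.topology2} and $\eta^k$ is $(\sA^k,\sps^k)$-stationary, then every weak-$*$ accumulation point of $(\eta^k)$ is $(\sA,\sps)$-stationary. This follows from a direct estimate using the dual definition of $d(\sps,\sqs)$ in~\eqref{eq.distances}, which controls integrals of \emph{bounded measurable} observables (not just continuous ones) --- precisely what is needed since $\sA$ is only measurable.

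Next I would case-split on the minimal common invariant finite set $L\subset\Proj^1(\CC)$ satisfying $A_x L=L$ for $\sps$-a.e.\ $x$. For $2\times 2$ matrices in $\SL(2,\CC)$ the possibilities are: no such $L$ exists; $L$ is a single line; $L$ is a pair of lines not individually fixed; or all $A_x$ are scalar (in which case diagonalizability is immediate). In the first case, every stationary $\eta$ is diffuse and Furstenberg's identity
\[
\lambda_+(\sA,\sps)=\int\log\frac{\|A_x v\|}{\|v\|}\,d\sps(x)\,d\eta(v)
\]
holds; joint continuity of the right-hand side in $(\sA,\sps,\eta)$, combined with the semi-continuity of $\eta$, yields continuity of $\lambda_+$. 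In the single-line case, a conjugation puts each $A_x$ in upper triangular form $A_x=\bigl(\begin{smallmatrix}\theta_x & \tau_x \\ 0 & \theta_x^{-1}\end{smallmatrix}\bigr)$; a direct analysis of the recursion satisfied by the products $A_{n-1}\cdots A_0$ yields $\lambda_+(\sA,\sps)=\bigl|\int\log|\theta_x|\,d\sps(x)\bigr|$, which is plainly continuous in $(\sA,\sps)$. In the swapped-pair case, each $A_x$ is either diagonal or anti-diagonal; passing to the accelerated cocycle on $\cX\times\cX$ with measure $\sps\times\sps$ reduces to invariant structures of one or two \emph{individually fixed} lines, i.e.\ to the earlier cases (invoking the contrapositive hypothesis to exclude the still-diagonal subcase).

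The main anticipated obstacle is this last reduction: one must verify that the doubling construction pulls neighborhoods in the topology~\eqref{eq.topology2} back to neighborhoods of the same type, so that continuity of $\lambda_+$ at the accelerated cocycle transfers to continuity at $(\sA,\sps)$. A subsidiary technical point is the triangular case when $\int\log|\theta_x|\,d\sps=0$, where the claim $\lambda_+=0$ requires combining the $\SL(2,\CC)$ constraint $\lambda_++\lambda_-=0$, the trivial bound $\lambda_+\geq 0$, and the upper semi-continuity recalled in the introduction to pin down the value of $\lambda_+$ on a neighborhood.
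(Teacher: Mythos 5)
Your contrapositive strategy stalls exactly where it is hardest: the single-invariant-line case. There you argue that, after conjugation, each $A_x$ is upper triangular with diagonal $(\theta_x,\theta_x^{-1})$ and $\lambda_+(\sA,\sps)=\bigl|\int\log|\theta_x|\,d\sps\bigr|$, ``which is plainly continuous''. But that formula only computes the restriction of $\lambda_+$ to the triangular family; continuity at $(\sA,\sps)$ must be tested against arbitrary nearby pairs $(\sB,\sqs)$ in the topology~\eqref{eq.topology2}, which preserve no line, and the formula says nothing about $\lambda_+(\sB,\sqs)$. (The same reasoning applied to the diagonal family would ``prove'' Proposition~\ref{p.diagonal}, which is the whole difficulty of the paper and occupies Sections~\ref{s.keyproof}--\ref{s.estimates}.) Worse, when $\int\log|\theta_x|\,d\sps<0$ the invariant line carries the exponent $\lambda_-<\lambda_+$, so the Dirac mass on it is an $(\sA,\sps)$-stationary measure whose Furstenberg integral equals $\lambda_-$: this is precisely the reducible, non-quasi-irreducible situation \emph{not} covered by Furstenberg--Kifer/Hennion, and your semicontinuity of stationary measures cannot by itself exclude that the limits of the maximizing stationary measures of $(\sA^k,\sps^k)$ charge that line. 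The paper closes this hole with structure you do not have: stationary measures realizing $\lambda_+(\sA^k,\sps^k)$ lift to invariant $u$-states (Lemma~\ref{l.uestadoseestacionaria}); $m^u$ is the \emph{unique} invariant $u$-state realizing $\lambda_+(\sA,\sps)$ (Lemma~\ref{l.combinacaolinear}, Remark~\ref{r.uniqueustaterealizing}); a different limit forces $m^s$ to be a $u$-state, hence $E^s$ constant, and the symmetric $s$-state argument applied to $\lambda_-=-\lambda_+$ forces $E^u$ constant as well --- which fails when only one line is invariant. You offer no substitute for this step.

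The swapped-pair case is also not closed as written. After doubling, the accelerated cocycle preserves each of the two lines individually, i.e.\ it \emph{is} simultaneously diagonalizable; your contrapositive hypothesis concerns the original cocycle only, so it does not exclude this subcase, and invoking it there is circular: you would need continuity in the diagonal case, which is exactly what the remainder of the paper establishes and is unavailable at this stage. (The case can in fact be dispatched cheaply: if the set of $x$ for which $A_x$ genuinely swaps the two lines has positive $\sps$-measure, the forward exponent along either line averages to zero, so $\lambda_\pm(\sA,\sps)=0$ and continuity is automatic from $\lambda_+\ge0$ plus upper semicontinuity --- but you do not say this; your enumeration also omits minimal invariant finite sets of cardinality at least three.) For contrast, the paper's proof avoids any case analysis: a discontinuity produces, via the limit of maximizing stationary measures and the martingale lift to $u$-states, an invariant $u$-state different from $m^u$; Lemmas~\ref{l.combinacaolinear} and~\ref{l.suestados} then make $E^s$ constant, the symmetric argument makes $E^u$ constant, and these two invariant directions give exactly the simultaneous diagonalization in the statement.
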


Then we rule out the diagonal case as well:

\begin{proposition}\label{p.diagonal}
Let $(\sA,\sps)\in\cS(X)\times\cP(\cX)$ be such that $\sA$ is as in the conclusion
of Proposition~\ref{p.naodiagonal}. Then $(\sA,\sps)$ is a point of continuity for $\lambda_+$.
\end{proposition}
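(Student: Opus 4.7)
The plan is to reduce Proposition~\ref{p.diagonal} to a scalar random-walk computation via diagonalization, and then use the key result on stationary measures from Section~\ref{s.keyproof} to pin down the unstable Oseledets direction of the perturbed cocycles.

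First I would apply the reduction at the start of Section~\ref{s.proof1} to assume $\sA\in\cS(\cX)$, and then conjugate by the matrix $P$ supplied by Proposition~\ref{p.naodiagonal}---which is harmless, since $\lambda_\pm$ is conjugation-invariant and conjugation is a homeomorphism for the topology \eqref{eq.topology2}---to put $A_x=\operatorname{diag}(\theta_x,\theta_x^{-1})$ for every $x$ in the full $\sps$-measure set $\cZ$. The product $A^n_\bx$ is then diagonal for $\mu$-a.e.\ $\bx$, so the Birkhoff ergodic theorem yields $\lambda_+(\sA,\sps)=|\alpha|$ with $\alpha:=\int_\cZ\log|\theta_x|\,d\sps(x)$, and after replacing $\sA$ by $\sA^{-1}$ if needed we may assume $\alpha\ge 0$. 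Upper semi-continuity of $\lambda_+$ from \eqref{eq.inf2} always gives $\limsup_{(\sB,\sqs)\to(\sA,\sps)}\lambda_+(\sB,\sqs)\le\alpha$; combined with $\lambda_+\ge 0$ on $\cS(\cX)$, this already settles the degenerate case $\alpha=0$. When $\alpha>0$, the Oseledets decomposition of $\sA$ is $E^u_\sA=[e_1]$, $E^s_\sA=[e_2]$ at $\mu$-a.e.\ point.

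The remaining task is the lower bound $\liminf\lambda_+(\sB_n,\sqs_n)\ge \alpha$ along an arbitrary sequence $(\sB_n,\sqs_n)\to(\sA,\sps)$ in the topology \eqref{eq.topology2}. For each $n$ with $\lambda_+(\sB_n,\sqs_n)>0$ I would select the $\sqs_n$-stationary measure $\eta_n$ on $\mathbb{P}^1$ obtained from the $F$-invariant measure $\mu_n\otimes\delta_{E^u_{\sB_n,\bx}}$; the Furstenberg integral formula then yields
$$
\lambda_+(\sB_n,\sqs_n)=\int\int\log\frac{\|(\sB_n)_x v\|}{\|v\|}\,d\sqs_n(x)\,d\eta_n(v).
$$
I would next invoke the key result from Section~\ref{s.keyproof} to show that any weak-$*$ accumulation point $\eta_\infty$ of $\{\eta_n\}$ is $\sps$-stationary for $\sA$ and, crucially, places full mass on $[e_1]$ rather than any on $[e_2]$. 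Passing to the limit in the displayed identity, using $d(\sB_n,\sA)\to 0$ uniformly on $\cZ$ and $d(\sps,\sqs_n)\to 0$ in total variation, then gives $\lim\lambda_+(\sB_n,\sqs_n)=\int_\cZ\log|\theta_x|\,d\sps=\alpha$, as required.

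The principal obstacle is precisely this concentration step. Both $[e_1]$ and $[e_2]$ are $\sA$-invariant, so a priori any stationary limit $\eta_\infty$ is a convex combination $t\delta_{[e_1]}+(1-t)\delta_{[e_2]}$, and any value $t<1$ would drag the limit Lyapunov exponent strictly below $\alpha$. The total-variation control $d(\sps,\sqs_n)\to 0$ on the full-measure set $\cZ$ (rather than mere weak-$*$ closeness) is exactly what should allow the key lemma in Section~\ref{s.keyproof} to transport the expansion of $\sA$ transverse to $[e_2]$ through to the perturbations $\sB_n$ and so rule out $t<1$; this is the reason why the argument from the irreducible case---based on Furstenberg's formula \eqref{eq.furstenbergformula} and weak-$*$ semi-continuity of the set of stationary measures---is insufficient in the diagonal setting.
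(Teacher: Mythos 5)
Your outline reproduces the paper's strategy for part of the problem: reduce to $\sA\in\cS(\cX)$, conjugate to the diagonal form, dispose of the case $\int\log|\theta_x|\,d\sps=0$ by upper semi-continuity, and for the remaining case combine the integral formula of Lemma~\ref{l.etadeterminaexpoente} (applied to the projection $\eta$ of a $u$-state realizing $\lambda_+(\sB,\sqs)$) with the concentration statement of Proposition~\ref{keyprop} to get the lower bound. That part is sound. But there is a genuine gap at the step where you ``invoke the key result from Section~\ref{s.keyproof}'' to conclude that every accumulation point of the measures $\eta_n$ charges only $[e_1]$: Proposition~\ref{keyprop} carries the hypothesis that \emph{no} one-dimensional subspace is invariant under $B_x$ for $\sqs$-almost every $x$. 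Nearby pairs $(\sB_n,\sqs_n)$ violating this hypothesis are perfectly possible (for instance lower-triangular perturbations of the diagonal cocycle, which preserve the vertical line), and for such pairs the key proposition gives you nothing: $\delta_{[e_2]}$ itself is then a $(\sB_n,\sqs_n)$-stationary measure, so no statement about \emph{all} stationary measures can rule out mass near the vertical, and your claim that the cited result pins the particular $\eta_n$ (the distribution of $E^u_{\sB_n}$) near the horizontal is unproved in this case. As written, your argument covers only the ``irreducible'' perturbations.

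The paper closes exactly this case by a separate, elementary argument which your proposal would need (or some substitute, e.g.\ a cone/domination argument showing $E^u_{\sB_n}$ stays near the horizontal when $\sB_n$ is reducible): if some line $r$ is invariant under $B_x$ for $\sqs$-a.e.\ $x$, then, because $|\theta_x|\neq 1$ on a set of positive measure, $r$ must be close to the horizontal or to the vertical axis; the exponent of $(\sB,\sqs)$ along $r$ is the Birkhoff average $\int\log\|B_x|_r\|\,d\sqs$, which the uniform closeness $d(\sA,\sB)<\gamma$ together with the total-variation closeness $d(\sps,\sqs)<\gamma$ forces to be near $\pm\int\log|\theta_x|\,d\sps$; and since $\lambda_++\lambda_-=0$ on $\cS(\cX)$, both exponents of $(\sB,\sqs)$ are then close to those of $(\sA,\sps)$. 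Once you add this dichotomy (and note, for the irreducible branch, that the $n$'s with $\lambda_+(\sB_n,\sqs_n)=0$ are also covered, since Lemma~\ref{l.etadeterminaexpoente} applies to a stationary measure in that case as well), your limit-point formulation becomes equivalent to the paper's direct estimate.
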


The proofs of these two propositions are given in the next couple of sections.
In view of the previous observations, they contain the proof of Theorem~\ref{t.Bernoulli}.

\subsection{Reducing to the diagonal case}\label{ss.proofnaodiagonal}

The proof of Proposition~\ref{p.naodiagonal}
is a simplified version of ideas of Avila, Viana~\cite{AV3}, partly inspired by
Bonatti, Gomez-Mont, Viana~\cite{BGV03}. For the sake of completeness,
and also because our setting is not strictly contained in~\cite{AV3},
we give the full arguments. The definitions and preliminary results apply
to functions $A$ with values in $\GL(d,\CC)$, for any $d\ge 2$.

The \emph{local stable set} $W^s_{loc}(\bx)$ of $\bx\in M$ is the set of all
$\by=(\sy_n)_{n\in\Z}$ such that $\sx_n=\sy_n$ for all $n\ge 0$.
The \emph{local unstable set} $W^u_{loc}(\bx)$ is defined similarly,
considering $n < 0$ instead. The \emph{projective cocycle} associated to
$\sA:\cX\to\GL(d,\CC)$ is defined by
$$
F_\sA:M\times\Proj(\CC^d)\to M\times\Proj(\CC^d),
\quad (\bx,[v]) \mapsto (f(\bx),[A(\bx)v])
$$
where $A(\bx)= A_{\sx_0}$ for every $\bx\in M$.

\subsubsection{Invariant $u$-states}

Let $\cM(\sps)$ denote the set of probability measures in $M\times\Proj(\CC^d)$
that project down to $\mu$.
A \emph{disintegration} of $m\in\cM(\sps)$ is a measurable function
assigning to each point $\bx\in M$ a probability $m_\bx$ with
$m_\bx\big(\{\bx\}\times\Proj(\CC^d)\big)=1$ and such that
$$
m(E) = \int m_\bx(E)\,d\mu(\bx),
\quad\text{for every measurable $E\subset M\times\Proj(\CC^d)$}.
$$
A disintegration always exists in this setting; moreover, it is essentially unique.
See Rokhlin~\cite{Ro52} and \cite[Appendix~C.6]{Beyond}.

A probability $m\in\cM(\sps)$ is a \emph{$u$-state} if
some  disintegration $\bx\mapsto m_\bx$ is constant on every local unstable set,
restricted to a full $\mu$-measure subset of $M$.
Then the same is true for every disintegration, by essential uniqueness;
moreover, one can choose the disintegration so that it is constant on
local unstable sets on the whole $M$.
If $m$ is an invariant probability then we say that $m$ is an \emph{invariant $u$-state}.
The definition of \emph{invariant $s$-states} is analogous, considering local
stable sets instead, and the same observations apply.

An $su$-\emph{state} is a probability which is both a $u$-state and an $s$-state.

\begin{lemma}\label{l.suestados}
A probability $m\in\cM(\sps)$ is an invariant
$su$-state if and only if $m=\mu\times\eta$ for some probability measure $\eta$
on $\Proj(\CC^d)$ invariant under the action of $A_x$ for $\sps$-almost every $x\in\cX$.
\end{lemma}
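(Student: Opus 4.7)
The plan is to handle the two implications separately. For the $(\Leftarrow)$ direction, if $m = \mu \times \eta$ then the constant assignment $\bx \mapsto m_\bx \equiv \eta$ is a valid disintegration, and it is trivially constant on every local unstable and local stable set, so $m$ is an $su$-state. Invariance of $m$ under $F_\sA$ is equivalent, via the disintegration identity $(A(\bx))_* m_\bx = m_{f(\bx)}$ holding for $\mu$-a.e. $\bx$, to $(A_{x_0})_* \eta = \eta$; since $A(\bx)$ depends only on $\sx_0$ and $\mu = \sps^\Z$, this is the same as $(A_x)_* \eta = \eta$ for $\sps$-a.e. $x \in \cX$, which is the hypothesis.

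For the $(\Rightarrow)$ direction, I would fix a disintegration $\bx \mapsto m_\bx$ of the invariant $su$-state $m$ (unique up to $\mu$-null sets by Rokhlin's theorem, as already recalled). After discarding a null set we may assume $m_\bx = m_\by$ whenever $\by \in W^u_{loc}(\bx)$. Since the local unstable equivalence classes are exactly the level sets of $\bx \mapsto (\sx_n)_{n\ge 0}$, this tells us that $\bx \mapsto m_\bx$ is measurable with respect to the $\sigma$-algebra $\cF^-$ generated by the past coordinates $(\sx_n)_{n<0}$. Dually, being an $s$-state forces $\cF^+$-measurability, where $\cF^+$ is generated by $(\sx_n)_{n\ge 0}$. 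The key point is now that under the Bernoulli measure $\mu = \sps^\Z$, the $\sigma$-algebras $\cF^-$ and $\cF^+$ are independent.

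To convert this into essential constancy of the measure-valued map, I would test against a countable dense family $\{\phi_k\}$ of continuous functions on the compact space $\Proj(\CC^d)$: each real-valued function $\bx \mapsto \int \phi_k\, dm_\bx$ is both $\cF^-$- and $\cF^+$-measurable and hence $\mu$-a.e. equal to a constant $c_k$. Intersecting countably many full-measure sets, the disintegration is itself $\mu$-a.e. equal to some fixed probability $\eta$ on $\Proj(\CC^d)$, giving $m = \mu \times \eta$. Applying once more the invariance relation $(A(\bx))_* m_\bx = m_{f(\bx)}$ yields $(A_x)_* \eta = \eta$ for $\sps$-a.e. $x$, completing the proof. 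I do not anticipate a serious obstacle: the only mildly delicate step is the passage from independent-of-past and independent-of-future to essentially constant for a measure-valued function, which is taken care of by the countable separating family argument above.
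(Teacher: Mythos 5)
Your proof is correct, and in the ``only if'' direction it takes a genuinely, if mildly, different route from the paper's. Both arguments exploit the product structure $\mu=\mu^s\times\mu^u$ of the Bernoulli measure, but the paper argues geometrically: by essential uniqueness the $u$-constant and $s$-constant disintegrations $m^u_\bx$, $m^s_\bx$ agree on a full measure set $X$; a Fubini-type choice produces one unstable set $W^u_{loc}(\bar\bx)$ meeting $X$ in a full $\mu^u$-measure subset, one sets $\eta=m^u_{\bar\bx}$, and the stable sets through $X\cap W^u_{loc}(\bar\bx)$ saturate a full $\mu$-measure set on which $m^s_\bx=\eta$, so the constant family is a disintegration of $m$. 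You instead encode the same product structure as independence of the past and future $\sigma$-algebras and run a zero--one argument: testing the measure-valued map against a countable separating family of continuous functions on the compact space $\Proj(\CC^d)$, each scalar test is a.e.\ equal to an $\cF^-$-measurable and to an $\cF^+$-measurable function, hence a.e.\ constant, yielding $m=\mu\times\eta$; the invariance step $(A_x)_*\eta=\eta$ for $\sps$-a.e.\ $x$ is then identical in both proofs. Your mechanism avoids choosing a reference leaf at the cost of invoking the separating family and the independence lemma (for which the precise statement is that a variable a.e.\ equal to functions measurable with respect to two independent $\sigma$-algebras satisfies $\mu(X\in B)=\mu(X\in B)^2$ for every Borel $B$, hence is a.e.\ constant --- your phrasing ``is both $\cF^-$- and $\cF^+$-measurable'' should be read in this a.e.\ sense). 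One slip to fix: the local unstable sets are the level sets of $\bx\mapsto(\sx_n)_{n<0}$ (the past coordinates), not of $(\sx_n)_{n\ge 0}$; the conclusion you draw, namely $\cF^-$-measurability of a $u$-constant disintegration, is nevertheless the correct one, so the argument is unaffected. You also supply the ``if'' direction, which the paper explicitly leaves to the reader.
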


\begin{proof}The ``if'' part is not used in this paper, so we leave the
proof to the reader.
To prove the "only if" part notice that, by assumption,
$m$ admits disintegrations $\bx\mapsto m_\bx^u$, constant on local
unstable sets, and $\bx\mapsto m_\bx^s$, constant on local stable sets.
By essential uniqueness, there exists a full $\mu$-measure set
$X\subset M$ such that $m_\bx^u=m_\bx^s$ for all $\bx\in X$.
The assumption on $\mu$ implies that
$\mu=\mu^u\times\mu^s$ where $\mu^u$ is a probability on the set
positive one-sided sequences $(\sx_n)_{n\ge 0}$ and $\mu^s$ is a
probability on the set negative one-sided sequences $(\sx_n)_{n<0}$.
Fix $\bar\bx\in M$ such that $W_{loc}^u(\bar\bx)$ intersects $X$ on
a full $\mu^u$-measure set. Then let $\eta = m^u_{\bar\bx}$.
The local stable sets through the points of $X\cap W^u_{loc}(\bx)$
fill-in a full $\mu$-measure subset of $M$. Thus, $\eta=m_\bx^s$
at $\mu$-almost every point and so the constant family
$\bx \mapsto m_\bx=\eta$ is a disintegration of $m$. This means
that $m=\mu\times\eta$. Finally, the fact that $\mu$ and $m$ are
invariant gives $A(\bx)_*m_\bx=m_{f(\bx)}$ at $\mu$-almost
every point and that implies $(A_x)_*\eta=\eta$ for $\sps$-almost every
$x \in \cX$, as claimed.
\end{proof}

\begin{lemma}\label{l.ledrappier}
If $\lambda_\pm(\sA,\sps)=0$ then every $F_\sA$-invariant measure $m$
in $\cM(\sps)$ is an $su$-state.
\end{lemma}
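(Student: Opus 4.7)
The plan is to follow Ledrappier's classical strategy: use vanishing of the extremal exponents to force rigidity of the disintegration $\bx\mapsto m_\bx$, making it both constant on local stable sets (the $s$-state property) and on local unstable sets (the $u$-state property), and then to invoke Lemma~\ref{l.suestados}.

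The argument has two symmetric halves. For the $s$-state I would iterate invariance forward, $m_\bx = (A^n(\bx))^{-1}_* m_{f^n(\bx)}$, and for the $u$-state backward, $m_\bx = (A_{x_{-1}}\cdots A_{x_{-n}})_* m_{f^{-n}(\bx)}$. In each case the aim is to show that some version of $\bx\mapsto m_\bx$ is measurable with respect to the appropriate $\sigma$-algebra: $\sigma(x_k:k\ge 0)$ for the $s$-state and $\sigma(x_k:k<0)$ for the $u$-state. I would employ a reverse martingale argument, conditioning the iterated identities on these $\sigma$-algebras and passing to the limit in $n$. The key challenge, and the place where the hypothesis $\lambda_\pm(\sA,\sps)=0$ enters, is to identify the resulting conditional limits with $m_\bx$ itself rather than with a strictly averaged measure. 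The vanishing exponents guarantee sub-exponential growth of $\|A^n\|^{\pm 1}$ $\mu$-almost surely, and via a Furstenberg-type identity $\lambda_+(\sA,\sps) = \int\log(\|A(\bx)v\|/\|v\|)\,dm$, which holds for any $F_\sA$-invariant $m\in\cM(\sps)$, one extracts a ``zero energy'' condition that rules out genuine spreading of $m_\bx$ under the matrix products. This rigidity step, turning sub-exponential norm growth into pointwise equality of conditional limits, is the hard part of the argument.

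Once both the $s$-state and $u$-state properties are in hand, the disintegration $m_\bx$ is measurable with respect to the intersection $\sigma(x_k:k\ge 0)\cap\sigma(x_k:k<0)$, which is trivial modulo $\mu$-null sets because the coordinates are $\sps$-independent under $\mu=\sps^\Z$. Hence $m_\bx$ is $\mu$-almost surely equal to some fixed probability $\eta$ on $\Proj(\CC^2)$, so $m=\mu\times\eta$; invariance of $m$ under $F_\sA$ forces $(A_x)_*\eta=\eta$ for $\sps$-a.e.\ $x\in\cX$, which by Lemma~\ref{l.suestados} is precisely the $su$-state property.
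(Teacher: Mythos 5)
Your overall strategy is the right one---it is essentially a sketch of Ledrappier's invariance principle---but the proposal never actually carries out the one step that contains all the mathematical content of the lemma. The paper's proof is a two-line citation: let $\cB^s$ be the $\sigma$-algebra of measurable sets that are unions of entire local stable sets, note that $f$ and $F_\sA$ are $\cB^s$-measurable, and apply Theorem~1 of Ledrappier~\cite{Le86} to conclude that the disintegration of any invariant $m\in\cM(\sps)$ is $\cB^s$-measurable mod~$0$, i.e.\ $m$ is an $s$-state; the $u$-state property follows symmetrically. In your write-up the corresponding step is exactly the one you flag as ``the key challenge'' and ``the hard part'': identifying the reverse-martingale conditional limits with $m_\bx$ itself using $\lambda_\pm(\sA,\sps)=0$. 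You describe what should come out of this step but give no argument for it. The facts you invoke---that $\int\phi_\sA\,dm=0$ for every invariant $m$ (which is Lemma~\ref{l.intervalo} when both exponents vanish) and that $\|A^n\|^{\pm1}$ grow sub-exponentially---do not by themselves exclude a genuinely averaged conditional limit; converting this ``zero energy'' information into rigidity of the disintegration is precisely Ledrappier's theorem, proved in~\cite{Le86} by a relative-entropy argument, and it must either be proved or cited. As it stands, the proposal restates the lemma in martingale language rather than proving it.

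Two smaller points. The lemma only asserts the $su$-state property, i.e.\ that some disintegration is constant (mod~$0$) on local stable sets and on local unstable sets; your final paragraph, combining the two properties via independence of the past and future coordinates to get $m=\mu\times\eta$ with $(A_x)_*\eta=\eta$, reproduces the ``only if'' direction of the separate Lemma~\ref{l.suestados} and is not needed here. Moreover, your closing appeal to Lemma~\ref{l.suestados} is to its ``if'' direction, which the paper explicitly leaves to the reader; this is harmless (a product measure obviously has constant disintegration, hence is an $su$-state), but the logic is roundabout: once the $s$- and $u$-state properties are established, the lemma is already proved.
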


\begin{proof}
This is a direct consequence of Ledrappier~\cite[Theorem~1]{Le86}.
Indeed, let $\cB^{s}$ be the $\sigma$-algebra of measurable subsets of $M$
which are unions of entire local stable sets.
Clearly, $f$ and $F_\sA$ are $\cB^{s}$-measurable.
Hence, Ledrappier's theorem gives that the disintegration of any $F_\sA$-invariant
probability $m\in\cM(\sps)$ is $\cB^{s}$-measurable modulo zero $\mu$-measure sets.
This is the same as saying that $m$ is an $s$-state.
Analogously, one proves that $m$ is a $u$-state.
\end{proof}

Let us consider the function $\phi_\sA: M \times \Proj(\CC^d)\rightarrow\R$ defined by
\begin{equation}\label{eq.phiA}
\phi_A(\bx,[v])=\log\frac{\| A(\bx)v\|}{\| v\|}.
\end{equation}

\begin{lemma}\label{l.intervalo}
For every $\sA:\cX \to\GL(d,\CC)$ and every $F_\sA$-invariant probability
 measure $m\in\cM(\sps)$,
$$
\lambda_-(\sA,\sps) \le \int\phi_\sA\,dm \le \lambda_+(\sA,\sps).
$$
\end{lemma}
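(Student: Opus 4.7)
The plan is to deduce this from a combination of Birkhoff's ergodic theorem (applied on the projective bundle) and Oseledets' theorem (applied on the base).

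First I would note that the boundedness assumption \eqref{eq.bounded} makes $\phi_\sA$ a bounded, hence $m$-integrable, function. The key telescoping identity is
$$
\sum_{k=0}^{n-1}\phi_\sA\bigl(F_\sA^k(\bx,[v])\bigr) = \log\frac{\|A^n(\bx)v\|}{\|v\|},
$$
which follows directly from the multiplicative nature of the cocycle $A^n(\bx)=A(f^{n-1}\bx)\cdots A(\bx)$. Since $m$ is $F_\sA$-invariant and $\phi_\sA\in L^1(m)$, Birkhoff's ergodic theorem provides an $F_\sA$-invariant function $\tilde\phi_\sA\in L^1(m)$ such that $\frac{1}{n}\sum_{k=0}^{n-1}\phi_\sA\circ F_\sA^k\to\tilde\phi_\sA$ at $m$-almost every point, and $\int\tilde\phi_\sA\,dm=\int\phi_\sA\,dm$.

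Next I would invoke Oseledets' theorem for $F$ with respect to $\mu$: at $\mu$-almost every $\bx$, every nonzero $v\in\C^d$ satisfies
$$
\lim_{n\to\infty}\frac{1}{n}\log\|A^n(\bx)v\| = \hat\lambda_j(F,\bx)
$$
for the index $j=j(\bx,v)$ determined by the Oseledets filtration, and in particular this limit lies in the interval $[\lambda_-(F,\bx),\lambda_+(F,\bx)]$. Combining this with the identity above and Birkhoff's limit, I get that $\tilde\phi_\sA(\bx,[v])\in[\lambda_-(F,\bx),\lambda_+(F,\bx)]$ for $m$-almost every $(\bx,[v])$; here I use that $m$ projects to $\mu$, so Fubini against the disintegration $\bx\mapsto m_\bx$ transfers the full $\mu$-measure statement on $\bx$ to a full $m$-measure statement on the projective bundle.

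Integrating this sandwich against $m$ and noting that the bounds depend only on $\bx$ (so they may be integrated against the projection $\mu$ of $m$) yields
$$
\lambda_-(\sA,\sps) = \int\lambda_-(F,\bx)\,d\mu(\bx) \le \int\tilde\phi_\sA\,dm = \int\phi_\sA\,dm \le \int\lambda_+(F,\bx)\,d\mu(\bx) = \lambda_+(\sA,\sps),
$$
which is the desired inequality. There is no serious obstacle here; the only point that requires a little care is the transfer from a ``$\mu$-a.e.\ for every $v$'' statement (supplied by Oseledets) to an ``$m$-a.e.\ $(\bx,[v])$'' statement, and this is immediate from the existence of a disintegration of $m$ over $\mu$.
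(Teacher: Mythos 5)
Your argument is correct, but it is not the route the paper takes, so let me compare. The paper's proof is more economical: it uses only the pointwise bound $\sum_{j=0}^{n-1}\phi_\sA(F_\sA^j(\bx,[v]))=\log\bigl(\|A^n(\bx)v\|/\|v\|\bigr)\le\log\|A^n(\bx)\|$, valid at \emph{every} point, integrates it against $m$, and uses $F_\sA$-invariance of $m$ to identify the left-hand side with $n\int\phi_\sA\,dm$ for each fixed $n$; the right-hand side, divided by $n$, converges to $\lambda_+(\sA,\sps)$ by the sub-additive (Furstenberg--Kesten/Kingman) convergence already built into the definition, and the lower bound is symmetric using $\|(A^n(\bx))^{-1}\|^{-1}$. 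Thus no Birkhoff theorem on the projective bundle, no Oseledets theorem, and no transfer of $\mu$-null sets to $m$-null sets are needed. Your proof instead sandwiches the $m$-almost everywhere Birkhoff limit $\tilde\phi_\sA$ between $\lambda_-(F,\bx)$ and $\lambda_+(F,\bx)$ via the Oseledets filtration; this is perfectly valid (the boundedness hypothesis \eqref{eq.bounded} gives integrability, the telescoping identity is right, and the passage from ``$\mu$-a.e.\ $\bx$, all $v\neq0$'' to ``$m$-a.e.\ $(\bx,[v])$'' is immediate since the bad set is the preimage of a $\mu$-null set under the projection), and it even yields slightly more: the Birkhoff average of $\phi_\sA$ is $m$-almost everywhere equal to one of the Lyapunov exponents, which is essentially the observation the paper exploits later in Lemma~\ref{l.igualdade} for the particular measures $m^u$ and $m^s$. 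What the paper's softer argument buys is independence from the finer Oseledets structure and a proof that works verbatim for any invariant $m\in\cM(\sps)$ with only the sub-additive ergodic theorem as input.
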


\begin{proof}
For every $(\bx,[v])\in M\times\Proj(\CC^{d})$ and $n\ge 1$,
$$
\sum_{j=0}^{n-1} \phi_{A}(F_{\sA}^j(\bx,[v])) \le \log\|A^n(\bx)\|\,.
$$
Integrating with respect to any probability $m\in\cM(\sps)$,
$$
\frac 1n \int \sum_{j=0}^{n-1} \phi_\sA\circ F_{\sA}^{j}\,dm
\le \frac 1n \int \log\|A^n(\bx)\|\,d\mu(\bx).
$$
The right hand side converges to $\lambda_{+}(\sA,\sps)$ and, assuming $m$ is invariant,
the left hand side coincides with $\int\phi_{\sA}\,dm$.
This gives the upper bound in the statement. The lower bound is analogous.
\end{proof}

Now let $\sA$ take values in $\SL(2,\CC)$.
We want to show that the upper bound in Lemma~\ref{l.intervalo} is attained at some $u$-state
and the lower bound is attained at some $s$-state.
When $\lambda_{\pm}(\sA,\sps)=0$ this is a trivial consequence of Lemma~\ref{l.ledrappier}.
So, it is no restriction to suppose that $\lambda_+(\sA,\sps)>0>\lambda_-(\sA,\sps)$.

Let $E^u_\bx \oplus E^s_\bx$ be the Oseledets splitting of $F_\sA$,
defined at $\mu$-almost every $\bx$.
Consider the probabilities $m^u$ and $m^s$ defined on $M\times \Proj(\CC^2)$ by
\begin{equation}\label{eq.mus}
m^*(B) =\mu\big(\{\bx: (\bx,E^*_\bx) \in B \}\big)
       =\int \delta_{(\bx,E^*_\bx)}(B)\,d\mu(\bx)
\end{equation}
for $*\in\{s, u\}$ and any measurable subset $B$.
It is clear that $m^u$ and $m^s$ are invariant under $F_\sA$ and project down to $\mu$.
Moreover, their disintegrations are given by
$$
\bx \mapsto \delta_{(\bx,E^*_\bx)} \quad \text{for } *\in\{s,u\}.
$$
Since $E^u_\bx$ depends only on $\{A_{\sx_n}: n < 0\}$ and $E^s_\bx$ depends only on
$\{A_{\sx_n}: n \ge 0\}$, we get that $m^u$ is a $u$-state and $m^s$ is an $s$-state.

\begin{lemma}\label{l.combinacaolinear}
Every $F_\sA$-invariant probability measure $m\in\cM(\sps)$ is a convex
combination $m=\alpha m^u +\beta m^s$, for some $\alpha,\beta\geq 0$ with $\alpha+\beta=1$.
\end{lemma}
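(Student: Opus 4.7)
The plan is to apply Birkhoff's ergodic theorem to the bounded function $\phi_\sA$ from \eqref{eq.phiA} on the projective system $(M\times\Proj(\CC^2),F_\sA,m)$, and to combine the resulting identification of the time average with Oseledets' theorem in both forward and backward time. The key output will be that $m$ is supported modulo null sets on the union of the Oseledets graphs $\{(\bx,E^u_\bx)\}\cup\{(\bx,E^s_\bx)\}$; once this is in hand, $F_\sA$-invariance together with ergodicity of the Bernoulli measure $\mu$ will deliver the convex-combination form.

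First I would invoke Birkhoff in forward time. Since $\sum_{j=0}^{n-1}\phi_\sA\circ F_\sA^j(\bx,[v])=\log(\|A^n(\bx)v\|/\|v\|)$, the average converges $m$-a.e.\ to an $F_\sA$-invariant function which, by Oseledets, equals $\lambda_+$ whenever $[v]\neq E^s_\bx$ and $\lambda_-$ when $[v]=E^s_\bx$. Next I would run the same argument in backward time: the Birkhoff sum telescopes to
$$
\frac1n\sum_{j=-n}^{-1}\phi_\sA\circ F_\sA^j(\bx,[v])=-\frac1n\log\frac{\|A^{-n}(\bx)v\|}{\|v\|},
$$
which by Oseledets applied to the inverse cocycle tends to $\lambda_-$ when $[v]\neq E^u_\bx$ and to $\lambda_+$ when $[v]=E^u_\bx$. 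Because $f$ is invertible and $m$ is $F_\sA$-invariant, the classical fact that forward and backward one-sided Birkhoff averages of an $L^1$ function coincide $m$-a.e.\ forces the two descriptions of the limit to agree $m$-a.e.

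Matching them and using the hypothesis $\lambda_+>0>\lambda_-$ (so $\lambda_+\neq\lambda_-$), the equality can hold $m$-a.e.\ only on the set where $[v]\in\{E^u_\bx,E^s_\bx\}$. Hence the disintegration has the form $m_\bx=\alpha(\bx)\delta_{E^u_\bx}+\beta(\bx)\delta_{E^s_\bx}$ with $\alpha(\bx)+\beta(\bx)=1$. Since $A_\bx$ sends $E^u_\bx$ to $E^u_{f\bx}$ and $E^s_\bx$ to $E^s_{f\bx}$, the invariance $(A_\bx)_*m_\bx=m_{f\bx}$, combined with $E^u_{f\bx}\neq E^s_{f\bx}$, yields $\alpha\circ f=\alpha$ and $\beta\circ f=\beta$ almost surely. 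Ergodicity of the Bernoulli measure $\mu$ then makes $\alpha$ and $\beta$ constant, and unwinding the definitions of $m^u$ and $m^s$ in \eqref{eq.mus} gives $m=\alpha m^u+\beta m^s$.

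The delicate point is the Oseledets identification of the two one-sided time averages and their reconciliation: the forward average singles out $E^s_\bx$ as the exceptional fibre, the backward average singles out $E^u_\bx$, and it is precisely this asymmetry, together with $\lambda_+\neq\lambda_-$, that confines $m$ to the Oseledets graphs. No essentially new ingredient beyond the standard Birkhoff/Oseledets machinery is needed once the inverse-cocycle bookkeeping is carried out correctly.
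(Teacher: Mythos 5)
Your argument is correct, but it reaches the key fact by a different route than the paper. Both proofs hinge on showing that $m$ gives full weight to the union of the Oseledets graphs $\{(\bx,E^u_\bx)\}\cup\{(\bx,E^s_\bx)\}$; you get this by matching the forward and backward one-sided Birkhoff averages of $\phi_\sA$ (both equal $E_m[\phi_\sA\mid\mathcal I]$ $m$-a.e.), noting via Oseledets that off the graphs the forward limit is $\lambda_+$ while the backward limit is $\lambda_-$, which is incompatible since $\lambda_+>\lambda_-$. The paper instead argues by Poincar\'e recurrence: for each $\kappa>0$ it considers the set $X_\kappa$ of points $(\bx,[v])$ whose components $v^u,v^s$ in the splitting are $\kappa$-comparable, observes that the exponential drift of the ratio $\|v^u\|/\|v^s\|$ (at rate $\lambda_+-\lambda_->0$) forces every point of $X_\kappa$ to return to it only finitely often, and concludes $m(X_\kappa)=0$ for all $\kappa$. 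The recurrence argument is marginally more economical — it needs only the existence of the splitting and the exponent gap, not Birkhoff's theorem, the coincidence of forward and backward averages, or the inverse-cocycle bookkeeping — whereas your argument identifies explicitly which fibre is exceptional in each time direction, which is a clean way to see why the mass must sit exactly on the two graphs. Your concluding step (disintegration $m_\bx=\alpha(\bx)\delta_{E^u_\bx}+\beta(\bx)\delta_{E^s_\bx}$, invariance giving $\alpha\circ f=\alpha$, and ergodicity of $\mu$ making $\alpha,\beta$ constant) spells out carefully what the paper leaves implicit in its final sentence, and is correct; note only that the standing hypothesis $\lambda_+>0>\lambda_-$ you invoke is indeed in force at this point of the paper, having been reduced to just before the lemma.
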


\begin{proof}
Given $\kappa > 0$, define $X_{\kappa}$ to be the set of all
$(\bx,[v])\in M\times\Proj(\CC^{2})$ such that the Oseledets splitting
$E^{u}_{\bx} \oplus E^{s}_{\bx}$ is defined at $\bx$ and $[v]$ splits
$v=v^{u}+v^{s}$ with $ \kappa^{-1}\|v^{s}\| \le \|v^{u}\| \le \kappa \|v^{s}\|$.
Since the two Lyapunov exponents are distinct, any point of $X_{\kappa }$
returns at most finitely many times to $X_{\kappa}$. So,
by Poincar\'e recurrence, $m(X_{\kappa})=0$ for every $\kappa$.
This means that $m$ gives full weight to $\{(\bx,E^{u}_{\bx}), (\bx,E^{s}_{\bx}): \bx\in M\}$
and so it is a convex combination of $m^{u}$ and $m^{s}$.
\end{proof}

\begin{lemma}\label{l.igualdade}
$\lambda_+(\sA,\sps)=\int \phi_\sA \, dm^u$ and $\lambda_-(\sA,\sps)=\int \phi_\sA \, dm^s$.
\end{lemma}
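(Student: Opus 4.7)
The plan is to combine the explicit disintegrations of $m^u$ and $m^s$ with the Oseledets characterization of the splitting $E^u_\bx \oplus E^s_\bx$. The starting point is the telescoping identity
\begin{equation*}
\sum_{j=0}^{n-1} \phi_\sA\bigl(F_\sA^j(\bx,[v])\bigr) = \log \frac{\|A^n(\bx)v\|}{\|v\|},
\end{equation*}
which is immediate from the multiplicative cocycle property $A^n(\bx)=A(f^{n-1}\bx)\cdots A(\bx)$ and the definition \eqref{eq.phiA} of $\phi_\sA$.

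Next I would exploit the fact that the Oseledets line bundles are $F_\sA$-invariant, so that $F_\sA^j(\bx,E^u_\bx) = (f^j\bx, E^u_{f^j\bx})$. Combined with $F_\sA$-invariance of $m^u$ and its disintegration $\bx\mapsto\delta_{(\bx,E^u_\bx)}$, this yields, for every $n\ge 1$,
\begin{equation*}
\int \phi_\sA \, dm^u \;=\; \frac{1}{n}\int \sum_{j=0}^{n-1} \phi_\sA\circ F_\sA^{\,j}\,dm^u \;=\; \frac{1}{n}\int \log \frac{\|A^n(\bx)\,e^u_\bx\|}{\|e^u_\bx\|}\,d\mu(\bx),
\end{equation*}
where $e^u_\bx$ is any measurable choice of a nonzero vector in $E^u_\bx$ (the expression is independent of the choice).

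Now I let $n\to\infty$. By Oseledets, the integrand converges $\mu$-almost everywhere to $\lambda_+(\sA,\sps)$ (a constant, since the shift on $M$ is ergodic). The hypothesis \eqref{eq.bounded} gives a uniform bound $|\frac{1}{n}\log\|A^n(\bx)v\|/\|v\||\le \sup_x\log\|A_x^{\pm 1}\|$, so dominated convergence delivers $\int\phi_\sA\,dm^u=\lambda_+(\sA,\sps)$. The identity $\int\phi_\sA\,dm^s=\lambda_-(\sA,\sps)$ is entirely analogous, using the disintegration $\bx\mapsto\delta_{(\bx,E^s_\bx)}$ of $m^s$ and the fact that forward iterates of vectors in $E^s_\bx$ grow at rate $\lambda_-(\sA,\sps)$.

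There is no serious obstacle here: the essential content is that the invariance of $m^u$ and $m^s$ transforms a statement about integrals of $\phi_\sA$ into a Cesàro average of $\log\|A^n(\cdot)e^{u/s}_\cdot\|/n$, which is precisely where the Oseledets theorem produces the extremal exponents. The only minor point to verify is that the measurable selection of $e^u_\bx$ exists and that the dominating constant from \eqref{eq.bounded} is uniform — both are automatic.
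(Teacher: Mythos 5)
Your proof is correct and follows essentially the same route as the paper: both rest on the telescoping identity for $\phi_\sA$ along the invariant Oseledets direction and on the disintegration $\bx\mapsto\delta_{(\bx,E^u_\bx)}$ of $m^u$, identifying $\int\phi_\sA\,dm^u$ with the asymptotic growth rate along $E^u_\bx$. The only cosmetic difference is that you pass to the limit via invariance of $m^u$ plus dominated convergence (using \eqref{eq.bounded}), whereas the paper invokes the Birkhoff average $\tilde\phi_\sA$ and the identity $\int\tilde\phi_\sA\,dm^u=\int\phi_\sA\,dm^u$; both justifications are valid.
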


\begin{proof}
Let $v^u_\bx$ be a unit vector in the Oseledets subspace $E^u_\bx$. Then
\begin{align*}
\lambda_+(\sA,\bx)
& = \lim_{n\rightarrow \infty}\frac{1}{n}\log\| A^n(\bx)v^u_\bx\|
  = \lim_{n\rightarrow \infty}\frac{1}{n}\sum_{j=0}^{n-1}\log\| A(f^j(\bx))v^u_{f^j(\bx)}\| \\
& = \lim_{n\rightarrow \infty}\frac{1}{n}\sum_{j=0}^{n-1}\phi_\sA(f^j(\bx), E^u_{f^j(\bx)})
  = \tilde{\phi}_\sA(\bx,E^u_\bx)
\end{align*}
for $\mu$-almost every $\bx$, where $\tilde{\phi}_\sA$ is the Birkhoff average of $\phi_\sA$
for $F_\sA$. Hence,
\begin{align*}
\lambda_+(\sA,\sps)
& = \int \tilde{\phi}_\sA(\bx, E^u_\bx)\,d\mu(\bx)
  = \int \tilde{\phi}_\sA \, dm^u
  = \int \phi_\sA \, dm^u.
\end{align*}
Analogously, $\lambda_-(\sA,\sps)=\int \phi_\sA dm^s$. This completes the proof.
\end{proof}

\begin{remark}\label{r.uniqueustaterealizing}
It follows from Lemma~\ref{l.combinacaolinear} that $m^{u}$ is the unique invariant measure
$m$ such that $\lambda_+(\sA,\sps)=\int \phi_\sA \, dm$.
\end{remark}

\subsubsection{Stationary measures}
Given $(\sB,\sqs)$ in $\cS(X) \times \cP(\cX)$, a probability $\eta$ on $\Proj(\CC^2)$
is called $(\sB,\sqs)$-\emph{stationary} if
\begin{equation}\label{eq.stationary}
\eta = \int (B_x)_* \eta \,d\sqs(x).
\end{equation}
The next lemma asserts that the stationary measures are the projections to
$\Proj(\CC^2)$ of the $u$-states of the corresponding cocycle.
We are going to denote $M^u=\cX^{\Z_+}$ and $M^s=\cX^{\Z_-}$.
Notice that $\sqs^\Z=\mu^s\times\mu^u$ where $\mu^*$ is a measure on $M^*$,
for $*\in\{s, u\}$.

\begin{lemma}\label{l.uestadoseestacionaria}
If $m$ is an invariant $u$-state for $(\sB,\sqs)$ then its projection $\eta$ to $\Proj(\CC^2)$ is a
$(\sB,\sqs)$-stationary measure. Conversely, given any $(\sB,\sqs)$-stationary $\eta$ there exists an
invariant $u$-state that projects to $\eta$.
\end{lemma}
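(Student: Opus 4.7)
The plan is to use the product structure $\mu := \sqs^\Z = \mu^s\times \mu^u$ and the fact that $u$-state disintegrations are measurable with respect to negative coordinates alone.

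\emph{Direct implication.} Suppose $m$ is an invariant $u$-state for $(\sB,\sqs)$. Choose a disintegration $\bx\mapsto m_\bx$ that is constant on every local unstable set; then $m_\bx$ depends only on the past $\bx^- = (\sx_n)_{n<0}$, so I may write $m_\bx = m_{\bx^-}$, and the projection of $m$ to $\Proj(\CC^2)$ is $\eta = \int m_{\bx^-}\,d\mu^s(\bx^-)$. From $F_\sB$-invariance, $(B_{\sx_0})_*m_\bx = m_{f(\bx)}$ for $\mu$-a.e.\ $\bx$, and integrating using $f$-invariance:
$$
\eta = \int m_{f(\bx)}\,d\mu(\bx) = \int (B_{\sx_0})_* m_{\bx^-}\,d\mu(\bx).
$$
Because the integrand depends only on $\sx_0$ and $\bx^-$, and because these are independent under the product $\mu$, Fubini yields
$$
\eta = \int (B_x)_*\left(\int m_{\bx^-}\,d\mu^s(\bx^-)\right) d\sqs(x) = \int (B_x)_*\eta\,d\sqs(x),
$$
which is \eqref{eq.stationary}.

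\emph{Converse implication.} Given a $(\sB,\sqs)$-stationary $\eta$, I construct an invariant $u$-state projecting to $\eta$ as a $\mu$-a.e.\ weak limit of backward push-forwards. Set
$$
\eta_\bx^{(n)} := (B_{\sx_{-1}} B_{\sx_{-2}} \cdots B_{\sx_{-n}})_*\eta,\qquad n\ge 1,
$$
which depends only on the coordinates $\sx_{-1},\dots,\sx_{-n}$ of $\bx$. For each continuous $\psi$ on $\Proj(\CC^2)$, the sequence $n\mapsto \int \psi\,d\eta_\bx^{(n)}$ is a uniformly bounded martingale relative to the filtration generated by $\sx_{-1},\dots,\sx_{-n}$: indeed, stationarity gives
$$
\int \eta_\bx^{(n)}\,d\sqs(\sx_{-n}) = (B_{\sx_{-1}}\cdots B_{\sx_{-(n-1)}})_*\! \int (B_x)_*\eta\,d\sqs(x) = \eta_\bx^{(n-1)}.
$$
Doob's martingale convergence theorem together with separability of $C(\Proj(\CC^2))$ provides a $\mu$-a.e.\ weak limit $m_\bx := \lim_n \eta_\bx^{(n)}$, depending only on $\bx^-$; this is the disintegration of a probability $m$ on $M\times \Proj(\CC^2)$. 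By dominated convergence $m$ projects to $\eta$ (since $\int \eta_\bx^{(n)}\,d\mu(\bx) = \eta$ for every $n$ by iterating stationarity), and invariance $(F_\sB)_* m = m$ follows from the pointwise identity $(B_{\sx_0})_*\eta_\bx^{(n)} = \eta_{f(\bx)}^{(n+1)}$ upon letting $n\to\infty$.

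The step I expect to require most care is the passage from the $\mu$-a.e.\ pointwise weak limit of the $\eta_\bx^{(n)}$ to a genuine measurable disintegration, since the null exceptional set a priori depends on the test function. Choosing a countable dense family in $C(\Proj(\CC^2))$ and redefining $m_\bx$ on a $\mu$-null set remedies this, after which Rokhlin's disintegration theorem assembles the $m_\bx$ into the desired $m$. Everything else is bookkeeping; stationarity of $\eta$ is precisely what guarantees the coherence of the backward construction.
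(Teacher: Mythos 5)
Your proposal is correct and follows essentially the same route as the paper: the direct part is the paper's computation (independence of $\sx_0$ from the past under $\sqs^\Z$ plus Fubini), and your $\eta_\bx^{(n)}=(B_{\sx_{-1}}\cdots B_{\sx_{-n}})_*\eta$ is exactly the paper's backward martingale $m^n_\bx=\sB^n(f^{-n}(\bx))_*\eta$, handled by the same martingale convergence argument. The care you flag about the exceptional null set and assembling a measurable disintegration is handled the same way (countable dense family of test functions), so there is nothing genuinely different here.
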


\begin{proof}
Let $\bx\mapsto m_{\bx}$ be a disintegration of $m$ constant along unstable leaves.
For any measurable set $I\subset\Proj(\CC^{2})$,
$$
\eta(I)=m(M\times I) = \int m_{\bx}(M\times I) \, d\mu(\bx)   = \int m_{f(\bx)}(M\times I) \, d\mu(\bx) \\
$$
because $\mu$ is $f$-invariant. Since $m$ is $F_{\sB}$-invariant,
the expression on the right hand side may be rewritten as
\begin{align*}
\int \sB(\bx)_* m_{\bx}(M\times I) & \, d\mu(\bx) \\
    & = \int_{M^{s}} \big(\int_{M^{u}} \sB(\bx)_* m_{\bx}(M\times I) \, d\mu^{u}(\bx^{u})\big) \,d\mu^{s}(\bx^{s}).
\end{align*}
Since the disintegration is constant on local unstable sets and $B(\bx^{s},\bx^{u})$ depends
only on $\bx^s$ (we write $B(\bx^{s})$ instead), this last expression coincides with
$$
\begin{aligned}
& \int_{\Sigma^{s}}\sB(\bx^{s})_* \big(\int_{\Sigma^{u}} m_{\bx^{u}}(M\times I) \, d\mu^{u}(\bx^{u})\big) \,d\mu^{s}(\bx^{s}) \\
& = \int_{\Sigma^{s}}\sB(\bx^{s})_*\eta(I) \,d\mu^{s}(\bx^{s})
 = \int \sB(\bx)_*\eta(I) \,d\mu(\bx)
 = \int (B_x)_*\eta(I) \,d\sqs(x).
\end{aligned}
$$
Thus, $\eta=\int(\sB_x)_*\eta \, d\sqs(x)$ as claimed.

Conversely, given any $(\sB,\sqs)$-stationary measure $\eta$, consider the sequence
of functions
$$
m^n: \bx \mapsto m^{n}_\bx=\sB^{n}(f^{-n}(\bx))_*\eta
$$
with values in the space of probabilities on $\Proj(\CC^2)$. It is clear from the
definition that each $m^n$ is measurable with respect to the $\sigma$-algebra
$\cF^n$ of subsets of $M$ generated by the cylinders
$$
[-n:\,\Delta_{-n},\dots,\Delta_{-1}]=
\{\bx\in M: x_i \in \Delta_i \text{ for } i=-n,\dots,-1\},
$$
where the $\Delta_i$ are measurable subsets of $\cX$. These $\sigma$-algebras $\cF^n$ form
a non-decreasing sequence. We claim that $(m^n,\cF^n)$ is a martingale, that is,
\begin{equation}\label{eq.martingale}
\int_{C} m^{n+1} \, d\mu =\int_{C} m^{n}\, d\mu
\quad\text{for every $C\in\cF^n$ and every $n\ge 1$.}
\end{equation}
To prove this, it suffices to treat the case when $C$ is a cylinder $[-n:\Delta_{-n},\dots,\Delta_{-1}]$.
Then, for any $n\ge 1$,
$$
\begin{aligned}
\int_C \sA^{n+1}(f^{-n-1}(\bx))_*\eta \,d\mu(\bx)
& = \int_C \sA^{n}(f^{-n}(\bx))_*\sA(f^{-n-1}(\bx))_*\eta\, d\mu(\bx)\\
& = \int_{C}\sA^{n}(f^{-n}(\bx))_*\big[\int_\cX(A_{y})_*\eta\, dp(y)\big]d\mu(\bx)\\
& = \int_C \sA^n(f^{-n}(\bx))_*\eta\, d\mu
\end{aligned}
$$
because $\eta$ is stationary. This proves the claim \eqref{eq.martingale}.
Then, by the martingale convergence theorem
(see \cite[Chapter~5]{Br68}), there exists a function $\bx \mapsto m_\bx$ such that $m^n_\bx$
converges $\mu$-almost everywhere to $m_\bx$ in the weak$^*$ topology.
Let $m$ be the probability measure defined on $M\times\Proj(\CC^2)$ by
$$
m(E) = \int m_\bx\big(E \cap (\{\bx\}\times\Proj(\CC^2))\big)\,d\mu(\bx)
$$
for any measurable set $E$. By construction, the disintegration $\bx \mapsto m_\bx$
is constant on every $\{\bx^s\}\times M^u$. This means that $m$
is a $u$-state. Also by construction, $m_{f(\bx)} = A(\bx)_* m_\bx$ for $\mu$-almost
every $\bx\in M$. This proves that the $u$-state $m$ is invariant. Moreover,
by \eqref{eq.martingale} and the assumption that $\eta$ is stationary,
$$
m^n(M\times I)
= m^1(M\times I)
= \int_M (A_x)_*\eta(I)\,dp(x)
= \eta(I)
$$
for every $n\ge 1$ and any measurable set $I\subset \Proj(\CC^2)$. This means that
$m^n$ projects to $\eta$ for every $n\ge 1$. Then so does the limit $m$.
This completes the proof of the lemma.
\end{proof}

We are also going to show that the projection of $m^u$ to the projective space $\Proj(\C^2)$
completely determines the Lyapunov exponents:

\begin{lemma}\label{l.etadeterminaexpoente}
Let $m$ be a $u$-state realizing $\lambda_+(\sA,\sps)$ and let $\eta$ be its projection to
$\Proj(\C^2)$. Then
$$
\lambda_+(\sA,\sps)
=\int \int \log \phi_A(x,v)\,d\eta([v])\,d\sps(x)
$$
\end{lemma}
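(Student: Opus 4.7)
The plan is to reduce to an application of Fubini's theorem, exploiting two complementary pieces of information: the $u$-state structure of $m$ makes the disintegration measurable with respect to the past, while the integrand $\phi_\sA$ depends only on the present coordinate. First I would invoke the hypothesis that $m$ realizes $\lambda_+(\sA,\sps)$, together with the proof of Lemma~\ref{l.igualdade}, to write
$$
\lambda_+(\sA,\sps) = \int \phi_\sA \, dm = \int\int \phi_\sA(\bx,[v]) \, dm_{\bx}([v]) \, d\mu(\bx),
$$
for any disintegration $\bx\mapsto m_\bx$ of $m$.

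Next I would fix the disintegration to be constant along local unstable sets (which exists because $m$ is a $u$-state). Since $W^u_{loc}(\bx)$ fixes the negative coordinates of $\bx$, this means $m_\bx$ depends only on $\bx^s=(x_n)_{n<0}$; write $m_\bx = m_{\bx^s}$. Using the product decomposition $\mu = \mu^s\times\mu^u$ on $M = M^s\times M^u$ and the fact that $\phi_\sA(\bx,[v]) = \log(\|A_{x_0}v\|/\|v\|)$ depends only on $x_0$ (part of $\bx^u$) and on $[v]$, Fubini's theorem gives
$$
\int \phi_\sA\,dm = \int_{M^u} \int_{\Proj(\CC^2)} \phi_\sA(x_0,[v]) \, d\Big(\int_{M^s} m_{\bx^s}\,d\mu^s(\bx^s)\Big)([v]) \, d\mu^u(\bx^u).
$$
The inner signed measure in brackets is exactly the projection $\eta$ of $m$ to $\Proj(\CC^2)$, because $\eta(I)=\int m_\bx(I)\,d\mu(\bx)=\int m_{\bx^s}(I)\,d\mu^s(\bx^s)$.

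Finally, since the remaining integrand depends on $\bx^u$ only through $x_0$, whose marginal under $\mu^u$ is $\sps$, the outer integral collapses to an integral against $\sps$:
$$
\lambda_+(\sA,\sps) = \int_\cX \int_{\Proj(\CC^2)} \log\frac{\|A_x v\|}{\|v\|} \, d\eta([v]) \, d\sps(x),
$$
which is the asserted identity. There is no real obstacle here — the content of the lemma is simply a bookkeeping exercise that pairs the $u$-state (past-measurable) property of the disintegration with the one-step (present-only) dependence of $\phi_\sA$; the only point requiring care is verifying that $m_\bx$ can indeed be taken to depend solely on $(x_n)_{n<0}$, which is precisely what essential uniqueness of disintegrations together with $u$-state invariance delivers.
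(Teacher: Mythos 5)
Your proof is correct, but it takes a genuinely different (and in fact leaner) route than the paper. The paper distinguishes two cases: when $\lambda_+(\sA,\sps)=0$ it invokes Ledrappier's theorem (Lemma~\ref{l.ledrappier}) and Lemma~\ref{l.suestados} to see that any invariant measure projecting to $\mu$ is a product $\mu\times\eta$, and concludes by Fubini; when $\lambda_+(\sA,\sps)>0$ it uses the uniqueness statement (Remark~\ref{r.uniqueustaterealizing}, via Lemma~\ref{l.combinacaolinear}) to identify the realizing measure with $m^u$, whose disintegration is $\bx\mapsto\delta_{E^u_\bx}$ with $E^u_\bx$ depending only on the negative coordinates, and then performs exactly the factorization $\mu=\mu^s\times\mu^u$ you describe. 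You bypass both the case distinction and the identification $m=m^u$: your computation shows that for \emph{every} $u$-state $m$ projecting to $\mu$, with projection $\eta$, one has $\int\phi_\sA\,dm=\int_\cX\int_{\Proj(\C^2)}\phi_\sA(x,[v])\,d\eta([v])\,d\sps(x)$, because the disintegration (past-measurable) and the integrand (depending only on $x_0$) separate under $\mu^s\times\mu^u$; the hypothesis that $m$ realizes $\lambda_+$ is then used only to replace $\int\phi_\sA\,dm$ by $\lambda_+(\sA,\sps)$. This buys a more unified and slightly more general argument (invariance of $m$ and the dichotomy on $\lambda_+$ are never needed, whereas the paper's uniqueness step does use invariance), at the cost of nothing beyond the bookkeeping you already carry out; the only points deserving explicit mention are that the disintegration can be chosen constant on local unstable sets everywhere (as the paper notes after defining $u$-states), so that $\bx\mapsto m_\bx$ factors measurably through $(x_n)_{n<0}$, and that $\phi_\sA$ is bounded by \eqref{eq.bounded}, which legitimizes the application of Fubini.
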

\begin{proof}
Suppose first that $\lambda_+(\sA,\sps)=0$. By Lemmas~\ref{l.ledrappier} and \ref{l.suestados},
every $F_\sA$-invariant probability $m$ that projects down to $\mu$ realizes the largest exponent
and is a product measure $m=\mu\times\eta$. Thus, in this case, the lemma follows immediately from
Fubini's Theorem.
If $\lambda_+(\sA,\sps)>0$, then $m^{u}$ is the unique $u$-state that realizes $\lambda_+$.
Then a straightforward calculation,
\begin{align*}
\lambda_+(\sA,\sps)&=\int_{M}\log\|\sA(\bx)E_\bx^{u}\|d\mu 
=\int_{M^{s}}\int_{M^{u}}\log\|A(\bx^{s})E_{\bx^u}^{u}\|d\mu(\bx^{u})\,d\mu^{s}\\
&=\int_{\cX}\int_{M^{u}}\log\|A_yE_{\bx^u}^{u}\|d\mu(\bx^{u})\,d\sps(y)\\
&=\int_{\cX}\int_{M^{u}}\int_{\Proj(\C^{2})}\log\frac{\|A_y v\|}{\|v\|}\,d\delta_{E_{x^u}^{u}}d\mu(\bx^{u})\,d\sps(y)\\
&=\int_{\cX}\int_{\Proj(\C^{2})}\log\frac{\|A_y v\|}{\|v\|}\,d\eta([v])\,d\sps(y),
\end{align*}
concludes the proof of the lemma.
\end{proof}

\begin{lemma}\label{l.stationaryisclose}
If $(\sA^k,\sps^k)_k$ converges to $(\sA,\sps)$ and $\eta^k$ is a sequence of
$(\sA^k,\sps^k)$-stationary measure converging to $\eta$ then $\eta$ is
an $(\sA,\sps)$-stationary measure.
\end{lemma}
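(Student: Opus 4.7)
The plan is to test the stationarity identity against an arbitrary continuous function $\phi:\Proj(\C^2)\to\R$ and pass to the limit in $k$, exploiting the three forms of convergence provided by the hypotheses: $\sA^k\to\sA$ uniformly on $\cX$, $\sps^k\to\sps$ in the total-variation-type distance of \eqref{eq.distances}, and $\eta^k\to\eta$ weak$^*$ on the compact space $\Proj(\C^2)$. Concretely, stationarity of $\eta^k$ with respect to $(\sA^k,\sps^k)$ is equivalent to
$$
\int\phi\,d\eta^k=\int_{\cX}\int_{\Proj(\C^2)}\phi([A^k_xv])\,d\eta^k([v])\,d\sps^k(x)
$$
for every continuous $\phi$, and I want to derive the same identity for $(\sA,\sps,\eta)$. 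The left hand side passes to the limit by the weak$^*$ convergence of $\eta^k$, so the remaining task is to pass to the limit on the right.

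I will decompose the difference by inserting two intermediate expressions, isolating each of the three perturbations in turn. First, since $\sup_x\|A^k_x-A_x\|\to 0$, the norms $\|A^k_x\|^{\pm 1}$ are uniformly bounded, and $\phi$ is uniformly continuous on the compact projective space, the integrand difference $\phi([A^k_xv])-\phi([A_xv])$ tends to $0$ uniformly in $(x,v)$; this replaces $A^k$ by $A$ at vanishing cost. Second, setting
$$
\Psi_k(x):=\int\phi([A_xv])\,d\eta^k([v]),
$$
which is measurable in $x$ by Fubini (since $(x,v)\mapsto\phi([A_xv])$ is measurable in $x$ and continuous in $v$) and uniformly bounded by $\|\phi\|_\infty$, the definition of $d(\sps^k,\sps)$ in \eqref{eq.distances} as a supremum over all measurable functions bounded by $1$ gives
$$
\Bigl|\int\Psi_k\,d(\sps^k-\sps)\Bigr|\le\|\phi\|_\infty\,d(\sps^k,\sps)\to 0,
$$
which replaces $\sps^k$ by $\sps$. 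Third, for each fixed $x$ the map $[v]\mapsto\phi([A_xv])$ is continuous on $\Proj(\C^2)$, so weak$^*$ convergence of $\eta^k$ yields the pointwise limit $\Psi_k(x)\to\Psi(x):=\int\phi([A_xv])\,d\eta([v])$, and since $|\Psi_k|\le\|\phi\|_\infty$ the dominated convergence theorem delivers $\int\Psi_k\,d\sps\to\int\Psi\,d\sps$.

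There is no serious obstacle here: the topology on $(\sA,\sps)$ from \eqref{eq.topology2}, being uniform on matrices and total-variation on base measures, is tailored precisely so that a lemma of this form becomes a routine three-step interpolation. The only bookkeeping concerns joint measurability of the $\Psi_k$ (so that Fubini applies) and the fact that in the third step one only obtains pointwise, not uniform, convergence in $x$ — which is enough because $\Psi_k$ is uniformly bounded and dominated convergence is available.
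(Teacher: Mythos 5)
Your proof is correct, and it follows the same overall strategy as the paper --- test stationarity against a continuous $\phi$ and control the three perturbations $\sA^k\to\sA$, $\sps^k\to\sps$, $\eta^k\to\eta$ by a triangle-inequality interpolation --- but the order of interpolation differs, and this changes how the weak$^*$ step is handled. The paper passes through $(A,\eta^k,\sps^k)\to(A,\eta,\sps^k)\to(A,\eta,\sps)$, so when it replaces $\eta^k$ by $\eta$ it is still integrating against the varying measure $\sps^k$ and therefore needs the bound $|\int\phi(A_xv)\,d\eta^k-\int\phi(A_xv)\,d\eta|\le\vep$ \emph{uniformly} in $x$; this is achieved by covering the precompact image $\sA(\cX)$ with finitely many $\delta$-balls around matrices $B_1,\dots,B_n$ and using weak$^*$ convergence only at those finitely many matrices, together with uniform continuity of $\phi$. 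You instead replace $\sps^k$ by $\sps$ first (paying only $\|\phi\|_\infty\,d(\sps^k,\sps)$, exactly as the paper does for its $c_k$ term), so that the $\eta^k\to\eta$ step is an integral against the \emph{fixed} measure $\sps$; then pointwise-in-$x$ convergence of $\Psi_k(x)\to\Psi(x)$ plus the uniform bound $|\Psi_k|\le\|\phi\|_\infty$ and dominated convergence suffice, and no compactness or finite $\delta$-net for $\sA(\cX)$ is needed. Your handling of the first term coincides with the paper's $a_k$ argument, and your remarks on joint measurability of $(x,[v])\mapsto\phi([A_xv])$ (Carath\'eodory: measurable in $x$, continuous in $[v]$) cover the only bookkeeping issue; the final identification of $\eta$ with $\int(A_x)_*\eta\,d\sps$ from equality of integrals of continuous functions is immediate on the compact metric space $\Proj(\C^2)$. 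So your route is marginally more elementary at the one delicate step, while the paper's covering argument is what one would need if one insisted on doing the $\eta$-replacement before the $\sps$-replacement.
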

\begin{proof}
We have to show that
$$
\lim_k \int (A^k_x)_*\eta^k \,d\sps^k=\int (A_x)_*\eta \,d\sps
$$
in the weak$^*$ sense. Let $\phi:\Proj(\C^2)\to \R$ be a continuous function. Then
$$
|\int \int \phi(A^k_xv)\,d\eta^k\,d\sps^k - \int \int \phi(A_xv)\,d\eta\,d\sps|\le a_k + b_k + c_k
$$
where
$$
\begin{aligned}
&a_k=|\int \int \phi(A^k_xv)\,d\eta^k\,d\sps^k - \int \int \phi(A_xv)\,d\eta^k\,d\sps^k|
\\&b_k=|\int \int \phi(A_xv)\,d\eta^k\,d\sps^k - \int \int \phi(A_xv)\,d\eta\,d\sps^k|
\\&c_k=|\int \int \phi(A_xv)\,d\eta\,d\sps^k - \int \int \phi(A_xv)\,d\eta\,d\sps|
\end{aligned}
$$
It is clear that $(a_k)_k$ converges to zero, because $\|A^k_x-A_x\|$ converges uniformly to zero
and $\phi$ is uniformly continuous. To prove that $b_k$ converges to zero we argue as follows.
Given $\vep>0$, fix $\delta >0$ such that $|\phi(v)-\phi(w)|<\vep/3$ for all $v,w\in\Proj(\C^2)$
such that $d(v,w)<\delta$. Since the image of $\sA$ is contained in a compact subset of $\SL(2,\C)$,
there are $B_1,\dots, B_n\in \SL(2,\C)$ such that their $\delta$-neighborhoods cover $\sA(\cX)$.
The assumption that $(\eta^k)_k$ converges to $\eta$ in the weak$^*$ topology implies that there
exists $k_0\in\N$ such that
$$
|\int \phi(B_iv)\,d\eta^k - \int \phi(B_iv)\,d\eta|<\vep/3
$$
for all $k>k_0$ and for all $i=1,\dots,n$. Then we can use the triangle inequality to conclude that
$$
|\int \phi(A_xv)\,d\eta^k - \int \phi(A_xv)\,d\eta| \le \vep
$$
for all $k>k_0$.
Integrating with respect to $\sps^k$ we conclude that $b_k\le\vep$ for all $k>k_0$.
This proves that $b_k$ converges to $0$. Finally, it is clear that $a_k$ converges
to zero, because our assumptions imply that $(\sps^k)_k$ converges strongly to $\sps$.
The proof of the lemma is complete.
\end{proof}

\subsubsection{Proof of Proposition~\ref{p.naodiagonal}}

Notice that $\lambda_+$ is non-negative and, as observed in \eqref{eq.inf1}--\eqref{eq.inf2},
\begin{equation}\label{eq.semi}
(\sA,\sps) \mapsto \lambda_+(\sA,\sps)
 = \inf_n \frac 1n \int \log\|A^n(\bx)\|\,d\mu(\bx)
\end{equation}
is upper-semicontinuous for the topology defined 
by \eqref{eq.topology2}.
So, if $(\sA,\sps)\in\cS(\cX)\times\cP(\cX)$ is a discontinuity point for the largest
Lyapunov exponent then $\lambda_+(\sA,\sps)>0$ and there is a sequence $(\sA^k,\sps^k)_k$
converging to $(\sA,\sps)$ as $k\to\infty$ such that
$$
\lim_k\lambda_+(\sA^k,\sps^k)<\lambda_+(\sA,\sps).
$$
As we have seen, for each $k$ there exists some $(\sA^k,\sps^k)$-stationary measure $\eta^k$
satisfying
$$
\int_\cX \int_{\Proj(\C^2)}\log\|A_x^kv\|\,d\eta^k(v)d\sps^k(x) = \lambda_+(\sA^k,\sps^k).
$$
Up to restricting to a subsequence, we may assume that $(\eta^k)_k$ converges in the weak$^*$
topology to some probability measure $\eta$ on $\Proj(\CC^2)$.
Then $\eta$ is an $(\sA,\sps)$-stationary measure, by Lemma~\ref{l.stationaryisclose}.
Using Lemma~\ref{l.etadeterminaexpoente} we see that
$$\begin{aligned}
\int_\cX \int_{\Proj(\C^2)}\log\|A_xv\|\,d\eta(&v)d\sps(x)
= \lim_k\int_\cX \int_{\Proj(\C^2)}\log\|A^k_xv\|\,d\eta^k(v)d\sps^k(x)
\\& < \lambda_+(\sA,\sps)
 = \int_\cX \int_{\Proj(\C^2)}\log\|A_xv\|\,d\eta^u(v)d\sps(x)
\end{aligned}
$$
where $\eta^u$ is the projection of $m^u$. In particular, by Lemma~\ref{l.uestadoseestacionaria},
there exists an invariant $u$-state $m \neq m^u$. It follows, using
Lemma~\ref{l.combinacaolinear}, that
$$
m=\alpha m^u +\beta m^s \quad\text{with $\alpha+\beta=1$ and $\beta\neq 0$.}
$$
This implies that $m^s$ is a $u$-state, because it is a linear combination
of $m$ and $m^u$.
Hence $m^s$ is an $su$-state. In view of Lemma~\ref{l.suestados} this means that the
Oseledets subspace $E_\bx^s$ is constant on a full $\mu$-measure set.
Let $F^s\in\Proj(\CC^2)$ denote this constant.
Analogously, using that $(\sA,\sps)$ is a discontinuity point for the smallest Lyapunov
exponent, we find $F^u\in\Proj(\CC^2)$ such that $E^u_\bx=F^u$ for $\mu$-almost
every $\bx$. It is clear that $F^u$ and $F^s$ are both invariant under $A_x$,
for $\sps$-almost every $x\in\cX$, because $\mu=\sps^{\Z}$. This means that there
exists $\cZ\subset\cX$ with $\sps(\cZ)=1$ such that the linear operators defined by
the $A_y$, $y\in \cZ$ have a common eigenbasis, which is precisely the first claim in
the proposition. The last claim (commutativity) is a trivial consequence.
This completes the proof of Proposition~\ref{p.naodiagonal}.

\subsection{Handling the diagonal case}\label{ss.proofdiagonal}

Here we prove Proposition~\ref{p.diagonal}.
Let $(\sA,\sps)\in\cS(X) \times \cP(\cX)$ and $\cZ$ be as in the conclusion of Proposition~\ref{p.naodiagonal}
and consider any $\sps\in\cP(\cX)$.
Since conjugacies preserve the Lyapunov exponents, we may suppose $P=\id$ and
\begin{equation}\label{eq.diagonal}
A_x=\left(
            \begin{array}{cc}
               \theta_x & 0 \\
              0 & \theta_x^{-1} \\
            \end{array}
          \right)
\quad \text{for all}\quad x\in\cZ.
\end{equation}
Notice that the Lyapunov exponents of $(\sA,\sps)$ are
\begin{equation}\label{eq.expoentes}
\pm \int_{\cZ} \log |\theta_x|\,d\sps(x).
\end{equation}
If they vanish then $(\sA,\sps)$ is automatically a continuity point, and so there is
nothing to prove. Otherwise, it is no restriction to suppose
\begin{equation}\label{eq.sinal}
\int_{\cZ}\log |\theta_x| > 0.
\end{equation}

Let $V_\vep$ be the $\vep$-neighborhood of the horizontal direction in
$\Proj(\CC^2)$ and $\cZ$ be as given in Proposition~\ref{p.naodiagonal}.
The key step in the proof of Theorem~\ref{t.Bernoulli} is the following

\begin{proposition}\label{keyprop}
Given $\vep >0$ and $\delta >0$ there exists $\gamma >0$ such that if
$(\sB,\sqs) \in V(\sA,\sps,\gamma,\cZ)$ and there is no one-dimensional
subspace invariant under all $B_x$ for $x$ in a full $\sqs$-measure then
$\eta(V_\vep^{c}) \le \delta$
for any $(\sB,\sqs)$-stationary measure $\eta$.
\end{proposition}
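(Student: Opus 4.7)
I would argue by contradiction and compactness. Assume the statement fails: there exist $\vep_0,\delta_0>0$ such that for every $k\ge 1$ one can find $(\sB^k,\sqs^k)\in V(\sA,\sps,1/k,\cZ)$ admitting no $\sqs^k$-a.e.\ invariant line, together with a $(\sB^k,\sqs^k)$-stationary measure $\eta^k$ with $\eta^k(V_{\vep_0}^c)\ge \delta_0$. Pass to a weak$^*$-subsequential limit $\eta^k\to\eta^\infty$; by Lemma~\ref{l.stationaryisclose} the limit is $(\sA,\sps)$-stationary, and upper semicontinuity on the closed set $\overline{V_{\vep_0}^c}$ preserves $\eta^\infty(\overline{V_{\vep_0}^c})\ge\delta_0$.

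Since $A_x$ is diagonal with projective fixed points $H=[1:0]$ and $V=[0:1]$ for every $x\in\cZ$ (a full $\sps$-measure set), and by~\eqref{eq.sinal} the horizontal direction is on average attracting while the vertical is repelling, I would first establish that every $(\sA,\sps)$-stationary measure on $\Proj(\C^2)$ is supported on $\{H,V\}$: in the chart $z=x/y$ the action is $z\mapsto\theta_x^2 z$, so $\log|z|$ undergoes a real random walk of positive drift $2\int\log|\theta_x|\,d\sps$, which admits no probability stationary distribution on $\R$ and forces mass onto $\{\pm\infty\}=\{V,H\}$. Hence $\eta^\infty=a\delta_H+b\delta_V$ with $b\ge\delta_0>0$, so the $\eta^k$ concentrate definite mass in every fixed neighborhood of $V$ for all large $k$; the rest of the proof is devoted to ruling this out.

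Next I would use the iterated stationary equation
$$
\eta^k(V_{\vep_0}^c)=\int \eta^k\bigl((B^k_{x_{n-1}}\cdots B^k_{x_0})^{-1}V_{\vep_0}^c\bigr)\,d(\sqs^k)^n,
$$
combined with Birkhoff's theorem applied to $\log|\theta_x|$ under $\sqs^k$ (which is close to $\sps$). Outside a small exceptional set of sequences, the product $B^k_{x_{n-1}}\cdots B^k_{x_0}$ expands horizontal by $e^{n\lambda+o(n)}$ and contracts vertical by the reciprocal, so its preimage of $V_{\vep_0}^c$ is contained in a neighborhood $U_{r_n}$ of $V$ with $r_n\sim e^{-2n\lambda}$. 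This gives $\eta^k(V_{\vep_0}^c)\le \eta^k(U_{r_n})+o_n(1)$, and the task reduces to a $k$-uniform tail bound $\eta^k(U_r)\le Cr^s$ for some $s>0$. To produce it I would test the stationary identity against $\phi_s(v)=d(v,V)^{-s}$, with $s>0$ chosen small enough that $\int|\theta_x|^{-2s}\,d\sps<1$ (possible because this integral is $1$ at $s=0$ with derivative $-2\int\log|\theta_x|\,d\sps<0$). Working in the affine chart near $V$ where $A_x$ acts as $z\mapsto\theta_x^2z$ and $B^k_x$ as an approximately affine perturbation $z\mapsto \theta_x^2 z+c^k_x$ with $c^k_x=O(1/k)$, an appropriate truncation of $\phi_s$ together with the comparison $\|B^k_x-A_x\|<1/k$ should convert the identity $\int\phi_s\,d\eta^k=\int\!\!\int\phi_s(B^k_xv)\,d\eta^k\,d\sqs^k$ into a contracting fixed-point inequality $\int\phi_s\,d\eta^k\le C_1+C_2\int\phi_s\,d\eta^k$ with $C_2<1$ uniform in $k$, yielding $\int\phi_s\,d\eta^k\le C$ and hence via Chebyshev the required tail bound, contradicting $\eta^k(V_{\vep_0}^c)\ge\delta_0$.

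The main obstacle is the regime $d(v,V)\lesssim\|B^k_x-A_x\|$, where the additive perturbation overwhelms the multiplicative contraction and the pointwise ratio $\phi_s(B^k_xv)/\phi_s(v)$ blows up. The ``no invariant line'' hypothesis is exactly what keeps this region non-degenerate on a positive-$\sqs^k$ set (were $V$ invariant, $\delta_V$ would be stationary and the argument would collapse), but its physical scale shrinks to $0$ as $k\to\infty$, so the truncation scale and the exponent $s$ must be calibrated to the relative sizes of perturbation and expansion, and the stationary equation probably has to be iterated over enough steps for Birkhoff averaging of $\log|\theta_x|$ to fully dominate the bad contribution. I expect this calibration to be the principal technical work of the proof.
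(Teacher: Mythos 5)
There is a genuine gap, and it sits exactly where you place it yourself: the ``calibration'' you defer to at the end is not a technicality but the entire content of the proposition, and the scheme you propose cannot deliver it. Your plan funnels everything into a $k$-uniform moment bound $\int d(v,V)^{-s}\,d\eta^k\le C$, obtained from a contracting fixed-point inequality for the test function $\phi_s$. Two obstructions. First, any such argument must use the ``no invariant line'' hypothesis \emph{quantitatively}, but that hypothesis has no quantitative content: $(\sB,\sqs)$ may coincide with a reducible cocycle except on a set of $x$ of arbitrarily small $\sqs$-measure, or its matrices may fail to share a line only at a scale (off-diagonal size, distance between the individual fixed points) that is not bounded below by any function of $\gamma$. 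Consequently the region where the pointwise comparison $\phi_s(\hat B_x v)\lesssim|\theta_x|^{-2s}\phi_s(v)$ breaks down --- a neighborhood of the (almost) common fixed point near $V$ --- lives at a scale invisible to $\gamma$, so no truncation level or exponent $s$ chosen in terms of $(\sA,\sps,\vep,\delta)$ closes the inequality $\int\phi_s\,d\eta^k\le C_1+C_2\int\phi_s\,d\eta^k$; the sanity check is that for the unperturbed cocycle itself $\delta_V$ is stationary, and your inequality, if it held with constants depending only on the data you allow, would ``prove'' $\delta_V(U_r)\le Cr^s$. Second, the uniform moment bound is strictly stronger than the proposition and is most likely false: the proposition permits mass up to $\delta$ arbitrarily close to $V$ (e.g.\ concentrated near a line fixed by all but a tiny-$\sqs$-measure set of the $B_x$), and such configurations make $\int\phi_s\,d\eta^k$ blow up along the sequence while violating nothing. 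So the contradiction/compactness frame at the start does no work either: the limit $a\delta_H+b\delta_V$ with $b\ge\delta_0$ is not by itself contradictory; the contradiction was to come from the unproven tail bound.

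For comparison, the paper avoids absolute scales altogether. It first shows stationary measures of such $(\sB,\sqs)$ are non-atomic (Lemma~\ref{l.nonatomicmeasure}); then, instead of a Lyapunov function, it books masses of multiplicative annuli $I_j(r)$ against each other using only the stationarity identity and a comparison diagonal cocycle (Lemma~\ref{l.consequencias}, Lemma~\ref{l.sete}, Proposition~\ref{p.oito}), producing the estimate away from $V$ at a radius $r_0$ that is \emph{chosen from the unknown $(\sB,\sqs,\eta)$} via the dichotomy of Corollary~\ref{c.oito}; the innermost region is then handled by a geometric disjointness statement (Proposition~\ref{p.oitenta}: a small ball around $V$ is disjoint from its $\hat B_x$-preimage whenever the comparison fails at $x$) together with an analysis of where the maps' fixed points can cluster (the sets $\Gamma(z,\rho)$, split into a ``reducible'' and an ``irreducible'' subcase), all with constants depending only on $(\sA,\sps,\vep,\delta)$. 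Your outline would need a substitute for this scale-free mechanism --- in particular for Corollary~\ref{c.oito} and Proposition~\ref{p.oitenta} --- and as written it does not contain one.
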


The proof of Proposition~\ref{keyprop} will be given in Section~\ref{s.keyproof}.
Right now, let us conclude the proof of Proposition~\ref{p.diagonal}.

Let $(\sB,\sqs)\in\cS(\cX)\times\cP(\cX)$ be close to $(\sA,\sps)$ in the sense of
\eqref{eq.topology2}.
First, suppose there exists some one-dimensional subspace $r\subset\CC^2$ invariant
under all the $B_x$, $x$ in a $\sqs$-full measure. Then $r$ must be close to either the vertical axis
or the  horizontal axis: that is because \eqref{eq.sinal} implies $|\theta_{x}|\neq 1$
for some $\sqs$-positive measure subset.
Then the Lyapunov  exponent of $(\sB,\sqs)$ along $r$ is close to one of the exponents
\eqref{eq.expoentes}. Since the other exponent is symmetric, this proves that the
Lyapunov exponents of $(\sB,\sqs)$ are close to the Lyapunov exponents of $(\sA,\sps)$.
Now assume $\sB$ does not admit any invariant one-dimensional subspace.
Let $M>0$ such that $M^{-1}\|v\|<\|B_xv\|<M\|v\|$ for $\sps$-almost every $x\in \cX$,
all $v\in \C^{2}$ and $d(\sA,\sB)<1$. Let $0 \ll \vep \ll \delta \ll \rho \ll 1$.
Let $m$ be any $u$-state realizing the largest Lyapunov exponent of $(\sB,\sqs)$,
and $\eta$ its projection on $P(\C^{2})$. By Proposition~\ref{keyprop},
$$
\begin{aligned}
\int_{\Proj(\C^2)}\log\frac{\|B_xv\|}{\|v\|}d\eta([v])&=
\int_{V_\vep^{c}}\log\frac{\|B_xv\|}{\|v\|}d\eta([v])
+\int_{V_\vep}\log\frac{\|B_xv\|}{\|v\|}d\eta([v])
\\& \ge -\delta \log M + \eta(V_\vep)(\log
|\theta_x|-\delta)
\end{aligned}
$$
for $\sqs$-almost every $x\in\cX$. Together with Lemma~\ref{l.etadeterminaexpoente}, this implies
$$
\lambda_+(\sB,\sqs)> \eta(V_\vep)\lambda_+(\sA,\sps)-\delta (\log M + \eta(V_\vep))> \lambda_+(\sA,\sps) - \rho.
$$
Upper semi-continuity gives $\lambda_{+}(\sB,\sqs) \le \lambda_+(\sA,\sps)+\rho$.
Thus, we have shown that $(\sA,\sps)$ is indeed a continuity point for the Lyapunov
exponents.

This reduces the proof of Proposition~\ref{p.diagonal} and Theorem~\ref{t.Bernoulli} to proving
Proposition~\ref{keyprop}.

\section{Proof of the Key Proposition}\label{s.keyproof}

Here we give a suitable reformulation of Proposition~\ref{keyprop} and reduce its proof to two
technical estimates, Propositions~\ref{p.oito} and~\ref{p.oitenta}, whose proof will be presented
in the next section.

\subsection{Preliminary observations}

As a first step we note that under the assumptions of the proposition all stationary
measures are non-atomic.

\begin{lemma}\label{l.nonatomicmeasure}
There exists $\gamma>0$ such that if $(\sB,\sqs)\in V(\sA,\sps,\gamma,\cZ)$ and there
is no one-dimensional subspace of $\RR^2$ invariant under $B_x$ for every $x$ in a full
$\sqs$-measure, then every $(\sB,\sqs)$-stationary measure is non-atomic.
\end{lemma}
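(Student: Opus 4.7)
The plan is a contradiction argument. Suppose $\eta$ is a $(\sB,\sqs)$-stationary measure with an atom; I will extract a one-dimensional subspace invariant under $B_x$ for $\sqs$-a.e.\ $x$, violating the hypothesis. Using that $\int_\cZ \log|\theta_x|\,d\sps > 0$ (otherwise the exponents of $(\sA,\sps)$ vanish and the lemma is trivial), fix $\vep_0 > 0$ and a subset $\cY_0 \subset \cZ$ with $\sps(\cY_0) > 0$ on which $|\theta_x|$ is bounded away from $1$. Take $\gamma$ small enough that (i) $d(\sps,\sqs) < \gamma$ still yields $\sqs(\cY_0) > 0$, and (ii) for $x \in \cY_0$ every $B$ with $\|B - A_x\| < \gamma$ has distinct eigenvalues whose ratio has modulus bounded away from $1$ (in particular is not a root of unity), and its two projective fixed points lie within some $\delta_0 = O(\gamma)$ of the coordinate axes $[1:0]$ and $[0:1]$.

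Let $\alpha$ be the maximum atomic mass of $\eta$ and $S = \{[v] : \eta(\{[v]\}) = \alpha\}$, a finite set of size $\le 1/\alpha$. The stationarity equation
\[\alpha = \eta(\{[v]\}) = \int_{\cX} \eta\bigl(\{B_x^{-1}[v]\}\bigr)\,d\sqs(x)\]
has integrand bounded by $\alpha$, so equality forces $B_x^{-1}[v] \in S$ for $\sqs$-a.e.\ $x$, and then $B_x(S) = S$ by cardinality. Now fix any $x \in \cY_0$: by (ii) the eigenvalue ratio of $B_x$ is not a root of unity, so the projective fixed points of $B_x^k$ coincide with those of $B_x$ for every $k \ge 1$. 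Every $[v] \in S$ is $B_x$-periodic and therefore $B_x$-fixed, so $S$ sits inside the two-element fixed set of $B_x$, giving $|S| \le 2$.

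If $|S| = 1$ the single element is immediately $B_x$-invariant for $\sqs$-a.e.\ $x$, and we are done. If $|S| = 2$, write $S = \{[v_1], [v_2]\}$; for each $x \in \cY_0$ this set equals the fixed-point set of $B_x$, hence (after relabeling) $[v_1]$ lies within $\delta_0$ of $[1:0]$ and $[v_2]$ within $\delta_0$ of $[0:1]$. For any $x \in \cZ$, $A_x$ fixes $[1:0]$ and acts on $\Proj(\CC^2)$ with a Lipschitz constant bounded uniformly by the norms $\|\sA^{\pm 1}\|$; together with $\|B_x - A_x\| < \gamma$ this puts $B_x[v_1]$ within $O(\delta_0 + \gamma)$ of $[1:0]$. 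Choosing $\gamma$ so that this quantity is strictly less than half of $d([1:0],[0:1]) - \delta_0$, the image $B_x[v_1]$ is closer to $[v_1]$ than to $[v_2]$; combined with $B_x(S) = S$ this forces $B_x[v_1] = [v_1]$ for $\sqs$-a.e.\ $x$, producing the forbidden invariant line.

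The main obstacle is this final quantitative step: $\gamma$ must be picked small enough to control simultaneously the proximity of the fixed points of $B_x$ to the coordinate axes on $\cY_0$, the Lipschitz constants of both $A_x$ and $B_x$ on projective space uniformly over all of $\cZ$ (including the part of $\cZ$ where $|\theta_x|$ may approach $1$), and the geometric separation of $[1:0]$ from $[0:1]$, so as to rule out the ``flip'' $B_x[v_1] \to [v_2]$. The uniform norm bound on $\sA$ from \eqref{eq.bounded}, combined with the quantitative hyperbolicity furnished by $\cY_0$, provides exactly the constants needed to close the estimate.
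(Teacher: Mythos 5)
Your argument is correct and follows essentially the same route as the paper's proof: the atoms of maximal mass form a finite set which stationarity forces to be invariant for $\sqs$-almost every $x$, hyperbolicity of $B_x$ (with eigendirections near the axes) on a positive-measure set pins that set down to at most two eigendirections, and the hypothesis of no a.e.-invariant line yields the contradiction --- with your explicit exclusion of the ``flip'' $B_x[v_1]=[v_2]$ spelling out a step the paper leaves implicit. One caveat: your parenthetical claim that the lemma is trivial when $\int\log|\theta_x|\,d\sps=0$ is not accurate --- the statement can actually fail in that case (for instance a small irrational rotation together with $\mathrm{diag}(i,-i)$ has the atomic stationary measure supported on $\{\pm i\}$ yet no common invariant line) --- but nothing is lost because the non-vanishing condition \eqref{eq.sinal} is a standing assumption of this section, which is exactly how the paper's own proof uses it.
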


\begin{proof}
By assumption, $\sA$ is diagonal and the Lyapunov exponents do not vanish. So, we may take
$\gamma>0$ so that if $(\sB,\sqs)\in V(\sA,\sps,\gamma,\cZ)$ then $B_x$ is
hyperbolic and its eigenspaces are close to the horizontal and vertical directions,
for every $x$ in some set $\cL\subset\cX$ with $\sqs(\cL)>0$.
Then any finite set of one-dimensional subspaces invariant under any $B_x$, $x\in\cL$ has
at most two elements. Moreover, they must coincide with the eigenspaces of $B_x$ and,
consequently, are actually fixed under $B_x$. Since we assume there is no one-dimensional
subspace fixed by $B_x$ for $\mu$-almost every $x$, it follows that there is no finite set of
one-dimensional subspaces invariant under $B_x$ for $\mu$-almost every $x$.

Now let us suppose $\eta$ has some atom. Let $z_1$, \dots, $z_N$ be the atoms with the
largest mass, say, $\eta(\{z_i\})=a$ for $i=1$, \dots, $N$. Since $\eta$ is a stationary
measure,
$$
\eta\big(\{B_x^{-1}(z_1),\dots,B_x^{-1}(z_N)\}\big)=\eta\big(\{z_1,\dots,z_N\}\big)=Na
$$
for $\sqs$-almost every $x\in\cX$. Moreover, in view of the previous paragraph, we have
$\{B_x^{-1}(z_1),\dots,B_x^{-1}(z_N)\} \neq \{z_1, \dots,z_N\}$ for a positive $\sqs$-measure
subset of points $x$. This implies that there exists $z\neq z_i$ for $i=1,\dots,N$ such
that $\eta(\{z\})=a$.  That contradicts the choice of the $z_i$ and so the lemma
is proved.
\end{proof}

Let $\phi:\Proj(\CC^2)\rightarrow \CC^2\cup\{\infty\}$, $\phi([z_1,z_2])=z_1/z_2$
be the standard identification between the complex projective space and the
Riemann sphere. Then the projective action of a linear map
$$
B=\left(\begin{array}{cc}
                 a & b \\
                 c & d \\
          \end{array}
  \right)
$$
corresponds to a M\"obius transformation on the sphere
$$
\hat{B}:\C\cup\{\infty\}\rightarrow \C\cup\{\infty\} \quad \hat{B}(z)=\frac{az+b}{cz+d},
$$
in the sense that $\phi\circ B =\hat{B}\circ\phi$. It follows that a measure $\xi$ in
projective space is $(\sB,\sqs)$-stationary if and only if the measure $\eta=\phi_*\xi$
on the sphere satisfies $\eta=\int (\hat{B}_x)_*\eta \,d\sqs(x)$.
Then the measure $\eta$ is also said to be $(\sB,\sqs)$-stationary. Clearly, $\eta$ is
non-atomic if and only if $\xi$ is.

This means that the key Proposition~\ref{keyprop} may be restated as

\begin{proposition}\label{keyprop2}
Given $\vep >0$ and $\delta >0$ there exist $\gamma >0$ such that if
$(\sB,\sqs)\in V(\sA,\sps,\gamma,\cZ)$ and $\sqs(\{x\in\cX: \hat{B}_x(z)=z\})<1$
for all $z\in\C \cup \{\infty\}$ then
\[
\eta(B(0,\vep^{-1})) \le \delta
\]
for any $(\sB,\sqs)$-stationary probability measure $\eta$ on $\C\cup \{\infty\}$.
\end{proposition}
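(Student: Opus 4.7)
The plan is to iterate the stationarity equation \eqref{eq.stationary} and exploit strong hyperbolicity of the iterated cocycle along typical sample paths. From \eqref{eq.stationary} one gets, for every $n\ge 1$,
$$
\eta(B(0,\vep^{-1}))=\int \eta\bigl((\hat B^n(\bx))^{-1}(B(0,\vep^{-1}))\bigr)\,d\mu(\bx).
$$
I would choose $n$ large enough that, for $\mu$-most $\bx$, the matrix $B^n(\bx)$ is very hyperbolic with attracting direction near $\infty$ (the horizontal axis) and repelling direction near $0$ (the vertical axis). The preimage $(\hat B^n(\bx))^{-1}(B(0,\vep^{-1}))$ is then a tiny ball near $0$, on which the non-atomic measure $\eta$ (guaranteed by Lemma~\ref{l.nonatomicmeasure}) should put little mass.

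The first technical estimate (presumably Proposition~\ref{p.oito}) I would prove is a path-wise hyperbolicity statement: for every $\rho>0$ there exist $\gamma>0$, $N\ge 1$ and a set $G_N\subset M$ with $\mu(G_N)\ge 1-\rho$ such that, for all $(\sB,\sqs)\in V(\sA,\sps,\gamma,\cZ)$ and all $\bx\in G_N$, the matrix $B^N(\bx)$ has singular value ratio at least $\rho^{-1}$, attracting direction within spherical distance $\rho$ of $\infty$, and repelling direction within spherical distance $\rho$ of $0$. The engine is Birkhoff's theorem applied to $x\mapsto \log|\theta_x|$, whose $\sps$-integral is the positive number in \eqref{eq.sinal}; this fixes the expansion rate, and since $B_x$ is within $\gamma$ of $\operatorname{diag}(\theta_x,\theta_x^{-1})$ for every $x\in\cZ$ (which carries all the $\sqs$-mass), it also pins the two invariant directions close to the horizontal and vertical axes.

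The second technical estimate (presumably Proposition~\ref{p.oitenta}) is a uniform non-concentration bound: there is a modulus $\omega(\rho)\to 0$ as $\rho\to 0$, depending only on the neighborhood $V(\sA,\sps,\gamma,\cZ)$, such that every $(\sB,\sqs)$-stationary $\eta$ satisfying the no-common-fixed-point hypothesis obeys $\eta(B(z,\rho))\le \omega(\rho)$ for every $z\in\C\cup\{\infty\}$. Lemma~\ref{l.nonatomicmeasure} already gives non-atomicity of each individual $\eta$; the uniform modulus should come from a compactness/contradiction argument, extracting from a would-be counterexample a sequence $(\sB^k,\sqs^k,\eta^k,z_k)$ with $\eta^k(B(z_k,\rho_k))\ge c>0$ and $\rho_k\to 0$, then passing to weak$^*$ limits via Lemma~\ref{l.stationaryisclose} to produce an atomic stationary measure for a limit cocycle, and finally observing that a heavy atom at $z_0$ of the limit forces $z_0$ to be a common fixed point of $\hat B_x$ for $\sqs$-a.e.\ $x$, contradicting the hypothesis carried over to the limit.

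Combining the two estimates closes the proof. Given $\vep,\delta>0$, pick $\rho$ so small that $\omega(2\rho)\le \delta/2$ and $\rho\le\delta/2$, then take $N,\gamma$ from the first estimate for this $\rho$, with $N$ additionally large enough that any matrix of singular value ratio $\ge \rho^{-1}$ pulls $B(0,\vep^{-1})$ back into a ball of spherical radius $\rho$ around its repelling direction. For $\bx\in G_N$ this pullback lies inside $B(0,2\rho)$, of $\eta$-mass at most $\omega(2\rho)\le\delta/2$; for $\bx\notin G_N$ we use the trivial bound $1$. Integrating gives $\eta(B(0,\vep^{-1}))\le \delta/2+\rho\le\delta$. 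The hardest step will be the uniform non-concentration: the irreducibility hypothesis is purely qualitative, so distilling a quantitative modulus valid across all nearby cocycles will require delicate control of the weak$^*$ limits of stationary measures as the cocycle varies, ensuring the no-common-fixed-point property is genuinely preserved in the limit.
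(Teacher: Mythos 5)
Your first step is fine as far as it goes: iterating the stationarity relation and using Birkhoff plus a perturbation estimate to make $B^N(\bx)$ strongly hyperbolic, with directions pinned near the horizontal and vertical axes, on a set of large $\mu$-measure is a correct and natural reduction. But it only reduces the proposition to what you call the second technical estimate, and that is where the argument breaks down. First, that estimate is false as stated: no modulus $\omega(\rho)$ valid for all $z\in\C\cup\{\infty\}$ and uniform over a fixed neighborhood $V(\sA,\sps,\gamma,\cZ)$ can exist, because the neighborhood contains irreducible cocycles arbitrarily close to $(\sA,\sps)$, whose stationary measures are arbitrarily concentrated near $\infty$ (that is precisely what the proposition being proved asserts), so $\sup_\eta \eta(B(\infty,\rho))=1$ for every fixed $\rho>0$. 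What you actually need is non-concentration near $0$ only, uniformly over all irreducible $(\sB,\sqs)$ in the neighborhood---but that uniform statement is essentially the heart of the proposition itself, and your compactness argument cannot deliver it. For fixed $\gamma$ the set of cocycles in $V(\sA,\sps,\gamma,\cZ)$ is not compact in the distance $d(\sA,\sB)=\sup_x\|A_x-B_x\|$, so you cannot extract a limiting cocycle at all; and if instead you shrink $\gamma_k\to0$ so that $(\sB^k,\sqs^k)\to(\sA,\sps)$, then Lemma~\ref{l.stationaryisclose} only tells you that the limit $\eta$ is $(\sA,\sps)$-stationary---and $(\sA,\sps)$ is the diagonal cocycle, for which $\delta_0$, $\delta_\infty$ and every convex combination are stationary. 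An atom of the limit at $0$ is therefore no contradiction: the no-common-fixed-point hypothesis is not closed under limits, and in the critical regime the limit cocycle does have common fixed points. Lemma~\ref{l.nonatomicmeasure} gives non-atomicity of each individual $\eta^k$, but yields no uniform bound on the $\eta^k$-mass of small balls, which is exactly what your final step consumes.

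This is why the paper's proof is quantitative rather than soft. It compares each $\hat B_x$ with a model diagonal map and exploits the positive drift $\int\log|\theta_x|\,d\sps>0$ through the annulus estimates of Proposition~\ref{p.oito} (via Lemma~\ref{l.sete} and the notions of centered and targeted radii), controls the mass near the vertical direction by the disjointness statement of Proposition~\ref{p.oitenta}, and runs a dichotomy according to whether a definite fraction of the maps $\hat B_x$ have fixed points near a common point (the sets $\Gamma(z,\rho)$ and the constant $\beta_0$). All constants are extracted from $(\sA,\sps)$ alone, which is how uniformity over the whole neighborhood is obtained. As the paper itself points out after the statement of Proposition~\ref{keyprop2}, semicontinuity of the set of stationary measures---your Lemma~\ref{l.stationaryisclose} route---is insufficient, because $\Stat(\sA,\sps)$ is the entire segment between $\delta_0$ and $\delta_\infty$ and only the endpoint $\delta_\infty$ realizes $\lambda_+$. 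So your outline assumes, in the guise of the ``uniform non-concentration bound,'' the very fact that has to be proved.
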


The proof of this proposition will appear in the next section. Let us briefly comment on
the statement and the overall strategy of the proof. As mentioned before, the set
$\Stat(A,p)$ of stationary measures varies in a semi-continuous fashion with the data:
if $(B,q)$ is close to $(A,p)$ then every $(B,q)$-stationary measure is close to
$\Stat(A,p)$. This is not sufficient for our purposes because in the diagonal case
there are several stationary measures, not all of which realize the largest Lyapunov
exponent. Indeed, the assumption that both the vertical direction and the horizontal
direction are invariant under almost every $A_x$ implies that both associated Dirac masses
on the Riemann sphere, $\delta_0$ and $\delta_\infty$, are $(A,p)$-stationary measures,
and so $\Stat(A,p)$ the whole line segment between these two Dirac masses.

To establish continuity of the Lyapunov exponents we need to prove the much finer fact
that stationary measures of nearby (irreducible) cocycles are close to the one element of
$\Stat(\sA,\sps)$, namely $\delta_\infty$, that realizes the Lyapunov exponent
$\lambda_+(\sA,\sps)$. That is the meaning of the key proposition. The reason we may
restrict ourselves to irreducible cocycles is because in the reducible case continuity
follows from a different, and much easier argument, as we have seen.

The crucial property that singles out $\delta_{\infty}$ among all $(\sA,\sps)$-stationary
measures is the fact that it is an attractor for the random walk defined by $(A,p)$ on
$\Proj(\CC^2)$. Indeed, the random trajectory $A^n(x)\xi$ of any $\xi\in\Proj(\CC^2)\setminus\{0\}$
converges to $\infty$ almost surely. Consequently, the forward iterates of any probability
$\eta$ with $\eta(\{0\})=0$ under the dynamics
\begin{equation}\label{eq.funcaoestac}
f_\sA: \eta \mapsto \int (A_x)_*\eta \,d\sps(x)
\end{equation}
induced by $\sA$ in the space of the probability measures of $\Proj(\C^2)$ converge to
$\delta_\infty$.

The heart of the proof is, thus, a robustness theorem for certain random walks.
We prove that the attractor persists for all nearby irreducible cocycles: if $(\sB,\sqs)$ is close
enough to $(\sA,\sps)$ and there is no one-dimensional subspace invariant under $\sqs$-almost
every $B_x$, then $f_\sB$ possesses an attractor that is strongly concentrated near $\infty$,
and draws the forward iterates of every Dirac mass. In particular, every fixed point $\eta$
of the operator $f_\sB$ must be strongly concentrated near $\infty$, as claimed.

While the details are fairly lengthy, the main ideas in the proof are very natural,
so that applications of this approach to much more general situations can be expected.
In particular, there is some promising progress in the setting of H\"older continuous
(not locally constant) two-dimensional cocycles over hyperbolic systems.

\subsection{Auxiliary statements}

Recall, from \eqref{eq.diagonal} and \eqref{eq.sinal}, that
\begin{equation}\label{eq.Ai}
A_x=\left(
             \begin{array}{cc}
               \theta_x & 0 \\
               0 & \theta_x^{-1} \\
             \end{array}\right) \quad\text{with}\quad \int \log|\theta_x|d\sps(x) > 0
\end{equation}
for every $x\in\cZ$. By definition, $\sqs(\cZ)=1$ for all $(\sB,\sqs)\in V(\sA,\sps,\gamma,\cZ)$.
Thus, up to  restricting all cocycles to a full measure subset, which does not affect the Lyapunov
exponents, we may assume that $\cZ=\cX$. We do so in all that follows.
Let $\sB$, $\sqs$, and $\eta$ be as in the statement.

\begin{lemma}\label{l.lemmakey1}
There are $\beta, \sigma \in (0,1)$, $k\in\N$, positive numbers $(\sigma_x)_{x\in\cX}$,
and integers $(s_x)_{x\in\cX}$ such that
\begin{itemize}
\item[(a)] $0 <\|\sA\|^{-1}/4\le \sigma_x \le \beta |\theta_x|$ for all $x\in\cX$
\item[(b)] $\sigma_x=\sigma^{s_x}$ for all $x\in\cX$
\item[(c)] $\int \log\sigma_x\, d\sps(x) > 4/k$.
\end{itemize}
\end{lemma}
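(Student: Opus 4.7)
The plan is to construct the auxiliary data directly by choosing $\sigma$ close to $1$ and rounding $\beta|\theta_x|$ down to the nearest power of $\sigma$.

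First I would exploit the boundedness hypothesis~\eqref{eq.bounded}: because $\log\|A^{\pm 1}\|$ is bounded and $A_x$ is diagonal with entries $\theta_x$ and $\theta_x^{-1}$, setting $\|\sA\|:=\sup_x\|A_x\|<\infty$ gives $|\theta_x|,|\theta_x|^{-1}\le\|\sA\|$, and in particular $|\theta_x|\ge\|\sA\|^{-1}$ for every $x\in\cX$. Write $L:=\int\log|\theta_x|\,d\sps>0$.

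Next I would pick $\beta,\sigma\in(0,1)$ close enough to $1$ that
$$
\beta\sigma\ge 1/4 \quand \log(\beta\sigma)>-L/2,
$$
and for each $x\in\cX$ define
$$
s_x:=\left\lceil \frac{\log(\beta|\theta_x|)}{\log\sigma}\right\rceil\in\Z
\quand \sigma_x:=\sigma^{s_x}.
$$
Because $\log\sigma<0$, the ceiling produces the pointwise bracket $\beta|\theta_x|\sigma<\sigma_x\le\beta|\theta_x|$. Property~(b) then holds by construction (the integer $s_x$ may be negative whenever $\beta|\theta_x|>1$, which the statement allows). The upper bound in~(a) is immediate, and the lower bound follows from
$$
\sigma_x>\beta\sigma|\theta_x|\ge\beta\sigma\|\sA\|^{-1}\ge\|\sA\|^{-1}/4.
$$
For~(c) I integrate the pointwise inequality $\log\sigma_x>\log(\beta\sigma)+\log|\theta_x|$ to get
$$
\int\log\sigma_x\,d\sps>\log(\beta\sigma)+L>L/2,
$$
and then choose any integer $k>8/L$, so that $4/k<L/2$.

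I do not anticipate a serious obstacle here: boundedness of $|\theta_x|$ together with positivity of $L$ is exactly what allows $\sigma_x$ to be kept uniformly bounded away from $0$, comparable from above to $\beta|\theta_x|$, and a power of a single $\sigma$, all at once. The only mild subtlety is reconciling the two bounds in~(a) with the discretization~(b), which forces $\beta\sigma\ge 1/4$; this is easily compatible with keeping $\log(\beta\sigma)$ close enough to zero to preserve the sign of the integral in~(c).
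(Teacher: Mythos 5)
Your proof is correct and takes essentially the same route as the paper: approximate $|\theta_x|$ from below by a power of a single $\sigma$ close to $1$ (the paper ties $\beta=\sigma=e^{-1/k}$ and rounds $k\log|\theta_x|$ to an integer, you keep $\beta,\sigma$ free, round $\log(\beta|\theta_x|)/\log\sigma$ by a ceiling, and choose $k$ last), then use $\int\log|\theta_x|\,d\sps>0$ together with the bound $|\theta_x|\ge\|\sA\|^{-1}$ to get (a) and (c). The only difference worth flagging is that the paper's special case $r_x=1$ forces $s_x\neq 0$, which is used implicitly right after the lemma when $\cX$ is partitioned into $\{s_x<0\}\cup\{s_x>0\}$; your ceiling can give $s_x=0$ when $1\le\beta|\theta_x|<\sigma^{-1}$, which is harmless for the statement as written, and for the sequel one can simply replace such $s_x$ by $1$ (this preserves (a)--(c) provided $\sigma$ is chosen close enough to $1$).
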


\begin{proof}
Fix $k\in\N$ large enough so that $\int \log|\theta_x|\,d\sps(x) > 7 / k$.
Define $\log\beta = \log\sigma = -1/k$. For each $x\in \cX$, define
$$
r_x = \big[k\log|\theta_x|\big],
\quad
s_x = \left\{\begin{array}{ll} r_x-1  & \text{if $r_x\neq 1$ } \\
r_x-2 & \text{if $r_x=1$}\end{array}\right.
\quad
\log\sigma_x = -\frac{s_x}{k}.
$$
Properties (a) and (b) follow immediately. Moreover,
$$
\int \log\sigma_x \, d\sps(x)\ge \int \big(\log|\theta_x|-3/k\big)\, d\sps(x)> 4/k
$$
as claimed in (c). The proof is complete.
\end{proof}

Let $\sigma$, $\beta$, $\sigma_x$, and $s_x$, be as in Lemma~\ref{l.lemmakey1}.
We partition $\cX = \cX_- \cup \cX_+$, where $\cX_-$ is the subset of $x\in\cX$ with
$s_x<0$ (i.e. $\sigma_x>1$) and $\cX_+$ is the subset of $x\in\cX$ with $s_x>0$
(i.e. $\sigma_x<1$). For each $x\in\cX$, let
\begin{equation}\label{eq.Dx}
D_x=\left(
             \begin{array}{cc}
               \sigma_x & 0 \\
               0 & \sigma_x^{-1} \\
             \end{array}\right)
\quand \hat{D}_x(z)=\sigma_x^{2}z.
\end{equation}
Consider also
\begin{equation}\label{eq.Dsp}
D_{sp}=\left(
             \begin{array}{cc}
               \sigma^{\tau} & 0 \\
               0 & \sigma^{-\tau} \\
             \end{array}\right)
\quand \hat{D}_{sp}(z)=\sigma^{2\tau}z,
\end{equation}
where $\tau$ is the smallest integer such that $\sigma^{\tau}\le \|A\|^{-1}/4$.
Given any $\cK\subset \cX$, let $\sK$ be the cocycle defined by
\begin{equation}\label{eq.Kx}
\sK_x=\left(
             \begin{array}{cc}
               k_x & 0 \\
               0 & k_x^{-1} \\
             \end{array}\right)
\quad \text{where}\quad
k_x = \left\{\begin{array}{ll} \sigma_x  & \text{if $x\in\cK$ } \\
\sigma^{\tau} & \text{if $x\in\cX\setminus\cK$.}\end{array}\right.
\end{equation}

\begin{lemma}\label{l.alpha}
There exist $\alpha>0$ and $\tilde\alpha>0$ such that, given any measurable set $\cK\subset\cX$
with $\sps(\cK)\ge 1-\alpha$,
$$
\int \log k_x \,d\sps(x)\ge 2/k
\quand\sps\big(\{x: k_x>1\}\big)\ge\tilde\alpha.
$$
\end{lemma}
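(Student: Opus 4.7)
The plan is to derive both bounds from the single integral estimate in Lemma~\ref{l.lemmakey1}(c), exploiting the fact that the numbers $\log\sigma_x$ are uniformly bounded.

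First I would establish a uniform bound $|\log\sigma_x|\le C$ for some constant $C$ independent of $x$. The upper bound comes from Lemma~\ref{l.lemmakey1}(a): $\sigma_x\le \beta|\theta_x|\le \beta\|\sA\|$ (recall $\|\sA\|$ is finite by \eqref{eq.bounded}). The lower bound comes from the same item together with the choice of $\tau$: $\sigma_x\ge \|\sA\|^{-1}/4\ge \sigma^\tau$, so $\log\sigma_x\ge -\tau/k$. Note also that $\log\sigma^\tau = -\tau/k$ is a fixed (negative) constant depending only on $\sA$.

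For the first inequality, I would split the integral using the definition \eqref{eq.Kx}:
$$
\int \log k_x\,d\sps = \int_\cK \log\sigma_x\,d\sps + \sps(\cX\setminus\cK)\log\sigma^\tau
 = \int\log\sigma_x\,d\sps - \int_{\cX\setminus\cK}\log\sigma_x\,d\sps + \sps(\cX\setminus\cK)\log\sigma^\tau.
$$
By Lemma~\ref{l.lemmakey1}(c) the first term exceeds $4/k$, while the two correction terms are each bounded in absolute value by $C\alpha$ and $(\tau/k)\alpha$ respectively, whenever $\sps(\cX\setminus\cK)\le\alpha$. Choosing $\alpha$ small enough (depending only on $C$, $\tau$, $k$) makes the total error at most $2/k$, giving the first bound.

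For the second inequality, I would combine Lemma~\ref{l.lemmakey1}(c) with the upper bound $\log\sigma_x\le C$ to force $\sps(\cX_-)$ bounded below. Since $\log\sigma_x\le 0$ on $\cX_+$,
$$
\frac{4}{k} < \int\log\sigma_x\,d\sps \le \int_{\cX_-}\log\sigma_x\,d\sps \le C\sps(\cX_-),
$$
so $\sps(\cX_-)\ge 4/(Ck)$. By \eqref{eq.Kx}, on $\cK\cap\cX_-$ we have $k_x = \sigma_x > 1$, and on $\cX\setminus\cK$ we have $k_x=\sigma^\tau\le 1$; hence $\{k_x>1\}\supset \cK\cap\cX_-$ and
$$
\sps(\{k_x>1\})\ge \sps(\cX_-)-\sps(\cX\setminus\cK) \ge \frac{4}{Ck} - \alpha.
$$
Shrinking $\alpha$ if necessary so that $\alpha\le 2/(Ck)$, I can take $\tilde\alpha = 2/(Ck)$, completing the proof.

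There is no real obstacle here; the argument is a routine two-sided estimate. The only mild point to be careful about is making the single choice of $\alpha$ work simultaneously for both assertions, which is handled by taking the minimum of the two constraints.
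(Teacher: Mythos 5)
Your proof is correct and follows essentially the same route as the paper: split $\int\log k_x\,d\sps$ into $\int\log\sigma_x\,d\sps$ plus a correction supported on $\cX\setminus\cK$, bound that correction by a constant times $\alpha$ using the uniform bounds coming from Lemma~\ref{l.lemmakey1}(a) and the choice of $\tau$, and pick $\alpha$ small enough (independently of $\cK$) to retain $2/k$ of the $4/k$ margin from item (c). The only cosmetic difference is in the second claim, which the paper deduces directly from the first inequality (essentially $2/k\le \sps(\{k_x>1\})\,\log\sup k_x$, giving $\tilde\alpha$ in terms of $\sup k_x$), whereas you re-derive it from item (c) via $\sps(\cX_-)\ge 4/(Ck)$ and the inclusion $\{k_x>1\}\supset\cK\cap\cX_-$; both are immediate one-line deductions.
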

\begin{proof}
Taking $\alpha=(-k\log\sigma^{\tau})^{-1}$, we have
$$
\begin{aligned}
\int \log k_x\,d\sps
& \ge \int \log\sigma_x\,d\sps + \int_{\cX\setminus \cK}\big(\log\sigma^{\tau}-\log\sigma_x\big) \,d\sps\\
&\ge 4/k +2\log \sigma^{\tau}\sps(\cX\setminus \cK) \ge 2/k.
\end{aligned}
$$
This proves the first claim. The second one is a direct consequence, with $\tilde\alpha=2/(k\sup k_x)$.
\end{proof}

For $z_0\in\C$ and $r\ge0$, we denote $B(z_0,r)=\{z\in\C: |z-z_0|\leq r\}$.
Given $\sB$, $\sC\in\cS(X)$ and $\cY\subset\cX$ we say that $r\ge0$ is \emph{$(\sB,\cY)$-centered}
with respect to $\sC$ if
\begin{equation}\label{eq.contractive}
\hat{B}_x^{-1}(B(0,r)) \subset \hat{C}_x^{-1}(B(0,r))
\quad\text{for every $x\in\cY$.}
\end{equation}
When $\cY=\cX$ we just say that $r$ is \emph{$\sB$-centered} with respect to $\sC$.
Given $\sB$, $\sC\in\cS(\cX)$, $\sqs\in\cP(\cX)$, and a $(\sB,\sqs)$-stationary measure $\eta$,
we say that $r\ge0$ is \emph{$(\sB,\sqs,\eta)$-targeted} with respect to $\sC$ if
\begin{equation}
\int \eta\big(\hat{B}_x^{-1}(B(0,r))\big)\,d\sqs(x)\le \int \eta\big(\hat{C}_x^{-1}(B(0,r))\big)\,d\sqs(x)
\end{equation}

\begin{remark}
If $r\ge0$ is $\sB$-centered (respectively, $(\sB,\sqs,\eta)$-targeted) with respect to $\sD$
then it is also $\sB$-centered (respectively, $(\sB,\sqs,\eta)$-targeted) with respect to the
cocycle $\sK$ defined in \eqref{eq.Kx}. That is because
$\hat{D}_x^{-1}(B(0,r))\subset\hat{D}_{sp}^{-1}(B(0,r))$ for any $x\in\cX$.
\end{remark}

The following simple facts will be useful in what follows:

\begin{lemma}\label{l.seis-a}
Given $\rho>0$ there is $\gamma>0$ such every $r\in[\rho,\rho^{-1}]$
is $\sB$-centered with respect to $\sD$ for every $\sB\in\cS(X)$ with
$d(\sA,\sB)<\gamma$.
\end{lemma}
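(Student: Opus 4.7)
The plan is to use that $\sA$ and $\sD$ are both diagonal, so that $\hat{A}_x^{-1}(B(0,r))$ and $\hat{D}_x^{-1}(B(0,r))$ are concentric Euclidean disks; to show that $\sA$-centering with respect to $\sD$ holds with a definite geometric margin, uniformly in $x$ and $r$; and then to check that this margin survives a sup-norm perturbation of $\sA$.

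For every $x\in\cX$ the linearity of $\hat{A}_x(z)=\theta_x^{2}z$ and $\hat{D}_x(z)=\sigma_x^{2}z$ gives
\[
\hat{A}_x^{-1}(B(0,r))=B(0,r|\theta_x|^{-2}) \quand \hat{D}_x^{-1}(B(0,r))=B(0,r|\sigma_x|^{-2}).
\]
By Lemma~\ref{l.lemmakey1}(a) one has $\sigma_x\le\beta|\theta_x|$ with $\beta\in(0,1)$, hence $r|\theta_x|^{-2}\le\beta^{2}\,r|\sigma_x|^{-2}$, so the first disk sits inside the second with Euclidean margin at least $(1-\beta^{2})\,r|\sigma_x|^{-2}$. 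Because \eqref{eq.bounded} and Lemma~\ref{l.lemmakey1}(a) bound $|\sigma_x|^{\pm 1}$ uniformly in $x$, and $r\in[\rho,\rho^{-1}]$, this margin admits a lower bound $\kappa=\kappa(\rho,\beta,\|\sA\|)>0$ independent of $x$ and $r$.

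Next I would argue that the set $\hat{B}_x^{-1}(B(0,r))$ is Hausdorff-close to $\hat{A}_x^{-1}(B(0,r))$ uniformly in $x\in\cX$ and $r\in[\rho,\rho^{-1}]$ when $d(\sA,\sB)$ is small. Writing $B_x=\begin{pmatrix}a_x & b_x\\ c_x & d_x\end{pmatrix}$ with $\det B_x=1$, the bound $d(\sA,\sB)<\gamma$ forces $a_x\to\theta_x$, $d_x\to\theta_x^{-1}$, and $b_x,c_x\to 0$ uniformly in $x$, so the entries of $B_x^{-1}$ converge uniformly as well. For $\gamma$ small the pole $a_x/c_x$ of $\hat{B}_x^{-1}$ sits uniformly far outside $B(0,\rho^{-1})$, so $\hat{B}_x^{-1}$ is Lipschitz on $B(0,\rho^{-1})$ with constant uniform in $x$; consequently $\hat{B}_x^{-1}(B(0,r))$ is itself a Euclidean disk, and its Hausdorff distance to $\hat{A}_x^{-1}(B(0,r))$ is majorized by a function of $\gamma$ going to $0$ with $\gamma$. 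Choosing $\gamma$ so small that this distance falls below $\kappa$ yields the required inclusion $\hat{B}_x^{-1}(B(0,r))\subset\hat{D}_x^{-1}(B(0,r))$.

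The only point requiring care is the uniformity in $x$ of the perturbation estimate; this is not supplied by compactness of $\cX$ (which is merely polish) but by the boundedness hypothesis \eqref{eq.bounded}, which makes $\sA(\cX)$ a precompact subset of $\SL(2,\CC)$, combined with the uniform nature of the distance $d(\sA,\sB)$ in \eqref{eq.distances}.
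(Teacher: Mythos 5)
Your argument is correct and is essentially the paper's: both proofs exploit the slack $\sigma_x\le\beta|\theta_x|$ from Lemma~\ref{l.lemmakey1}(a) together with the uniform bounds on $|\theta_x|^{\pm1}$ coming from \eqref{eq.bounded} to show that the inclusion defining $\sB$-centering holds with definite room to spare, uniformly in $x$ and $r\in[\rho,\rho^{-1}]$, and that this room survives a uniformly small ($O(\gamma)$) perturbation of the entries of $B_x^{-1}$. The only difference is bookkeeping: the paper runs the estimate multiplicatively, showing $|\hat B_x^{-1}(z)|\le\sigma_x^{-2}|z|$ for $|z|\in[\rho,\rho^{-1}]$, whereas you compare the concentric disks $B(0,r|\theta_x|^{-2})\subset B(0,r\sigma_x^{-2})$ with an additive margin $\kappa>0$ and absorb the uniform $O(\gamma)$ displacement of $\hat B_x^{-1}(B(0,r))$ into that margin.
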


\begin{proof}
By assumption, $\pm\log|\theta_x|$, $x\in\cX$ is bounded.
Write
$$
B_x^{-1}=\left(\begin{array}{cc} a_x & b_x \\ c_x & d_x\end{array}\right).
$$
The condition $d(\sA,\sB)<\gamma$ implies that $|a_x-\theta_x^{-1}|$,
$|b_x|$, $|c_x|$, $|d_x-\theta_x|$ are all less than $c_1 \gamma$ for
some constant $c_1$ independent of $x$ and $\gamma$.
Given $\rho>0$, assume first that $\gamma\le\rho^2$.
Then, for any $|z|\in[\rho,\rho^{-1}]$,
$$
|\hat B_x^{-1}(z)|
  \le \frac{|a_x z| + |b_x|}{|d_x| - |c_x z|}
  \le \frac{|a_x| + c_1\sqrt\gamma}{|d_x| - c_1 \sqrt\gamma}|z|
  \le \frac{|\theta_x^{-1}|}{|\theta_x|} \frac{1+ c_2 \sqrt\gamma}{1 - c_2 \sqrt\gamma}|z|
$$
where $c_2$ is also independent of $x$ and $\gamma$.
Thus, there exists $\gamma_0>0$, independent of $x\in\cX$ such that,
if $d(\sA,\sB)<\gamma\le \gamma_0$ then
$$
|\hat B_x^{-1}(z)|
\le (\beta |\theta_x|)^{-2} |z|
\le \sigma_x^{-2} |z|
= |\hat D_x^{-1}(z)|
$$
for every $x\in\cX$ and $|z|\in[\rho,\rho^{-1}]$.
This gives that every $r\in[\rho,\rho^{-1}]$ is $\sB$-centered with respect to $\sD$, as claimed.
\end{proof}

\begin{lemma}\label{l.proporcional}
There are $\gamma>0$ and $c>0$ such that if $d(\sA,\sB)<\gamma$ and $x\in X_0$ is such
that $\hat{B}_x$ has a fixed point in $B(0,\rho)$, for some $\rho<c^{-1}$, then every
$r\in[c\rho,1]$ is $(B,\{x\})$-centered with respect to $D$.
\end{lemma}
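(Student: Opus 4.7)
The plan is to prove the inclusion $\hat{B}_x^{-1}(B(0,r))\subset \hat{D}_x^{-1}(B(0,r))=B(0,\sigma_x^{-2}r)$ by a direct Möbius transformation computation, exploiting two ingredients: (i) the near-diagonal structure of $B_x$ forced by $d(\sA,\sB)<\gamma$, and (ii) the strict inequality $\sigma_x\le\beta|\theta_x|$ with $\beta<1$ supplied by Lemma~\ref{l.lemmakey1}, which provides uniform slack.

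First I would write $B_x=\begin{pmatrix}a_x & b_x\\ c_x & d_x\end{pmatrix}$ with $a_xd_x-b_xc_x=1$. The hypothesis $d(\sA,\sB)<\gamma$ together with the uniform bound $|\theta_x^{\pm 1}|\le M$ yields $|a_x-\theta_x|$, $|b_x|$, $|c_x|$, $|d_x-\theta_x^{-1}|\le C_1\gamma$ with $C_1$ independent of $x$. Since $\hat{B}_x^{-1}(w)=(d_xw-b_x)/(a_x-c_xw)$, for $w\in B(0,r)$ the triangle inequality gives $|\hat{B}_x^{-1}(w)|\le(|d_x|r+|b_x|)/(|a_x|-|c_x|r)$, and after rearranging and dividing through by $r$, the desired bound $|\hat{B}_x^{-1}(w)|\le\sigma_x^{-2}r$ reduces to
\[
\frac{|b_x|}{r}+\sigma_x^{-2}|c_x|\,r\le \sigma_x^{-2}|a_x|-|d_x|.
\]

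The key step is to control $|b_x|$ via the fixed-point hypothesis. The equation $\hat{B}_x(z_0)=z_0$ rearranges to $b_x=c_xz_0^2+(d_x-a_x)z_0$, which with $|z_0|\le\rho<c^{-1}\le 1$ gives $|b_x|\le C_2\rho$ for a uniform constant $C_2$ depending only on $M$; this is the central observation, that a fixed point near $0$ forces $b_x$ to be of order $\rho$ rather than merely of order $\gamma$. Thus $|b_x|/r\le C_2/c$ using $r\ge c\rho$, while $\sigma_x^{-2}|c_x|\,r\le\sigma^{-2\tau}C_1\gamma$ using $r\le 1$ and the uniform bound $\sigma_x^{-1}\le\sigma^{-\tau}$. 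On the right-hand side, the inequality $\sigma_x\le\beta|\theta_x|$ yields $\sigma_x^{-2}|a_x|-|d_x|\ge(\beta^{-2}-1)|\theta_x|^{-1}-O(\gamma)\ge(\beta^{-2}-1)/(2M)$ once $\gamma$ is small. Choosing $c$ large and $\gamma$ small so that $C_2/c$ and $\sigma^{-2\tau}C_1\gamma$ are each at most $(\beta^{-2}-1)/(4M)$ then closes the inequality uniformly in $x$.

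The main obstacle I anticipate is ensuring every constant can be chosen uniformly in $x$, which ultimately reduces to the uniform bounds on $|\theta_x^{\pm 1}|$ guaranteed by $\sA\in\cS(\cX)$. A secondary subtlety is the order in which $c$ and $\gamma$ are selected: since $\rho$ ranges freely in $(0,c^{-1})$, the $\sigma_x^{-2}|c_x|r^2$ term cannot be absorbed via $\rho$, which is why I divide the main inequality by $r$ and use $r\le 1$ to bound $r^2$ by $r$, so that $\gamma$ alone controls the second left-hand term.
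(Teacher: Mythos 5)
Your proof is correct, and the quantifiers are handled in the right order: $c$ depends only on $\beta$ and the uniform bound on $|\theta_x^{\pm 1}|$, $\gamma$ is chosen afterwards, and nothing depends on $\rho$, $r$ or $x$. The route differs from the paper's in execution rather than substance. The paper argues coordinate-free: for small $\gamma$ each $\hat B_x^{-1}$, $x\in X_0$, is a $\lambda_x$-contraction of $B(0,1)$ with $\lambda_x\le\beta\Lambda_x$, where $\hat D_x^{-1}(z)=\Lambda_x z$; since its fixed point $z_0$ lies within $\rho\le r/c$ of the origin, $|\hat B_x^{-1}(z)|\le |z_0|+\lambda_x|z-z_0|\le\bigl[c^{-1}+\lambda_x(1+c^{-1})\bigr]r\le\Lambda_x r$, once $c$ satisfies $\lambda^{-1}<c(\beta^{-1}-1-c^{-1})$. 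You instead convert the fixed-point hypothesis into the coefficient bound $|b_x|=|c_xz_0^2+(d_x-a_x)z_0|\le C_2\rho$ and then verify the inclusion by a direct estimate of the M\"obius image of the disk; your use of $\sigma_x\le\beta|\theta_x|$ plays exactly the role of $\lambda_x\le\beta\Lambda_x$ in the paper, and your division by $r$ with $r\le1$ correctly prevents any circularity between $c$ and $\gamma$. So both proofs rest on the same two sources of slack (the fixed point within $c^{-1}r$ of $0$, and the factor $\beta<1$ from Lemma~\ref{l.lemmakey1}); the paper's version is shorter, yours is more explicit and, as a minor bonus, never invokes the contraction property, so $x\in X_0$ enters only through the uniform bounds. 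Two small points to record for completeness: the reduction to your displayed inequality presupposes $|a_x|-|c_x|r>0$, which is immediate for small $\gamma$ since $|c_x|\le C_1\gamma$ and $|a_x|\ge|\theta_x|-C_1\gamma$; and the bound $\sigma_x^{-1}\le\sigma^{-\tau}$ you use follows from Lemma~\ref{l.lemmakey1}(a) combined with the definition of $\tau$ in \eqref{eq.Dsp}.
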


\begin{proof}
First, take $\gamma>0$ such that $d(A,B)<\gamma$ implies that $\hat{B}_x^{-1}$ is a
$\lambda_x$-contraction with
$$
\frac{\|\sA\|^{-1}}{2}=\lambda\le\lambda_x \le (1+\rho_0)^{-2}
$$
and $\hat{D}_x^{-1}(z)=\Lambda_x z$ with $\lambda_x \le \beta\Lambda_x$.
Then choose $c>0$ large enough so that $\lambda^{-1} < c(\beta^{-1}-1-c^{-1})$.
It follows that
$$
|\hat{B}_x^{-1}(z)|\le [c^{-1}+\lambda_x(1+c^{-1})]r \le \Lambda_x r
$$
whenever $|z|\le r$ and $r\in[c\rho,1]$. In other words,
$$
\hat{B}_x^{-1}(B(0,r))\subset\hat{D}_x^{-1}(B(0,r)) \quad \text{for all } r\in[c\rho,1],
$$
as claimed. This proves the lemma.
\end{proof}

The proof of Proposition~\ref{keyprop2} relies on a couple of technical results, Propositions~\ref{p.oito}
and~\ref{p.oitenta}, that we state in the sequel and whose proofs will appear in Section~\ref{s.estimates}.
The first proposition gives a bound on the mass of the stationary measure away from the vertical
(and the horizontal) direction. Fix $\alpha>0$ as in Lemma~\ref{l.alpha}, once and for all.

\begin{proposition}\label{p.oito}
Given $\vep>0$ and $\delta>0$ there exists $\gamma>0$ such that if $d(\sA,\sB)<\gamma$
and $d(\sps,\sqs)<\gamma$ then
$$
\eta\big(B(0,\vep^{-1}) \setminus B(0,r_0)\big) \le \delta
$$
for any $(\sB,\sqs)$-stationary measure $\eta$ and any $r_0\in(0,1)$ such that every
$r\in [r_0,\vep^{-1}]$ is $(\sB,\cK)$-centered with respect to $\sD$ for some measurable
set $\cK$ with $\sps(\cK)\ge 1-\alpha$.
\end{proposition}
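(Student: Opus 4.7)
\emph{Plan.} The strategy is to combine stationarity of $\eta$ with the centering hypothesis to derive a functional inequality for $F(r):=\eta(B(0,r))$ on $[r_0,\vep^{-1}]$, and then iterate it as a one-dimensional random walk whose logarithmic drift is strictly negative thanks to Lemma~\ref{l.alpha}. First, I would argue that for $\gamma$ sufficiently small, the hypothesis together with the Remark on the auxiliary cocycle $\sK$ yields $\hat B_x^{-1}(B(0,r))\subset\hat K_x^{-1}(B(0,r))=B(0,k_x^{-2}r)$ for every $x\in\cK$, while for $x\in\cX\setminus\cK$ the same inclusion follows from a direct perturbation estimate: since $k_x=\sigma^{\tau}\le\|\sA\|^{-1}/4$ and $d(\sA,\sB)<\gamma$ is small, $\hat B_x^{-1}$ cannot stretch a ball of radius $r\in[r_0,\vep^{-1}]$ beyond $B(0,\sigma^{-2\tau}r)$. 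Stationarity of $\eta$ will then give the key inequality
\[
F(r)=\int\eta(\hat B_x^{-1}(B(0,r)))\,d\sqs(x)\le \int F(k_x^{-2}r)\,d\sqs(x)\quad\text{for all } r\in[r_0,\vep^{-1}].
\]

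Next I would iterate this inequality as a submartingale. Lemma~\ref{l.alpha} together with $d(\sps,\sqs)<\gamma$ small gives $\int\log k_x\,d\sqs>1/k>0$, so the log random walk $\log Z_n:=\log r-2\sum_{i=1}^n\log k_{x_i}$ (with $x_i$ i.i.d.\ under $\sqs$) has strictly negative drift. Starting from $Z_0=\vep^{-1}$ and letting $T$ be the first exit time of $Z_n$ from $[r_0,\vep^{-1}]$, iterated application of the inequality up to time $T$ will yield $F(\vep^{-1})\le E[F(Z_T)]$. Splitting by the side of exit and using $F(Z_T)\le 1$ when $Z_T>\vep^{-1}$ and $F(Z_T)\le F(r_0)$ when $Z_T\le r_0$ then collapses this to
\[
F(\vep^{-1})-F(r_0)\le P(Z_T>\vep^{-1})(1-F(r_0))\le P(Z_T>\vep^{-1}).
\]

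The hard step will be to bound $P(Z_T>\vep^{-1})$. Starting the walk exactly \emph{at} the upper boundary, a single upward jump triggers immediate exit, so a naive estimate is trivial. The fix is to enlarge the centering interval via Lemma~\ref{l.seis-a}: for $\gamma$ sufficiently small there exists $\rho>0$ with $\rho\ll\vep$ such that every $r\in[\rho,\rho^{-1}]$ is $\sB$-centered with respect to $\sD$ for \emph{every} $x\in\cX$. Hence the key inequality in fact holds on the larger interval $[\rho,\rho^{-1}]$, and I would redefine $T$ as the exit time from this enlarged interval, with $\vep^{-1}$ now strictly interior. A standard Chernoff-Cram\'er estimate, using the positive root $\lambda^*>0$ of $\int k_x^{2\lambda^*}\,d\sqs=1$ (which exists by convexity of the Laplace transform combined with the negative drift established above), will give
\[
P(Z_T>\rho^{-1}\mid Z_0=\vep^{-1})\le (\rho/\vep)^{\lambda^*},
\]
which is $<\delta$ once $\rho$ is taken small, i.e.\ once $\gamma$ is small. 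A brief bookkeeping argument, using monotonicity of $F$ (and, in the case $r_0<\rho$, iterating the inequality also on the sub-annulus $[r_0,\rho]$ while absorbing the $\sps(\cX\setminus\cK)\le\alpha$ contribution per step), should then upgrade this to $F(\vep^{-1})-F(r_0)<\delta$. The principal obstacle throughout is precisely this boundary effect at the upper endpoint of the iteration interval; the extension of the centering range supplied by Lemma~\ref{l.seis-a} is what brings the exponential Cram\'er bound into play.
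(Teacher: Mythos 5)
There is a genuine gap, and it sits exactly where the real difficulty of the proposition lies: the set $\cX\setminus\cK$. Your opening step claims that for $x\in\cX\setminus\cK$ the inclusion $\hat B_x^{-1}(B(0,r))\subset B(0,\sigma^{-2\tau}r)$ follows from a ``direct perturbation estimate''. It does not. The quantifiers in the proposition fix $\gamma$ \emph{before} $r_0$, and $r_0\in(0,1)$ may be far smaller than $\gamma$; for such radii $\hat B_x^{-1}$ is approximately $z\mapsto\theta_x^{-2}z+b$ with $|b|$ of order $\gamma$, so $\hat B_x^{-1}(B(0,r))$ can be a small disk sitting at distance $\approx\gamma\gg\sigma^{-2\tau}r$ from the origin, and the inclusion fails. (Indeed Lemma~\ref{l.oitenta1} only gives the inclusion \emph{when} $\hat B_x^{-1}(B(0,r))$ meets $B(0,r)$; the problematic case is precisely when it does not.) Consequently your functional inequality $F(r)\le\int F(k_x^{-2}r)\,d\sqs$ is not available on $[r_0,\vep^{-1}]$: the best unconditional bound is $F(r)\le\int_\cK F(k_x^{-2}r)\,d\sqs+\sqs(\cX\setminus\cK)$, and $\sqs(\cX\setminus\cK)$ can be of order $\alpha$, a constant fixed in Lemma~\ref{l.alpha} independently of $\delta$. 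So the parenthetical fix of ``absorbing the $\alpha$ contribution per step'' cannot work: a single step already costs more than $\delta$, and the number of steps needed to descend from $\vep^{-1}$ (or from your $\rho$) to an arbitrary $r_0$ is unbounded. The enlargement via Lemma~\ref{l.seis-a} only helps on a compact annulus of radii fixed before $\gamma$, i.e.\ at the \emph{upper} end (this part of your plan is sound and in fact parallels Step~1 of the paper's proof, where $\sB$-centering on radii $\ge R>\sigma^{-2n}\vep^{-1}$ is obtained the same way); it gives nothing for radii below any fixed $\rho$.

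The paper's proof is built around a device you are missing: it replaces the pointwise inclusion by the weaker, integrated ``$(\sB,\sqs,\eta)$-targeted'' property, which \emph{is} implied by $(\sB,\cK)$-centering on $\cK$ together with the trivial estimate on $\cX\setminus\cK$ only when it holds --- and then runs a dichotomy. If every radius from (just above) $\vep^{-1}$ down to $r_0$ is targeted, the drift is exploited not by a Chernoff bound but deterministically: Lemma~\ref{l.consequencias} converts stationarity into annulus-by-annulus flux inequalities, Lemma~\ref{l.sete} sums them, and Corollary~\ref{c.sete} (a pigeonhole over $s_0$ far-out annuli) produces a thin annulus beyond $\vep^{-1}$ of $\eta$-mass $O(\delta)$, which the summation lemma then transports down to $r_0$. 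If instead targeting first fails at some $r_1>r_0$, the failure pins down a specific $x\in\cX\setminus\cK$ with $\eta(\hat B_x^{-1}(B(0,r_1)))>\eta(\hat D_{sp}^{-1}(B(0,r_1)))$; Lemma~\ref{l.oitenta1} then forces $\hat B_x^{-1}(B(0,r_1))$ to be disjoint from $B(0,r_1)$, and stationarity turns the already-established bound on $\eta(B(0,\vep^{-1})\setminus B(0,r_1))$ into a bound on $\eta(B(0,r_1))$ itself, giving the conclusion outright. Without some substitute for this targeted/disjointness dichotomy, the martingale--Chernoff scheme you propose cannot reach radii below $\gamma$, and hence cannot prove the statement as quantified.
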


What we actually use is the following consequence:

\begin{corollary}\label{c.oito}
Given $\vep>0$ and $\delta>0$ there exist $\gamma>0$ such that if $d(\sA,\sB)<\gamma$
and $d(\sps,\sqs)<\gamma$ then either
$
\eta\big(B(0,\vep^{-1})\big) \le \delta
$
or there exists $r_0 \in (0,1)$ such that
$$
\eta\big(B(0,\vep^{-1}) \setminus B(0,r_0)\big) \le \delta
$$
and $\sps(\{x\in \cX: \hat{B}_x^{-1}(B(0,r_0))\nsubset \hat{D}_x^{-1}(B(0,r_0)) \})\ge \alpha$.
\end{corollary}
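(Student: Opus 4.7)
The plan is to reduce the corollary to Proposition~\ref{p.oito} via a dichotomy on the ``defect set''
\[
\cB_r := \{x \in \cX : \hat{B}_x^{-1}(B(0,r)) \nsubset \hat{D}_x^{-1}(B(0,r))\},
\]
since, with the choice $\cK_r = \cX \setminus \cB_r$, the hypothesis of Proposition~\ref{p.oito} on a range $[r_0,\vep^{-1}]$ is precisely $\sps(\cB_r) \le \alpha$ for every $r$ in that range. First I shrink $\gamma$ so that Lemma~\ref{l.seis-a} (applied with $\rho = \min(\vep,1/2)$) forces $\cB_r = \emptyset$ throughout $[\rho,\vep^{-1}]$ --- ensuring that any $r_0$ produced below automatically lies in $(0,\rho) \subset (0,1)$ --- and so that Proposition~\ref{p.oito} delivers tolerance $\delta/2$.

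If $\sps(\cB_r) < \alpha$ for every $r \in (0,\vep^{-1}]$, then Proposition~\ref{p.oito} applies at each $r_0 > 0$ and yields $\eta(B(0,\vep^{-1}) \setminus B(0,r_0)) \le \delta/2$. Letting $r_0 \downarrow 0$ and using that the standing hypothesis of Proposition~\ref{keyprop2} makes Lemma~\ref{l.nonatomicmeasure} available (so $\eta(\{0\}) = 0$), we obtain $\eta(B(0,\vep^{-1})) \le \delta$, which is the first alternative. Otherwise, set
\[
r_0 := \sup\{r \in (0,\vep^{-1}] : \sps(\cB_r) \ge \alpha\} \in (0,\rho).
\]
For every $r' > r_0$ we have $\sps(\cB_{r'}) < \alpha$, so Proposition~\ref{p.oito} applied at $r'$ yields $\eta(B(0,\vep^{-1}) \setminus B(0,r')) \le \delta/2$. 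Letting $r' \downarrow r_0$ and invoking continuity from above of the finite measure $\eta$ on the shrinking closed disks $B(0,r')$, we conclude $\eta(B(0,\vep^{-1}) \setminus B(0,r_0)) \le \delta$.

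The main obstacle is verifying the attainment $\sps(\cB_{r_0}) \ge \alpha$, i.e., that the supremum defining $r_0$ is achieved. The inequality defining $\cB_r$, namely $\sup\{|z| : |\hat{B}_x(z)| \le r\} > \sigma_x^{-2} r$, is strict, so $r \mapsto \sps(\cB_r)$ is only lower semi-continuous --- the wrong direction for attainment. My plan is to switch to the closed analogue
\[
\tilde{\cB}_r := \{x \in \cX : \sup\{|z| : |\hat{B}_x(z)| \le r\} \ge \sigma_x^{-2} r\},
\]
which is upper semi-continuous in $r$: reverse Fatou along a sequence $r_n \uparrow r_0$ drawn from the defining set of the supremum yields $\sps(\tilde{\cB}_{r_0}) \ge \alpha$, and then a small downward perturbation of $r_0$ converts this into the required strict inclusion while leaving the mass bound intact by continuity of $\eta$. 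This semi-continuity handling at the threshold is the only delicate point; every other ingredient is a direct consequence of Proposition~\ref{p.oito} together with standard continuity of finite measures.
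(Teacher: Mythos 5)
Most of your reduction is sound and matches the paper's: the hypothesis of Proposition~\ref{p.oito} on $[r_0,\vep^{-1}]$ is indeed equivalent to $\sps(\cB_r)\le\alpha$ there, the first alternative follows from Proposition~\ref{p.oito} plus non-atomicity (Lemma~\ref{l.nonatomicmeasure}), and $\eta(B(0,\vep^{-1})\setminus B(0,r_0))\le\delta$ follows by letting $r'\downarrow r_0$. The genuine gap is the attainment step, and your proposed repair does not close it, for two concrete reasons. (i) Membership in the strict defect set $\cB_r$ is stable when $r$ is slightly \emph{increased} (if $\hat B_x^{-1}(B(0,r))$ sticks out of $B(0,\sigma_x^{-2}r)$, the same holds for slightly larger $r$, by monotonicity of $r\mapsto \hat B_x^{-1}(B(0,r))$), but not when $r$ is decreased: from $x\in\tilde{\cB}_{r_0}$ you cannot conclude $x\in\cB_{r_0'}$ for any fixed $r_0'<r_0$ --- for instance when $\hat B_x^{-1}(B(0,r))=B(0,\sigma_x^{-2}r)$ for all $r$, or when the radius of $\hat B_x^{-1}(B(0,r))$ grows faster than $\sigma_x^{-2}r$ through the tangency at $r_0$ --- and even in favourable cases the admissible size of the downward perturbation depends on $x$, while you have only the non-strict bound $\sps(\tilde{\cB}_{r_0})\ge\alpha$, with no strict margin to absorb any loss. (ii) ``Leaving the mass bound intact by continuity of $\eta$'' fails: $r\mapsto\eta(B(0,\vep^{-1})\setminus B(0,r))$ is right-continuous but not left-continuous, and a non-atomic $\eta$ may still charge the circle $\{|z|=r_0\}$; if that circle carries mass larger than your $\delta/2$ slack, every radius strictly below $r_0$ violates the $\delta$-bound. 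Note also that below $r_0$ the defect-mass condition was never the problem (your radii $r_n$ already satisfy $\sps(\cB_{r_n})\ge\alpha$); what you lose there is the $\eta$-bound, so the detour through $\tilde{\cB}$ attacks the wrong obstruction.

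The paper sidesteps all of this by placing the threshold on the other quantity: it sets $r_1=\inf\{r\in(0,1):\eta(B(0,\vep^{-1})\setminus B(0,r))<\delta\}$, so the $\eta$-bound holds automatically at every radius above $r_1$ (and the case $r_1=0$ gives the first alternative via non-atomicity); Proposition~\ref{p.oito} then forces the defect set at $r_1$ to have $\sps$-measure strictly greater than $\alpha$; finally it takes $r_k\downarrow r_1$ and uses $\cB_{r_1}\subset\liminf_k\cB_{r_k}$ --- the direction in which the strict non-inclusion \emph{is} stable --- together with Fatou to produce some $r_N>r_1$ with $\sps(\cB_{r_N})\ge\alpha$, the bound $\eta(B(0,\vep^{-1})\setminus B(0,r_N))<\delta$ being free from the definition of $r_1$. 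The strict inequality $>\alpha$ at the threshold is exactly the margin that survives the limiting argument, and it is what your scheme lacks. If you want to keep your structure, swap the roles: define the threshold via the $\eta$-mass of the annuli and perturb upwards, rather than defining it via the defect mass and perturbing downwards.
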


\begin{proof}
Let $r_1\ge 0$ be the infimum of all $r\in(0,1)$ such that
$$
\eta\big(B(0,\vep^{-1})\setminus B(0,r)\big) < \delta.
$$
If $r_1=0$ then $\eta\big(B(0,\vep^{-1})\setminus \{0\}\big) \le \delta$.
Since $\eta$ has no atoms, by Lemma~\ref{l.nonatomicmeasure}, it follows that
$\eta(B(0,\vep^{-1})) \le \delta$. This proves the corollary in this case.
Now, suppose $r_1>0$. Then, $\eta(B(0,\vep^{-1})\setminus B(0,r_1)) \ge \delta$,
and so, by Proposition~\ref{p.oito},
$$
A_1=\{x\in \cX: \hat{B}_x^{-1}(B(0,r_1))\nsubset\hat{D}_x^{-1}(B(0,r_1)) \}
$$
has $\sps(A_1)>\alpha$. Let $1>r_2>r_3>\dots$ be a decreasing sequence converging to $r_1$, and
$$
A_k=\{x\in \cX: \hat{B}_x^{-1}(B(0,r_k))\nsubset\hat{D}_x^{-1}(B(0,r_k)) \},
$$
for $k=2, 3, \dots$. Notice that $\liminf_k A_k \supset A_1$ and so, by the Lemma of Fatou,
$\liminf_k\sps(A_k) \ge \sps(A_1)>\alpha$.
In particular, there is $N\ge 2$ such that $\sps(A_N))\ge \alpha$.
The proof is complete, taking $r_0=r_N$.
\end{proof}

Our second technical proposition will allow us to bound the mass of the stationary measure
close to the vertical direction:

\begin{proposition}\label{p.oitenta}
There are $\gamma>0$ and $N\ge 1$ such that if $d(\sA,\sB)<\gamma$ and $r_0\in[0,1]$
and $x\in\cX$ are such that $\hat{B}_x^{-1}(B(0,r_0))\nsubset \hat{D}_x^{-1}(B(0,r_0))$,
then
$$
\cD \cap \hat{B}_x^{-1} (\cD)=\emptyset,\quad\text{where }
 \cD = \left\{\begin{array}{ll} \hat{D}_x^{- N}(B(0,r_0)) & \text{if } x \in \cX_- \\
                                \hat{D}_x^{N}(B(0,r_0)) & \text{if } x \in \cX_+.
                   \end{array}\right.
$$
In particular, $B(0,\sigma^{2N\tau}r_0) \cap \hat{B}_x^{-1} (B(0,\sigma^{2N\tau}r_0))=\emptyset$.
\end{proposition}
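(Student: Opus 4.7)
The plan is to exploit the fact that $\hat B_x^{-1}$ is a small M\"obius perturbation of the dilation $\hat A_x^{-1}(z)=\theta_x^{-2}z$, hence its action on a ball around the origin is well approximated by a linearization at a nearby fixed point. Writing $B_x=\bigl(\begin{smallmatrix} a & b \\ c & d\end{smallmatrix}\bigr)$ with $ad-bc=1$ and entries within $\gamma$ of those of $A_x$, the fixed-point equation $cz^2+(d-a)z-b=0$ has one root $p$ close to the origin and another near $\infty$; the multiplier of $\hat B_x^{-1}$ at $p$ is $\lambda=(a-cp)^{-2}=\theta_x^{-2}(1+O(\gamma))$. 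Conjugating $\hat B_x^{-1}$ to $w\mapsto\lambda w$ via the standard M\"obius map sending the fixed points to $0$ and $\infty$, and expanding to first order on $B(0,1)$ (valid because the other fixed point lies at distance $\gtrsim|\theta_x|/\gamma$), one obtains the linearization
\[
\hat B_x^{-1}(z)=p(1-\lambda)+\lambda z+O(\gamma),\qquad z\in B(0,1).
\]

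The hypothesis says that $\hat B_x^{-1}$ sends some point of $B(0,r_0)$ outside $\hat D_x^{-1}(B(0,r_0))=B(0,\sigma_x^{-2}r_0)$. By the linearization, the farthest point of $\hat B_x^{-1}(B(0,r_0))$ from the origin has modulus $|p||1-\lambda|+|\lambda|r_0+O(\gamma)$, so the hypothesis forces $|p||1-\lambda|\ge(\sigma_x^{-2}-|\lambda|)r_0-O(\gamma)$. Combining with the key estimate $|\lambda|\le\beta^{2}\sigma_x^{-2}$, which follows from $\sigma_x\le\beta|\theta_x|$, yields a lower bound
\[
|p|\,|1-\lambda|\ge c_\beta\,\sigma_x^{-2}\,r_0
\]
for some positive $c_\beta=c_\beta(\beta)$, provided $\gamma$ is small enough. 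Next, applying the linearization with $\cD$ in place of $B(0,r_0)$ shows $\hat B_x^{-1}(\cD)$ lies in a disk of radius $|\lambda|\operatorname{rad}(\cD)+O(\gamma)$ centered at $p(1-\lambda)$. Disjointness $\cD\cap\hat B_x^{-1}(\cD)=\emptyset$ then amounts to $|p||1-\lambda|>(1+|\lambda|)\operatorname{rad}(\cD)$, which via the lower bound on $|p|$ becomes a uniform inequality $c_\beta\sigma_x^{-2}(1+|\lambda|)^{-1}>\operatorname{rad}(\cD)/r_0$. Since $\operatorname{rad}(\cD)/r_0=\sigma_x^{-2N}\le\sigma^{2N}$ for $x\in\cX_-$ (using $\sigma_x\ge\sigma^{-1}$) and $\operatorname{rad}(\cD)/r_0=\sigma_x^{2N}\le\sigma^{2N}$ for $x\in\cX_+$ (using $\sigma_x\le\sigma$), while the left-hand side is uniformly bounded below by a positive constant depending only on $\|\sA\|,\beta$, the inequality holds for $N$ chosen large enough in terms of $\|\sA\|,\sigma,\beta$. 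The ``in particular'' statement follows because $B(0,\sigma^{2N\tau}r_0)\subset\cD$ in both cases, thanks to the bounds $\sigma^{\tau}\le\|\sA\|^{-1}/4\le\sigma_x\le\|\sA\|\le 4\sigma^{-\tau}$.

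The main technical difficulty is the uniform bookkeeping of error terms: the linearization error, the constant $c_\beta$ extracted from the hypothesis, and the bounded factor $(1+|\lambda|)^{-1}$ must all be controlled independently of $x\in\cX$ by quantities depending only on $\|\sA\|,\sigma,\beta$. Given the explicit M\"obius formulas and the uniform boundedness of $|\theta_x|^{\pm 1}$ by $\|\sA\|$, this is elementary but lengthy, and is presumably carried out in detail in Section~\ref{s.estimates}.
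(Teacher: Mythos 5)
Your overall mechanism is the same as the paper's: the failure of the inclusion $\hat{B}_x^{-1}(B(0,r_0))\subset\hat{D}_x^{-1}(B(0,r_0))$ forces the ``offset'' of $\hat{B}_x^{-1}$ to be at least a definite multiple of $\sigma_x^{-2}r_0$, and after $N$ further contractions the ball is too small to meet its image; the paper packages exactly this dichotomy in Lemmas~\ref{l.aproximacao} and~\ref{l.aproximacao2} and applies them separately on $\cX_-$ and $\cX_+$. Your reduction of the ``in particular'' clause to $B(0,\sigma^{2N\tau}r_0)\subset\cD$ via Lemma~\ref{l.lemmakey1}(a) and the definition of $\tau$ is also correct. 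The genuine gap is in the error bookkeeping, and it is not a detail that can be deferred: you linearize $\hat{B}_x^{-1}$ with an \emph{additive} error $O(\gamma)$ uniform over $B(0,1)$, and then deduce from the hypothesis that $|p|\,|1-\lambda|\ge(\sigma_x^{-2}-|\lambda|)r_0-O(\gamma)$, hence $|p|\,|1-\lambda|\ge c_\beta\sigma_x^{-2}r_0$ ``provided $\gamma$ is small enough''. Since $\gamma$ must be fixed before $r_0$, and $r_0\in[0,1]$ is arbitrary (in the application to Proposition~\ref{keyprop2} it is typically extremely small), the $O(\gamma)$ term dominates $\sigma_x^{-2}r_0$ whenever $r_0\lesssim\gamma$, no matter how small $\gamma$ is; the same non-homogeneity invalidates your disjointness step, where you compare $|p|\,|1-\lambda|$ with $(1+|\lambda|)\operatorname{rad}(\cD)$ up to $O(\gamma)$ while $\operatorname{rad}(\cD)\le\sigma^{2N}r_0$ may itself be far smaller than $\gamma$. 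So, as written, the argument does not prove the statement in the small-$r_0$ regime, which is precisely the regime that matters.

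What is needed, and what the paper actually does, is scale-homogeneous control. Writing $\hat{B}_x^{-1}(z)=(d_xz-b_x)/(-c_xz+a_x)$, one argues on the exact coefficients: if $b_x=0$ then $\hat{B}_x^{-1}(B(0,r))\subset\hat{D}_x^{-1}(B(0,r))$ for \emph{every} $r\le 1$, so the hypothesis is vacuous; if $b_x\neq0$, the hypothesis forces $r_0\le\const\,|b_x|\,\sigma_x^{2}$ with no additive error, and then $N$ contractions place $\cD$ inside a ball of radius comparable to $|b_x|/10$ centered at the origin, while $\hat{B}_x^{-1}(\cD)$ lies in a ball of comparable radius centered near $\hat{B}_x^{-1}(0)=-b_x/a_x$, whence the disjointness. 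In your language this amounts to replacing every additive $O(\gamma)$ by a relative error $O(\gamma r)$ (the deviation of the M\"obius map from its affine part is quadratic) and treating separately the cases $|p|\ge r_0$ and $|p|<r_0$; without such a correction the key lower bound $|p|\,|1-\lambda|\ge c_\beta\sigma_x^{-2}r_0$ simply does not follow from your stated estimates.
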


\subsection{Proof of Proposition \ref{keyprop2}}

The assumption $\lambda_+(\sA,\sps)>0$ implies that there exist $\alpha_0>0$ and $\rho_0>0$
such that
$$
X_0=\{x\in\cX: |\theta_x|>1+\rho_0\}
$$
has $\sps(X_0)\ge \alpha_0$. Let $c>0$ and $N\ge 1$ be fixed as in Lemma~\ref{l.proporcional}
and Proposition~\ref{p.oitenta}, respectively.
Denote $\beta_0={2\alpha_0}/{(1+8c^2\sigma^{-4\tau N})}$.
For each $z\in \C$ and $\rho\in[0,1)$, define
$$
\Gamma(z,\rho)=\{x\in X_0: \text{$\hat{B}_x$ has some fixed point in $B(z,\rho)$}\}.
$$
In particular, $\Gamma(z,0)$ is the set of $x\in X_0$ such that $z$ is fixed under $\hat B_x$.
Observe also that $\Gamma(z,0)=\cap_{\rho>0}\Gamma(z,\rho)$.

\begin{lemma}\label{l.aaa}
Given $\rho>0$ there exist $\gamma>0$ and $\lambda_0 \in(0,1)$ such that if $d(\sA,\sB)<\gamma$
then $\hat{B}_x^{-1}(B(0,r))\subset B(0,\lambda_0 r)$ for every $1>r \ge c\rho$ and every
$x\in \Gamma(0,\rho)$. In particular, there is $\kappa\ge 1$ such that
$\hat{B}_x^{-\kappa}(B(0,r))\subset B(0,\sigma^{2\tau}r)$ for $1>r\ge c\rho\sigma^{-2\tau}$
and $x\in \Gamma(0,\rho)$.
\end{lemma}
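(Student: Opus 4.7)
The plan is to adapt the one-step estimate from the proof of Lemma~\ref{l.proporcional}, but now comparing $\hat{B}_x^{-1}$ with a fixed linear contraction $z\mapsto\lambda_0 z$ that is uniform in $x$, and then iterate this one-step estimate to obtain the ``in particular'' assertion.

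For the first claim I would first shrink $\gamma$ so that, for every $x\in X_0$, the M\"obius map $\hat{B}_x^{-1}$ is a Lipschitz contraction of $B(0,1)$ with ratio $\lambda_x\le(1+\rho_0)^{-2}$; this bound was already established at the start of the proof of Lemma~\ref{l.proporcional}. The hypothesis $x\in\Gamma(0,\rho)$ provides a fixed point $z_x\in B(0,\rho)$ of $\hat{B}_x^{-1}$, so for $|z|\le r<1$ the triangle inequality gives
$$
|\hat{B}_x^{-1}(z)|\le |z_x|+\lambda_x|z-z_x|\le(1+\lambda_x)\rho+\lambda_x r,
$$
and the constraint $r\ge c\rho$ turns this into $\bigl[\lambda_x+c^{-1}(1+\lambda_x)\bigr]\,r$. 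Setting $\lambda_0=(1+\rho_0)^{-2}+c^{-1}\bigl(1+(1+\rho_0)^{-2}\bigr)$ then delivers the desired inclusion, provided $\lambda_0<1$. That condition is just $c>\bigl[1+(1+\rho_0)^{-2}\bigr]/\bigl[1-(1+\rho_0)^{-2}\bigr]$, which is comfortably satisfied by the $c$ fixed in Lemma~\ref{l.proporcional} (whose size is at least of order $k$), and can, if necessary, be enforced by a one-time enlargement of the $k$ appearing in Lemma~\ref{l.lemmakey1}.

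For the ``in particular'' part I would take $\kappa$ to be the least positive integer with $\lambda_0^\kappa\le\sigma^{2\tau}$ and iterate: if $r\in[c\rho\sigma^{-2\tau},1)$, an easy induction yields $\hat{B}_x^{-j}(B(0,r))\subset B(0,\lambda_0^j r)$ for $j=1,\dots,\kappa$, as long as each intermediate radius $\lambda_0^j r$ stays in $[c\rho,1)$. The upper bound is automatic since $r<1$ and $\lambda_0<1$, while the lower bound at the critical step uses the minimality of $\kappa$:
$$
\lambda_0^{\kappa-1}\,r\ge\lambda_0^{\kappa-1}\cdot c\rho\sigma^{-2\tau}>\sigma^{2\tau}\cdot c\rho\sigma^{-2\tau}=c\rho.
$$
After $\kappa$ applications the radius shrinks to $\lambda_0^\kappa r\le\sigma^{2\tau}r$, which is the stated inclusion. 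The only place in this argument where any care is required is in securing $\lambda_0<1$ with the $c$ already fixed upstream; this is bookkeeping that leaves all earlier estimates intact.
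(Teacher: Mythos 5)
Your proof is correct and follows essentially the same route as the paper's: the same one-step estimate $|\hat B_x^{-1}(z)|\le |z_x|+\lambda_x|z-z_x|\le[\lambda_x+c^{-1}(1+\lambda_x)]r$ via the fixed point in $B(0,\rho)$ and the uniform contraction on $B(0,1)$, with the identical $\lambda_0$ (the paper writes it as $\lambda(1+c^{-1})+c^{-1}$), and the same choice of $\kappa$ as the least integer with $\lambda_0^{\kappa}\le\sigma^{2\tau}$. Your extra checks — that $\lambda_0<1$ only needs $c$ large enough (which is free, since $c$ is only bounded from below upstream) and that the intermediate radii $\lambda_0^{j}r$ stay above $c\rho$ by minimality of $\kappa$ — merely make explicit what the paper leaves implicit.
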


\begin{proof}
Take $\gamma>0$ small enough to ensure that every $\hat{B}_x^{-1}$, $x\in X_0$ is a contraction
on the ball $B(0,1)$, with uniform contraction rate $\lambda\in(0,1)$. Then, consider
$\lambda_0=\lambda(1+c^{-1}) + c^{-1}$. Fix $x\in X_0$ and let $z_0\in B(0,1)$ be the unique
fixed point of $\hat{B}_x^{-1}$. For any $z$ with $|z|=r \ge c\rho$,
$$
|\hat{B}_x^{-1}(z)|
\le \rho + \lambda|z-z_0|
\le \rho + \lambda(r+\rho)
\le [c^{-1}+\lambda(1+c^{-1})]r
\le \lambda_0 r.
$$
This proves the first claim in the statement. To get the second statement, just take $\kappa\ge 1$
to be the smallest positive integer such that $\lambda_0^{\kappa} \le \sigma^{2\tau}$.
\end{proof}

We distinguish two cases in the proof of the proposition. First, we take the cocycle to be
``reducible'', in the sense that the $B_x$ have a common invariant line, for a subset of
values of $x\in X_0$ with sizable mass. More precisely, we suppose that
\begin{equation}\label{eq.reducible}
\sps(\Gamma(z_0,0))\ge \beta_0
\quad\text{for some } z_0\in B(0,1).
\end{equation}
It is no restriction to suppose that $z_0=0$, as we will see in a while, so let us do that for
the time being. Then, \eqref{eq.reducible} implies that $\sqs(\Gamma(0,0))\ge \beta_0/2$ for every
$\sqs$ in a neighborhood of $\sps$.
Suppose, by contradiction, that $\eta\big(B(0,\vep^{-1})\big) > \delta$.
Then, by Corollary~\ref{c.oito}, there exists $r_0 \in (0, 1)$ such that
\begin{equation}\label{eq.awayx}
\eta\big(B(0,\vep^{-1})\setminus B(0,r_0)\big)\le \delta \quand \sps(Y) \ge \alpha,
\end{equation}
where $Y=\{x\in \cX: \hat{B}_x^{-1}(B(0,r_0))\nsubset \hat{D}_x^{-1}(B(0,r_0)) \}$.
The latter implies that $\sqs(Y)\ge \alpha/2$ for every $\sqs$ sufficiently close to $\sps$.
Lemma~\ref{l.aaa} implies that
$$
\begin{aligned}
\sqs(\Gamma(0,0)) & \eta\Big(B(0,r_0) \setminus B(0,\lambda_0 r_0)\Big) \\
& = \int_{\Gamma(0,0)}\eta\big(B(0,r_0)\big) - \eta\big(B(0,\lambda_0r_0)\big)\,d\sqs(x) \\
& \le \int_{\Gamma(0,0)}\Big(\eta(B(0,r_0)) - \eta(\hat{B}_x^{-1}(B(0,r_0)))\Big)\,d\sqs(x)
\end{aligned}
$$
Since $\eta$ is stationary, the last expression coincides with
$$
\int_{\cX\setminus \Gamma(0,0)} \Big(\eta(\hat{B}_x^{-1}(B(0,r_0)))-\eta(B(0,r_0))\Big)\,d\sqs(x),
$$
which is, clearly, bounded above by $\eta\big(B(0,\vep^{-1})\setminus B(0,r_0)\big)$.
In this way, using \eqref{eq.awayx}, we find that
$$
\sqs(\Gamma(0,0)) \eta\big(B(0,r_0)\setminus B(0,\lambda_0 r_0)\big)
\le \eta\big(B(0,\vep^{-1})\setminus B(0,r_0)\big) \le \delta.
$$
Recall that $q(\Gamma(0,0))\ge \beta_0/2$. Then, using \eqref{eq.awayx} once more,
$$
\eta\big(B(0,\vep^{-1})\setminus B(0,\lambda_0 r_0)\big)
\le \delta + 2 \delta\beta_0^{-1}.
$$
Arguing by induction we get that
$$
\eta\big(B(0,\vep^{-1}) \setminus B(0,\lambda_0^{j}r_0)\big)
\le \delta (1+2\beta_0^{-1})^{j} \quad\text{for every $j\ge 0$.}
$$
In particular, this holds for $j=\kappa N$.
Hence, cf. Lemma~\ref{l.aaa},
\begin{equation}\label{eq.x}
\eta\big(B(0,\vep^{-1}) \setminus B(0,\sigma^{2\tau N}r_0)\big)
\le \delta (1+2\beta_0^{-1})^{\kappa N}.
\end{equation}
Denote $\cB_0=B(0,\sigma^{2\tau N}r_0)$. From Proposition~\ref{p.oitenta}
we get that $\cB_0$ and its pre-image under $\hat{B}_x$ are disjoint
for every $x\in Y$. So, \eqref{eq.x} implies
\begin{equation}\label{eq.xx}
\eta\big(\hat{B}_x^{-1}(\cB_0)\big)\le \delta (1+2\beta_0^{-1})^{\kappa N}
\quad\text{for every $x\in Y$.}
\end{equation}
Since $\eta$ is stationary,
$$
\begin{aligned}
\sqs(Y)\eta(\cB_0)
& = \int_{\cX\setminus Y}\eta\big(\hat{B}_x^{-1}(\cB_0) \setminus \cB_0\big)\,d\sqs(x)
 + \int_Y \eta\big(\hat{B}_x^{-1}(\cB_0)\big)\,d\sqs(x) \\
& \le \int_{\cX\setminus Y}\eta\big(B(0,\vep^{-1}) \setminus \cB_0\big)\,d\sqs(x)
 + \int_Y \eta\big(\hat{B}_x^{-1}(\cB_0)\big)\,d\sqs(x)
\end{aligned}
$$
Recall that $q(Y)\ge\alpha/2$. Hence, using \eqref{eq.x} and~\eqref{eq.xx},
\begin{equation}\label{eq.xxx}
\eta(\cB_0) \le 4 \delta \alpha^{-1} (1+2\beta_0^{-1})^{\kappa N}.
\end{equation}
Adding \eqref{eq.x} and~\eqref{eq.xxx} we conclude that
\begin{equation}\label{eq.tildec1}
\eta(B(0,\vep^{-1})) \le \tilde c \delta,
\quad\quad\quad \tilde c = (1+4\alpha^{-1})(1+2\beta_0^{-1})^{\kappa N}.
\end{equation}

So far we have been assuming that the fixed point sits at $z_0=0$. Let us now explain
how this assumption can be removed. Notice that for every $x\in X_0$ the matrix $A_x$
is diagonal, its larger eigenvalue is far from the unit circle, and the corresponding
eigenvector is horizontal.
Thus, an attracting fixed point $z_0\in B(0,1)$ as in \eqref{eq.reducible} must be
close to zero (in other words, the direction it represents is close to horizontal)
if the cocycle $\sB$ is close to $\sA$. Define
$$
H=\left(
      \begin{array}{cc}
        a_0 & -b_0 \\
        b_0 & a_0 \\
      \end{array}
    \right),
$$
where $(a_0, b_0) \approx (1, 0)$ be a unit vector in the direction represented
by $z_0$, and then consider the cocycle $\sC$ defined by $C_x=H\,B_x\,H^{-1}$.
Clearly, $\hat B_x^{-1}(z_0)=z_0$ translates to $\hat C_x^{-1}(0)=z_0$.
Moreover, if $\eta$ is $(\sB,\sqs)$-stationary then $H_*\eta$ is
$(\sC,\sqs)$-stationary. Thus, we can use the arguments in the previous paragraph
to conclude that
$$
H_*\eta(B(0,2\vep^{-1})) \le \tilde c \delta.
$$
Finally, $H(B(0,\vep^{-1})) \subset B(0,2\vep^{-1})$ because $H$ is close to the
identity, and so it follows that
\begin{equation}\label{eq.tildec2}
\eta(B(0,\vep^{-1})) \le \tilde c \delta
\end{equation}
also in this case. One can easily dispose of the factor $\tilde c$.
So, the proof of Proposition~\ref{keyprop2} in the reducible case is complete.

\medskip

Now, we assume that the cocycle is ``irreducible'', in the sense that
$\sps(\Gamma(z,0))<\beta_0$ for all $z\in B(0,1)$. We need the following lemma:

\begin{lemma}\label{l.zzero}
There exists $\gamma>0$ such that if $d(\sA,\sB)<\gamma$ and $\sps(\Gamma(z,0))<\beta_0$
for all $z\in B(0,1)$ then for each (small) $\varsigma>0$ there exist $z_0\in B(0,1)$
and $\rho_0>0$ such that
\begin{itemize}
  \item [(a)] $\sps(\Gamma(z,\rho_0)) \le \sps(\Gamma(z_0,\rho_0))+\varsigma$ for all $z\in B(0,1)$;
  \item [(b)] $\beta_0/4 \le \sps(\Gamma(z_0,\rho_0))\le \beta_0$;
  \item [(c)] $\sps(X_0\setminus \Gamma(z_0,c\sigma^{-2\tau N}\rho_0)) \ge \beta_0/2$.
\end{itemize}
\end{lemma}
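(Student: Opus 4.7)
The plan is to analyze the non-decreasing function $M(\rho):=\sup_{z\in\bar B(0,1)}\sps(\Gamma(z,\rho))$ and to choose $\rho_0$ at the transition where $M$ first enters the window $[\beta_0/4,\beta_0]$. First I would show $\lim_{\rho\to 0^+}M(\rho)<\beta_0$ by contradiction. If there existed sequences $z_n\in\bar B(0,1)$ and $\rho_n\downarrow 0$ with $\sps(\Gamma(z_n,\rho_n))\ge\beta_0$, pass to a subsequence with $z_n\to z_*\in\bar B(0,1)$. Since each $\hat B_x$ is a Möbius transformation with at most two fixed points, the condition $x\in\Gamma(z_n,\rho_n)$ for infinitely many $n$ forces $\hat B_x$ to fix $z_*$ exactly, so $\limsup_n\Gamma(z_n,\rho_n)\subset\Gamma(z_*,0)$; Fatou yields $\sps(\Gamma(z_*,0))\ge\beta_0$, contradicting the hypothesis. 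On the other hand, for $d(\sA,\sB)<\gamma$ sufficiently small each $\hat B_x^{-1}$ with $x\in X_0$ has a repelling fixed point close to $0$, so $M(\rho)>\beta_0/4$ once $\rho$ is of order one, ensuring that $M$ does cross $\beta_0/4$ at some finite scale.

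The core tool is a planar covering estimate: in $\C\cong\R^2$, a ball of radius $C_0\rho$ with $C_0=c\sigma^{-2\tau N}$ is covered by at most $KC_0^2$ balls of radius $\rho$ for a universal constant $K$, hence the doubling-type inequality
\[
M(C_0\rho)\le K C_0^2\, M(\rho)\qquad\text{for every }\rho>0.
\]
The factor $8c^2\sigma^{-4\tau N}$ in the denominator of $\beta_0=2\alpha_0/(1+8c^2\sigma^{-4\tau N})$ is calibrated so that $KC_0^2\beta_0\le\alpha_0-\beta_0/2$. The analogous estimate at ratios $1+\varepsilon$ close to $1$, for which the covering constant remains bounded, shows that $M$ cannot jump from below $\beta_0/4$ to above $\beta_0$. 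Taking $\rho_0$ slightly above $\inf\{\rho: M(\rho)\ge\beta_0/4\}$ and invoking the left-continuity of $M$ (as the supremum over the compact set $\bar B(0,1)$ of left-continuous non-decreasing functions $\rho\mapsto\sps(\Gamma(z,\rho))$) gives $M(\rho_0)\in[\beta_0/4,\beta_0]$.

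Now pick $z_0\in\bar B(0,1)$ with $\sps(\Gamma(z_0,\rho_0))\ge M(\rho_0)-\varsigma$. Conclusion (a) is immediate from the definition of $M$. Conclusion (b) follows for $\varsigma<\beta_0/8$ from the two-sided bound on $M(\rho_0)$. Conclusion (c) follows from
\[
\sps(\Gamma(z_0,C_0\rho_0))\le M(C_0\rho_0)\le KC_0^2 M(\rho_0)\le KC_0^2\beta_0\le\alpha_0-\beta_0/2\le\sps(X_0)-\beta_0/2,
\]
using $\sps(X_0)\ge\alpha_0$. The principal difficulty is Step 3, namely ruling out a jump of $M$ across the target window at the critical scale: this requires controlling $M$ at radius ratios close to $1$ via the planar covering constant, and it is here that both the compactness of $\bar B(0,1)$ and the arithmetic calibration of $\beta_0$ are essential.
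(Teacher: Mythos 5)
Your route is essentially the paper's: a compactness argument keeping the small\,-scale suprema below $\beta_0$, a covering/multiplicity bound to locate a scale $\rho_0$ where the supremum lies between $\beta_0/4$ and $\beta_0$, a near-maximizer $z_0$ for (a)--(b), and a covering of the ball of radius $c\sigma^{-2\tau N}\rho_0$ by about $4c^2\sigma^{-4\tau N}$ balls of radius $\rho_0$ combined with the calibration of $\beta_0$ for (c) (the paper organizes the scale choice via $\varrho=\inf\{r:\sup_z p(\Gamma(z,r))>\beta_0\}$ and $\rho_0=9\varrho/10$ rather than via your $\rho_*$, but the mechanism is the same). Two points in your execution, however, do not hold as written. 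First, your deduction of (b): picking $z_0$ with $p(\Gamma(z_0,\rho_0))\ge M(\rho_0)-\varsigma$ and knowing only $M(\rho_0)\ge\beta_0/4$ yields $p(\Gamma(z_0,\rho_0))\ge\beta_0/4-\varsigma$, i.e.\ only $\beta_0/8$ for $\varsigma<\beta_0/8$, not the bound $\beta_0/4$ the lemma asserts; and the left-continuity of $M$ you invoke is false in general (with the paper's closed balls each $\rho\mapsto p(\Gamma(z,\rho))$ is right-, not left-, continuous, and a supremum of left-continuous functions need not be left-continuous). Neither is needed: monotonicity already gives $M(\rho_0)\ge\beta_0/4$ for $\rho_0>\rho_*$, the ratio-$(1+\varepsilon)$ covering (four balls amply suffice) gives $M(\rho_0)\le 4M(\rho)<\beta_0$ for $\rho_0$ close enough to $\rho_*$, and to produce an actual witness at level $\beta_0/4$ you can either note that $z\mapsto p(\Gamma(z,\rho_0))$ is upper semicontinuous on the compact ball (the same ``finitely many fixed points plus reverse Fatou'' argument you used in your first step), so the supremum is attained, or use a strict threshold as the paper does, covering a ball of radius $11\varrho/10$ with four balls of radius $9\varrho/10$ to exhibit an explicit $z'$ with $p(\Gamma(z',\rho_0))>\beta_0/4$, and then choose $z_0$ achieving both this bound and near-maximality.

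Second, your ``universal constant $K$'' cannot be left unspecified: since $\beta_0=2\alpha_0/(1+8c^2\sigma^{-4\tau N})$ gives exactly $\alpha_0-\beta_0/2=4c^2\sigma^{-4\tau N}\beta_0$, the asserted calibration $KC_0^2\beta_0\le\alpha_0-\beta_0/2$ forces $K\le 4$. This does hold for the large ratio $C_0=c\sigma^{-2\tau N}$ (and is precisely the paper's count $\#G\le 4c^2\sigma^{-4\tau N}$), but it needs to be checked rather than asserted for a generic covering constant. (Both you and the paper allow covering centers that may fall outside $B(0,1)$ while bounding their masses by a supremum taken over $B(0,1)$; since this imprecision is shared with the paper's own argument I set it aside.) With these repairs your argument is correct and coincides in substance with the paper's proof.
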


\begin{proof}
Let $\varrho=\inf\{r>0: \sps(\Gamma(z,r)) > \beta_0 \text{ for some } z\in B(0,1)\}$.
We claim that $\varrho>0$. Indeed, suppose that for each $n\in \N$ there exists
$z_n\in B(0,1)$ such that $\sps(\Gamma(z_n,1/n)) > \beta_0$. We may suppose that $(z_n)_n$
converges to some $\tilde z \in B(0,1)$. Then $\sps(\Gamma(\tilde z,r)) > \beta_0$ for
any $r>0$, and so $\sps(\Gamma(\tilde z,0))\ge \beta_0$. The latter contradicts the hypothesis,
and so our claim is proved. Now, define $\rho_0 = {9\varrho}/{10}$ and let
$$
S=\sup\{\sps(\Gamma(z,\rho_0)): z\in B(0,1)\}.
$$
Notice that $S\le\beta_0$, because $\rho_0<\varrho$. We claim that $S > \beta_0/4$.
Indeed, by the definition of $\varrho$, one may find $z\in B(0,1)$ such that
$p(\Gamma(z,11\varrho/10)) > \beta_0$. It is easy to check that $\Gamma(z,11\varrho/10)$
may be covered with not more than four sets $\sps(\Gamma(z',\rho_0))$, $z'\in B(0,1)$.
Then, $p(\Gamma(z',\rho_0))>\beta_0/4$ for some choice of $z'$, and that proves the claim.
Now, given any small $\varsigma>0$, take $z_0\in B(0,1)$ such that
$p(\Gamma(z_0,\rho_0))+\varsigma> S$. Properties (a) and (b) follow immediately from
the previous considerations. We are left to prove (c). Clearly, one can find $G\subset \CC$
with $\# G \le 4 c^2 \sigma^{-4 \tau N}$ such that $\{\Gamma(z,\rho_0): z \in G\}$ covers
$\Gamma(z_0,c\sigma^{-2 \tau N}\rho_0))$. Consequently, since the supremum $S \le \beta_0$.
$$
\begin{aligned}
\mu\big(X_0\setminus \Gamma(z_0,c\sigma^{-2 \tau N}\rho_0))\big)
& \ge p(X_0) - \sum_{z\in G} \mu(\Gamma(z,\rho_0)) \\
& \ge \alpha_0 - 4 c^2 \sigma^{-4 \tau N} \beta_0,
\end{aligned}
$$
Now notice that $\beta_0$ was defined in such a way that this last expression is equal to
$\beta_0/2$. This completes the proof of the lemma.
\end{proof}

Let $z_0$ and $\rho_0>0$ be as given by Lemma~\ref{l.zzero}, for some sufficiently small
$\varsigma>0$. For the same reasons as in the reducible case, it is no restriction to
suppose that $z_0=0$. Define $X_1=X_0\setminus\Gamma(0,c\sigma^{-2\tau N}\rho_0)$.
By parts (c) and (d) of Lemma~\ref{l.zzero},
\begin{equation}\label{eq.X1}
\sps(\Gamma(0,\rho_0)) \ge \beta_0/4 \quand \sps(X_1) \ge \beta_0/2.
\end{equation}
Suppose, by contradiction, that $\eta\big(B(0,\vep^{-1})\big) > \delta$.
Then take $r_0\in(0,1)$ as in Corollary~\ref{c.oito}:
\begin{equation}\label{eq.Y0}
\eta\big(B(0,\vep^{-1})\setminus B(0,r_0)\big)\le \delta \quand \sps(Y_0) \ge \alpha,
\end{equation}
where $Y_0=\{x\in \cX: \hat{B}_x^{-1}(B(0,r_0))\nsubset \hat{D}_x^{-1}(B(0,r_0)) \}$.
Let $r_1=\max\{r_0,c\sigma^{-2\tau N}\rho_0\}$ and
$$
Y_1=\{x\in \cX: \hat{B}_x^{-1}(B(0,r_1))\nsubset\hat{D}_x^{-1}(B(0,r_1)) \}.
$$
Lemma~\ref{l.zzero} and \eqref{eq.Y0} imply
\begin{equation}\label{eq.Y1}
\eta\big(B(0,\vep^{-1})\setminus B(0,r_1)\big)\le\delta
\quand p(Y_1)\ge \beta_1
\end{equation}
where $\beta_1=\min\{\alpha,\beta_0/2\}$. Indeed, the first claim in \eqref{eq.Y1}
is a direct consequence of \eqref{eq.Y0}. For the second claim there are two cases.
If $r_1=r_0$ then $Y_1=Y_0$ and so \eqref{eq.Y0} yields $p(Y_1)\ge \alpha$.
If $r_1 = c\sigma^{-2\tau N}\rho_0$ we may use Lemma~\ref{l.zzero}(c), together with
the observation that
$$
Y_1^c
 = \{x\in \cX: \hat{B}_x^{-1}(B(0,r_1))\subset\hat{D}_x^{-1}(B(0,r_1))\}
 \subset \Gamma(0,r_1),
$$
to conclude that $p(Y_1)\ge\beta_0/2$. This establishes \eqref{eq.Y1}. It follows
that
$$
\sqs(\Gamma(0,\rho))>\beta_0/8 \quand \sqs(Y_1)>\beta_1/2,
$$
as long as $d(\sps,\sqs)$ is sufficiently small. Lemma~\ref{l.aaa} implies that
$$
\begin{aligned}
\sqs(\Gamma(0,\rho_0)) & \eta\Big(B(0,r_1) \setminus B(0,\lambda_0 r_1)\Big) \\
& = \int_{\Gamma(0,\rho_0)}\Big(\eta(B(0,r_1)) -\eta(B(0,\lambda_0 r_1))\Big)\,d\sqs(x) \\
& = \int_{\Gamma(0,\rho_0)}\Big(\eta(B(0,r_1)) - \eta(\hat{B}_x^{-1}(B(0,r_1)))\Big)\,d\sqs(x)
\end{aligned}
$$
Since $\eta$ is stationary, the last expression coincides with
$$
\int_{\cX\setminus \Gamma(0,\rho_0)} \Big(\eta(\hat{B}_x^{-1}(B(0,r_1)))-\eta(B(0,r_1))\Big)\,d\sqs(x),
$$
which is, clearly, bounded above by $\eta\big(B(0,\vep^{-1})\setminus B(0,r_1)\big)$.
Using \eqref{eq.Y1}, it follows that
$$
\sqs(\Gamma(0,\rho)) \eta\Big(B(0,r_1)\setminus B(0,\lambda_0 r_1)\Big)
\le\delta.
$$
Then, using \eqref{eq.Y1} once more,
\begin{equation}\label{eq.away2}
\eta\big(B(0,\vep^{-1})\setminus B(0,\lambda_0 r_1)\big)
\le \delta(1+8\beta_0^{-1})
\end{equation}
Arguing by induction we get that
$$
\eta\big(B(0,\vep^{-1}) \setminus B(0,\lambda_0^{j}r_1)\big)
\le \delta (1+8\beta_0^{-1})^{j} \quad\text{for every $j\ge 0$}
$$
(the cases $j=0$ and $j=1$ are given by \eqref{eq.Y1} and \eqref{eq.away2}, respectively).
In particular, this holds for $j=\kappa N$. Hence, cf. Lemma~\ref{l.aaa},
\begin{equation}\label{eq.xbis}
\eta\big(B(0,\vep^{-1}) \setminus B(0,\sigma^{2\tau N}r_1)\big)
\le \delta (1+8\beta_0^{-1})^{\kappa N}.
\end{equation}
Denote $\cB_1=B(0,\sigma^{2\tau N}r_1)$. From Proposition~\ref{p.oitenta}
we get that $\cB_1$ and its pre-image under $\hat{B}_x$ are disjoint
for every $x\in Y_1$. So, \eqref{eq.xbis} implies
\begin{equation}\label{eq.xxbis}
\eta\big(\hat{B}_x^{-1}(\cB_1)\big)\le \delta (1+8\beta_0^{-1})^{\kappa N}
\quad\text{for every $x\in Y_1$.}
\end{equation}
Since $\eta$ is stationary,
$$
\begin{aligned}
\sqs(Y_1)\eta(\cB_1)
& = \int_{\cX\setminus Y_1}\eta\big(\hat{B}_x^{-1}(\cB_1) \setminus \cB_1\big)\,d\sqs(x)
 + \int_{Y_1} \eta\big(\hat{B}_x^{-1}(\cB_1)\big)\,d\sqs(x) \\
& \le \int_{\cX\setminus Y_1}\eta\big(B(0,\vep^{-1}) \setminus \cB_1\big)\,d\sqs(x)
 + \int_{Y_1} \eta\big(\hat{B}_x^{-1}(\cB_1)\big)\,d\sqs(x)
\end{aligned}
$$
Combining $q(Y_1)\ge\beta_1/2$ with \eqref{eq.xbis} and~\eqref{eq.xxbis},
we find that
\begin{equation}\label{eq.xxxbis}
\eta(\cB_1) \le 4 \delta \beta_1^{-1} (1+8\beta_0^{-1})^{\kappa N}.
\end{equation}
Adding \eqref{eq.xbis} and~\eqref{eq.xxxbis} we conclude that
\begin{equation}\label{eq.tildec3}
\eta(B(0,\vep^{-1})) \le \tilde c \delta,
\quad\quad\quad \tilde c = (1+4\beta_1^{-1})(1+8\beta_0^{-1})^{\kappa N}.
\end{equation}
That completes the proof in the irreducible case, under the assumption that $z_0=0$.
This assumption can be removed in just the same way as before in the reducible case,
and so our argument is complete.

\section{Main estimates}\label{s.estimates}

All we have to do to finish the proof of Proposition~\ref{keyprop2} is to prove
Propositions~\ref{p.oito} and~\ref{p.oitenta}.

\subsection{Mass away from the vertical}\label{ss.away}

In this section we prepare the proof of Proposition~\ref{p.oito}.
Let $\sigma<1$ be as in Lemma~\ref{l.lemmakey1}.
For each $\cK\subset \cX$ consider the associated cocycle $\sK$, as defined in \eqref{eq.Kx}.
Clearly,
\begin{equation}\label{eq.MDi2}
\hat{K_x}(B(0,r\sigma^{2j}))=B(0,r\sigma^{2j+2s_x})
\end{equation}
for every $r>0$, $x\in\cX$, and $j\in\Z$. Define
\begin{equation}\label{eq.Ij}
I_j(r)= B(0,r\sigma^{2j-2})\setminus B(0,r\sigma^{2j})
\end{equation}
for $j\in\Z$ and
\begin{equation}\label{eq.Li2}\begin{aligned}
L_x(r) = \left\{\begin{array}{ll} B(0,r) \setminus \hat{K_x}^{-1}(B(0,r)) & \text{for $x\in\cX_-$} \\
                                  \hat{K_x}^{-1}(B(0,r)) \setminus B(0,r) & \text{for $x\in\cX_+$}
                \end{array} \right.
\end{aligned}\end{equation}
where $\cX=\cX_-\cup \cX_+$ denotes the partition associated to the cocycle $\sK$,
that is, such that $k_x >1$ for $x\in\cX_-$ and $k_x<1$ for $x\in\cX_+$. Notice that
$\cX\setminus\cK\subset \cX_+$ because $k_x=\sigma^{\tau}$ for all $x\in\cX\setminus\cK$.

\begin{lemma}\label{l.consequencias}
If $r>0$ is $(\sB,\sqs,\eta)$-targeted with respect to $\sK$ then
\begin{enumerate}
\item $\int_{\cX_-}\eta(L_x(r))\,d\sqs(x) \leq \int_{\cX_+}\eta(L_x(r))\,d\sqs(x)$
\smallskip
\item $\int_{\cX_-}\sum_{j=1}^{-s_x} \eta(I_j(r))\,d\sqs(x)
\leq \int_{\cX_+}\sum_{j=-s_x+1}^{0} \eta(I_j(r))\,d\sqs(x)$.
\end{enumerate}
More generally, given $n\ge 0$, if $r \sigma^{2t}$ is $(\sB,\sqs,\eta)$-targeted
with respect to $\sK$ for $t = 0, \dots, n$, then
\[
\int_{\cX_-}\sum_{j=t+1}^{t-s_x} \eta(I_j(r))\,d\sqs(x)
\leq \int_{\cX_+}\sum_{j=t-s_x+1}^{t} \eta(I_j(r))\,d\sqs(x)
\text{ for } t=0, \dots, n.
\]
\end{lemma}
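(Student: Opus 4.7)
The plan is to read the assumptions through the stationarity equation. First I would rewrite the $(\sB,\sqs,\eta)$-targeted condition for $r$ together with the stationarity of $\eta$ as
\[
\eta(B(0,r)) = \int \eta\bigl(\hat{B}_x^{-1}(B(0,r))\bigr)\,d\sqs(x)
             \le \int \eta\bigl(\hat{K}_x^{-1}(B(0,r))\bigr)\,d\sqs(x).
\]
Next, by \eqref{eq.MDi2} applied with $j=0$, we have $\hat{K}_x^{-1}(B(0,r))=B(0,r\sigma^{-2s_x})$. For $x\in\cX_-$ we have $s_x<0$ and $\sigma<1$, so $\hat{K}_x^{-1}(B(0,r))\subset B(0,r)$ and hence $\eta(\hat{K}_x^{-1}(B(0,r)))=\eta(B(0,r))-\eta(L_x(r))$; for $x\in\cX_+$ we have $s_x>0$, so $\hat{K}_x^{-1}(B(0,r))\supset B(0,r)$ and $\eta(\hat{K}_x^{-1}(B(0,r)))=\eta(B(0,r))+\eta(L_x(r))$. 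Splitting the right-hand integral according to the partition $\cX=\cX_-\cup\cX_+$ and canceling $\eta(B(0,r))$ yields part (1) immediately.

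For part (2) I would decompose each $L_x(r)$ as a disjoint union of the annuli $I_j(r)$ from \eqref{eq.Ij}. Concretely, for $x\in\cX_-$,
\[
L_x(r) = B(0,r) \setminus B(0,r\sigma^{-2s_x}) = \bigcup_{j=1}^{-s_x} I_j(r),
\]
while for $x\in\cX_+$,
\[
L_x(r) = B(0,r\sigma^{-2s_x}) \setminus B(0,r) = \bigcup_{j=-s_x+1}^{0} I_j(r).
\]
Substituting these identities into (1) gives the inequality in (2).

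Finally, for the general statement, I would simply apply parts (1)--(2) with $r$ replaced by $r\sigma^{2t}$ for each $t=0,\dots,n$, which is admissible since each such radius is assumed $(\sB,\sqs,\eta)$-targeted with respect to $\sK$. The only remaining observation is the index shift $I_j(r\sigma^{2t}) = I_{j+t}(r)$, which is immediate from \eqref{eq.Ij}. Reindexing the sums by $j'=j+t$ transforms the ranges $1\le j\le -s_x$ and $-s_x+1\le j\le 0$ into $t+1\le j'\le t-s_x$ and $t-s_x+1\le j'\le t$, respectively, giving the general inequality.

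No step looks like a serious obstacle; the entire argument is essentially bookkeeping around the stationarity identity. The one place to be mildly careful is tracking the sign convention: $\cX_-$ corresponds to $\sigma_x>1$ (equivalently $s_x<0$) and to $\hat{K}_x^{-1}$ being a contraction of $B(0,r)$, while $\cX_+$ corresponds to $s_x>0$ and to $\hat{K}_x^{-1}$ expanding $B(0,r)$, so that $L_x(r)$ is always the symmetric difference $B(0,r)\mathbin\triangle \hat{K}_x^{-1}(B(0,r))$.
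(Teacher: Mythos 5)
Your argument is correct and coincides with the paper's own proof: combine stationarity of $\eta$ with the targeted condition to get part (1), decompose $L_x(r)$ into the annuli $I_j(r)$ (using the nesting of $\hat K_x^{-1}(B(0,r))$ and $B(0,r)$ on $\cX_-$ and $\cX_+$) for part (2), and apply this to the radii $r\sigma^{2t}$ with the index shift $I_j(r\sigma^{2t})=I_{j+t}(r)$ for the general statement. The only difference is cosmetic: you cancel $\eta(B(0,r))$ after splitting the integral, while the paper writes the same computation as $\int\big(\eta(J)-\eta(\hat K_x^{-1}(J))\big)\,d\sqs\le\int\big(\eta(J)-\eta(\hat B_x^{-1}(J))\big)\,d\sqs=0$.
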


\begin{proof}
Denote $J=B(0,r)$. Using that $r$ is $(\sB,\sqs,\eta)$-centered and $\eta$ is $(\sB,\sqs)$-stationary
$$
\int \left(\eta(J) - \eta(\hat{K_x}^{-1}(J))\right)\,d\sqs(x) \le
\int \left(\eta(J) - \eta(\hat{B}_x^{-1}(J))\right)\,d\sqs(x) = 0.
$$
By the definition \eqref{eq.Li2}, the left hand side coincides with
$$
\int_{\cX_-} \eta (L_x(r))\,d\sqs(x) - \int_{\cX_+}\eta (L_x(r))\,d\sqs(x).
$$
This proves the first claim. The second one is a direct consequence:
just note that, by \eqref{eq.MDi2},
$$
L_x(r)
  = \left\{\begin{array}{ll} B(0,r)\setminus B(0,r\sigma^{-2 s_x}) = \bigsqcup_{j=1}^{-s_x} I_j(r) &
                             \text{ for $x \in\cX_-$} \\
                             B(0,r\sigma^{-2 s_x})\setminus B(0,r) = \bigsqcup_{j=-s_x+1}^{0} I_j(r) &
                             \text{ for $x\in\cX_+$.} \end{array}\right.
$$
The last claim follows, noticing
$I_j(r\sigma^{2t})=I_{j+t}(r)$ for all $j$ and $r$.
\end{proof}

Let $\alpha$ and $\gamma$ be the constants in Lemma~\ref{l.alpha}.

\begin{corollary}\label{c.consequencias}
If $\sps(\cK)\ge 1-\alpha$ and $r>0$ is $(\sB,\sqs,\eta)$-targeted with respect to $K$ then
$$
\eta(I_1(r))\le \tilde\alpha^{-1}\sum_{j=-n_++1}^{0} \eta(I_j(r)),
\quad\text{where } n_+=\sup_{x\in\cX_+}|s_x|.
$$
\end{corollary}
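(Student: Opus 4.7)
The plan is to extract the corollary as an essentially immediate consequence of Lemma \ref{l.consequencias}(2), combined with the lower bound on the measure of $\cX_-$ supplied by Lemma \ref{l.alpha}. I would organize it as follows.

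First, I would apply Lemma \ref{l.consequencias}(2) with $t=0$ (the base case, for which only $r$ itself must be $(\sB,\sqs,\eta)$-targeted with respect to $\sK$) to obtain
\[
\int_{\cX_-}\sum_{j=1}^{-s_x}\eta(I_j(r))\,d\sqs(x)\;\le\;\int_{\cX_+}\sum_{j=-s_x+1}^{0}\eta(I_j(r))\,d\sqs(x).
\]
For $x\in\cX_-$ one has $s_x<0$, so the inner sum on the left contains the term $\eta(I_1(r))$; dropping all other nonnegative terms gives the lower bound $\sqs(\cX_-)\,\eta(I_1(r))$ for the left hand side. For $x\in\cX_+$ the summation index $j$ runs over integers in $[-s_x+1,0]\subset[-n_++1,0]$, so the right hand side is at most $\sqs(\cX_+)\sum_{j=-n_++1}^{0}\eta(I_j(r))$, which is in turn at most $\sum_{j=-n_++1}^{0}\eta(I_j(r))$.

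Combining these two bounds yields
\[
\sqs(\cX_-)\,\eta(I_1(r))\;\le\;\sum_{j=-n_++1}^{0}\eta(I_j(r)).
\]
Now I would invoke Lemma \ref{l.alpha}: the hypothesis $\sps(\cK)\ge 1-\alpha$ (together with the fact that $\sqs$ is within the neighborhood under consideration, so the same estimate transfers to $\sqs$) ensures $\sqs(\cX_-)=\sqs(\{x:k_x>1\})\ge\tilde\alpha$. Dividing through gives exactly the asserted inequality
\[
\eta(I_1(r))\le\tilde\alpha^{-1}\sum_{j=-n_++1}^{0}\eta(I_j(r)).
\]

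There is really only one delicate point, namely the passage from the $\sps$-hypothesis on $\cK$ to a lower bound on $\sqs(\cX_-)$; this is harmless in the regime in which the corollary is to be applied (where $\sqs$ is close to $\sps$ in the sense of \eqref{eq.topology2}, so that $\sqs(\cK)\ge 1-\alpha$ as well), and Lemma \ref{l.alpha} then gives the desired $\tilde\alpha$. The rest is bookkeeping on the ranges of summation, and no new idea beyond Lemma \ref{l.consequencias} is required.
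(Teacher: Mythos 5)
Your proof is correct and takes essentially the same route as the paper: part (2) of Lemma~\ref{l.consequencias} with $t=0$, the obvious lower bound $\sqs(\cX_-)\,\eta(I_1(r))$ on the left integral, the bound of the right integral by $\sum_{j=-n_++1}^{0}\eta(I_j(r))$, and the lower bound $\ge\tilde\alpha$ coming from Lemma~\ref{l.alpha}. Your explicit remark about transferring the bound from $\sps$ to $\sqs$ addresses a point the paper's one-line proof glosses over (it simply writes $\sps(\cX_-)\ge\tilde\alpha$), so your version is if anything slightly more careful.
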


\begin{proof}
Lemma~\ref{l.alpha} gives that $\sps(\cX_-) \ge \tilde\alpha$. Part (2) of Lemma~\ref{l.consequencias}
implies
$$
\sps(\cX_-) \eta(I_1(r)) \le \sps(\cX_+) \sum_{j=-n_x+1}^0\eta(I_j(r))
$$
The conclusion of the corollary follows, immediately.
\end{proof}

\begin{remark}\label{r.consequencias}
If $r$ is $\sB$-centered then the conclusions of Lemma~\ref{l.consequencias} and
Corollary~\ref{c.consequencias} hold for every $(\sB,\sqs)$-stationary measure $\eta$.
\end{remark}

\begin{lemma}\label{l.oitenta1}
There exists $\gamma > 0$ such that if $d(\sA,\sB)<\gamma$ and $r\in[0,1]$
and $x\in\cX$ are such that $\hat{B}_x^{-1}(B(0,r))\cap B(0,r)\neq\emptyset$, then
$$
\hat{B}_x^{-1}(B(0,r))\cup B(0,r)\subset \hat{D}_{sp}^{-1}(B(0,r)).
$$
\end{lemma}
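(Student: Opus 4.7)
The plan is to write $\hat{B}_x^{-1}$ explicitly as a M\"obius transformation $z\mapsto (az+b)/(cz+d)$ that is a small perturbation of the diagonal model $z\mapsto \theta_x^{-2}z$, use this to bound the diameter of the image disk $D:=\hat{B}_x^{-1}(B(0,r))$ linearly in $r$, and then exploit the intersection hypothesis to convert a size bound into a location bound.

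First I would use $d(\sA,\sB)<\gamma$ to conclude (exactly as in the proof of Lemma~\ref{l.seis-a}) that $|a-\theta_x^{-1}|$, $|b|$, $|c|$, $|d-\theta_x|$ are all bounded by a constant times $\gamma$; in particular, for $\gamma$ small enough depending only on $\|\sA\|$, the pole $-d/c$ lies far outside $B(0,1)$, so $D$ is a genuine bounded disk in $\C$ rather than the exterior of one. Next, because $\det B_x^{-1}=1$, the derivative is $(\hat{B}_x^{-1})'(z)=1/(cz+d)^2$, and the same closeness estimate yields $|cz+d|\ge |\theta_x|/2$ throughout $B(0,r)$, whence $|(\hat{B}_x^{-1})'(z)|\le 4|\theta_x|^{-2}\le 4\|\sA\|^2$. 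Applying the mean value inequality on the convex disk $B(0,r)$ then gives $\operatorname{diam}(D)\le 8\|\sA\|^2 r$.

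Now the hypothesis $D\cap B(0,r)\neq\emptyset$ places some point of $D$ within distance $r$ of the origin, so $D\subset B(0,(1+8\|\sA\|^2)r)$. Since every matrix in $\SL(2,\C)$ has operator norm at least $1$, the defining inequality $\sigma^\tau\le \|\sA\|^{-1}/4$ gives $\sigma^{-2\tau}\ge 16\|\sA\|^2\ge 1+8\|\sA\|^2$, so $D\subset B(0,\sigma^{-2\tau}r)=\hat{D}_{sp}^{-1}(B(0,r))$; the inclusion $B(0,r)\subset \hat{D}_{sp}^{-1}(B(0,r))$ is automatic because $\sigma^{-2\tau}\ge 1$.

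I do not expect a serious obstacle here: the safety factor built into the definition of $\tau$ was put there precisely to absorb projective distortion of this type. The one point that should not be overlooked is the role of the intersection hypothesis: without it the image disk could have bounded diameter but sit far from $0$, which would defeat the conclusion when $r$ is very small.
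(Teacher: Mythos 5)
Your argument is correct and follows essentially the same route as the paper's proof: a uniform bound $\operatorname{diam}\big(\hat{B}_x^{-1}(B(0,r))\big)\le\const\,\|\sA\|^2 r$ for $d(\sA,\sB)<\gamma$, combined with the intersection hypothesis to place the union inside a ball $B(0,\const\,\|\sA\|^2 r)$, which the slack in the definition of $\tau$ (namely $\sigma^{-2\tau}\ge 16\|\sA\|^2$) then absorbs. The only difference is that the paper simply asserts the diameter bound (with constant $3\|\sA\|^2$) while you derive it explicitly from the derivative estimate for the M\"obius map, which is a fine way to fill in that detail.
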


\begin{proof}
Take $\gamma>0$ such that if $d(\sA,\sB)<\gamma$ then the diameter of $\hat{B}_x^{-1}(B(0,r))$
is less than $3\|\sA\|^2r$, for every $r\in[0,1]$ and $x\in\cX$. Then,
$\hat{B}_x^{-1}(B(0,r))\cap B(0,r)\neq\emptyset$ implies
$$
\hat{B}_x^{-1}(B(0,r))\cup B(0,r)\subset B(0,4\|\sA\|^2r)\subset\hat{D}_{sp}^{-1}(B(0,r)).
$$
This proves the claim.
\end{proof}

We also need the following calculus result. In the application, for proving
Proposition~\ref{p.oito}, we will take $n_x=|s_x|$ and $a_j=\eta(I_j(r))$.

\begin{lemma}\label{l.sete}
Let $(n_x)_{x\in\cX}$ be a bounded family of positive integers and $(a_j)_{j\in\Z}$
be a sequence of non-negative real numbers. Assume that
\begin{itemize}
\item[(a)] $0<S\leq\int_{\cX_-}n_x\,d\sqs(x)-\int_{\cX_+}n_x\,d\sqs(x)$ and
\smallskip
\item[(b)] $\int_{\cX_-}\sum_{j=t+1}^{t+n_x}a_j\,d\sqs(x) \leq \int_{\cX_+} \sum_{j=t-n_x+1}^{t} a_{j}\,d\sqs(x)$
for $t=0, \dots, n$.
\end{itemize}
Denote $n_-=\sup\{n_x: x\in\cX_-\}$ and $n_+=\sup\{n_x: x\in\cX_+\}$.
Then
\[
\sum_{j=1}^{n}a_j\leq \big(\frac{n_- + n_+}{S}\big) \sum_{j=-n_++1}^0 a_j.
\]
\end{lemma}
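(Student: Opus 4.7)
The natural strategy is to pass to cumulative sums and combine hypothesis (b) across all $t$. Define $U_t=\sum_{j=1}^t a_j$ for $t\ge 0$ and $U_t=-\sum_{j=t+1}^0 a_j$ for $t<0$, so $U$ is nondecreasing; set $P=U_n$ (the quantity to bound) and $N=-U_{-n_+}$ (the right-hand side target). Hypothesis (b) then reads
\[
\int_{\cX_-}(U_{t+n_x}-U_t)\,d\sqs(x)\le \int_{\cX_+}(U_t-U_{t-n_x})\,d\sqs(x),\quad t=0,\ldots,n.
\]

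First I would sum these $n+1$ inequalities. For each fixed $x$ the telescope identity
\[
\sum_{t=0}^n (U_{t+n_x}-U_t)=\sum_{t=n+1}^{n+n_x}U_t-\sum_{t=0}^{n_x-1}U_t
\]
(valid for any $n_x\ge 1$, regardless of its size relative to $n$), and its analogue for the $\cX_+$ side, reduce the summed inequality to one involving only boundary values of $U$. Bounding by monotonicity---$U_t\ge P$ for $t\ge n$, $U_t\le P$ for $t\le n$, $-U_t\le N$ for $-n_+\le t\le 0$, and $U_t\le U_{n_x}$ for $0\le t\le n_x-1$---I obtain
\[
\int_{\cX_-}n_x(P-U_{n_x})\,d\sqs(x)\le \int_{\cX_+}n_x(P+N)\,d\sqs(x),
\]
which, using the drift hypothesis (a), rearranges to $PS\le \int_{\cX_-}n_x U_{n_x}\,d\sqs(x)+N\int_{\cX_+}n_x\,d\sqs(x)$.

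The one delicate step is bounding $\int_{\cX_-}n_x U_{n_x}\,d\sqs(x)$. The naive estimate $U_{n_x}\le U_n=P$ gives $PS\le n_-P+n_+N$, which is vacuous because $S\le n_-$. The remedy is to invoke hypothesis (b) at the single value $t=0$: it reads $\int_{\cX_-}U_{n_x}\,d\sqs(x)\le \int_{\cX_+}(-U_{-n_x})\,d\sqs(x)\le N$, since $-U_{-k}\le N$ for every $k\ge 0$. Combined with $n_x\le n_-$ throughout $\cX_-$, this yields $\int_{\cX_-}n_x U_{n_x}\,d\sqs(x)\le n_- N$; together with $\int_{\cX_+}n_x\,d\sqs(x)\le n_+$, this produces $PS\le(n_-+n_+)N$, as required.
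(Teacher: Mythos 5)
Your argument is correct and is essentially the paper's own proof written in cumulative-sum notation: summing hypothesis (b) over $t=0,\dots,n$, telescoping, bounding the boundary terms by monotonicity and non-negativity, extracting $S\sum_{j=1}^n a_j$ via (a), and then invoking (b) once more at $t=0$ together with $n_x\le n_-$ on $\cX_-$ and $n_x\le n_+$ on $\cX_+$ is exactly the published computation. One small imprecision: the parenthetical claim ``$-U_{-k}\le N$ for every $k\ge 0$'' is false for $k>n_+$, but what you actually use is $-U_{-n_x}\le N$ integrated over $\cX_+$, where $n_x\le n_+$, so the step stands.
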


\begin{proof}
Begin by noting that
\begin{equation}\label{eq.soma1}\begin{aligned}
\sum_{t=0}^{n}\sum_{j=t+1}^{t+n_x} a_j
& = \sum_{l=1}^{n_x} \sum_{j=l}^{n+l} a_j  
 \ge n_x \left(\sum_{j=1}^{n} a_j - \sum_{j=1}^{n_x}a_j\right)
\end{aligned}\end{equation}
and that
\begin{equation}\label{eq.soma2}\begin{aligned}
\sum_{t=0}^{n} \sum_{j=t-n_x+1}^{t} a_{j}
& = \sum_{l=-n_x+1}^{0} \sum_{j=l}^{n+l} a_j 
  \le n_x\left(\sum_{j=1}^{n}a_j + \sum_{j=-n_x+1}^{0}a_j\right)
\end{aligned}\end{equation}
So, adding the inequalities (b) over all $t=0, \dots, n$ and using \eqref{eq.soma1}-\eqref{eq.soma2},
$$
\begin{aligned}
\int_{\cX_-}n_x \left[\sum_{j=1}^n a_j - \sum_{j=1}^{n_x} a_j\right] d\sqs(x)
& \le \int_{\cX_+}n_x \left[\sum_{j=1}^n a_j + \sum_{j=-n_x+1}^0 a_j \right] d\sqs(x)
\end{aligned}
$$
or, equivalently,
$$
S \sum_{j=1}^{n} a_j \leq \int_{\cX_-}n_i\sum_{j=1}^{n_x} a_j\,d\sqs(x) +\int_{\cX_+} n_x \sum_{j=-n_x+1}^{0} a_j\,d\sqs(x)
$$
This implies, using the inequality (b) once more,
$$\begin{aligned}
S \sum_{j=1}^{n} a_j
& \leq n_- \int_{\cX_-}\sum_{j=1}^{n_x} a_j\,d\sqs(x) + n_+ \int_{\cX_+} \sum_{j=-n_x+1}^{0} a_j\,d\sqs(x) \\
& \leq (n_- + n_+) \int_{\cX_+}\sum_{j=-n_x+1}^{0} a_j\,d\sqs(x).
\end{aligned}$$
This last expression is bounded above by $(n_- + n_+) \sum_{j=-n_++1}^{0}a_j$.
In this way we get the conclusion of the lemma.
\end{proof}

Define $\alpha_s =\sum_{j=(s-1)n_++1}^{s n_+}a_j$ for each $s \ge 0$.
In the same setting as Lemma~\ref{l.sete}, we obtain

\begin{corollary}\label{c.sete}
Let $n=s_0 n_+$ for some integer $s_0 \geq 1$. There is $s\in\{1,\dots,s_0\}$ such that
$$
\alpha_s
\leq \big(\frac{n_-+n_+}{s_0S}\big)\alpha_0.
$$
\end{corollary}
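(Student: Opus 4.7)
The plan is to reduce Corollary~\ref{c.sete} to Lemma~\ref{l.sete} via a simple averaging (pigeonhole) argument, since the $\alpha_s$ for $s=1,\dots,s_0$ form a partition of the range $\{1,\dots,n\}$ into blocks of length $n_+$.

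First I would apply Lemma~\ref{l.sete} directly, with the same hypotheses (a)--(b) and with $n=s_0 n_+$. Its conclusion gives
\[
\sum_{j=1}^{s_0 n_+} a_j \;\le\; \frac{n_-+n_+}{S}\,\sum_{j=-n_++1}^{0} a_j \;=\; \frac{n_-+n_+}{S}\,\alpha_0.
\]

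Next I would observe that, by the very definition of $\alpha_s$, the blocks $\{(s-1)n_++1,\dots,s n_+\}$ for $s=1,\dots,s_0$ are disjoint and tile $\{1,\dots,s_0 n_+\}$. Hence
\[
\sum_{s=1}^{s_0}\alpha_s \;=\; \sum_{j=1}^{s_0 n_+} a_j \;\le\; \frac{n_-+n_+}{S}\,\alpha_0.
\]
By the pigeonhole principle, at least one index $s\in\{1,\dots,s_0\}$ satisfies
\[
\alpha_s \;\le\; \frac{1}{s_0}\sum_{s'=1}^{s_0}\alpha_{s'} \;\le\; \Big(\frac{n_-+n_+}{s_0 S}\Big)\alpha_0,
\]
which is exactly the asserted inequality.

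There is no real obstacle here: the whole content has already been packed into Lemma~\ref{l.sete}, and the corollary is just the observation that dividing the telescoped estimate by the number of blocks gives a gain of $1/s_0$, which is what will later be exploited to push the mass of $\eta$ away from the vertical direction when $s_0$ is chosen large compared to $(n_-+n_+)/S$.
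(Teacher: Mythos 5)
Your proposal is correct and follows exactly the same route as the paper: apply Lemma~\ref{l.sete} with $n=s_0 n_+$, note that $\sum_{s=1}^{s_0}\alpha_s=\sum_{j=1}^{n}a_j$, and conclude by taking the minimum (pigeonhole/averaging) to gain the factor $1/s_0$. Nothing is missing.
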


\begin{proof}
The conclusion of Lemma~\ref{l.sete} may be rewritten
$$
\sum_{s=1}^{s_0}\alpha_j = \sum_{j=1}^{n} a_j
\le \big(\frac{n_- + n_+}{S}\big) \sum_{j=-n_++1}^0 a_j
= \big(\frac{n_-+n_+}{S}\big)\alpha_0\,.
$$
This implies that $\min_{1\leq s\leq s_0}\alpha_j \le (n_-+n_+)\alpha_0/(s_0 S)$,
as claimed.
\end{proof}

\subsection{Proof of Proposition~\ref{p.oito}}

The claim will follow from applying Lemma~\ref{l.sete} and Corollary~\ref{c.sete} to
appropriate data. As before, let $K$ be the cocycle and $\cX=\cX_+\cup\cX_-$ be the
partition associated to a given set $\cK\subset\cX$.
We break the presentation of the proof into three steps:

\smallskip\paragraph{\bf Step 1:}
Define  $S(\sps)=\int_{\cX_-}n_x\,d\sps(x)-\int_{\cX_+}n_x\,d\sps(x)$, where
$$
n_x
= \frac{|\log k_x|}{|\log\sigma|}
= \left\{\begin{array}{ll} |s_x| & \text{if } x\in \cK \\ \tau & \text{if } x\in\cX\setminus\cK.\end{array}\right.
$$
Let $r_0\ge 0$ be such that every $r\in[r_0,\vep^{-1}]$ is $(\sB,\cK)$-centered with respect to $D$.
Take $\cK$ to have been chose such that $\sps(\cK)\ge 1-\alpha$. Then, by Lemma~\ref{l.alpha},
$$
S(\sps)
 = \int_{\cX} -\frac{\log k_x}{\log \sigma} \, d\sps(x)
 = \int_{\cX} \frac{\log k_x}{|\log \sigma|} \, d\sps(x)
  > \frac{2}{k|\log\sigma|} >0.
$$
Consequently, there exist $\gamma>0$ and $S>0$ such that $S(\sqs)>S$ for every $\sqs$
with $d(\sps,\sqs)<\gamma$. This corresponds to condition (a) in Lemma~\ref{l.sete}.
Given $\vep>0$ and $\delta>0$, let $n = s_0n_+=s_0\tau$ for some integer
$$
s_0 \ge (\frac{n_-+n_+}{S})^2 \delta^{-1}.
$$
Fix also $R > \sigma^{-2n} \vep^{-1}$. By Lemma~\ref{l.seis-a}, there exists $\gamma>0$
such that if $d(\sA,\sB)<\gamma$ then every $r\in[(R\sigma^{-2})^{-1}, R\sigma^{-2}]$ is
$\sB$-centered with respect to $\sD$.
This applies to $y\sigma^{2j}$ for every $j=0, 1, \dots, n$ and any $y\in[R,R\sigma^{-2}]$,
because $y\sigma^{2j} > \vep^{-1} > (R\sigma^{-2})^{-1}$. Fix $y\in[R,R\sigma^{-2}]$
and define $a_j(y)=\eta(I_j(y))$ for $j\in\Z$. Then Lemma~\ref{l.consequencias} gives
\[
\int_{\cX_-}\sum_{j=t+1}^{t+n_x} a_{j}(y)\,d\sqs(x) \leq \int_{\cX_+} \sum_{j=t-n_x+1}^{t} a_{j}(y)\,d\sqs(x)
\]
for all $t=0, \dots, n$. This corresponds to condition (b) in Lemma~\ref{l.sete}.
Thus, we are in a position to apply Corollary~\ref{c.sete}: we conclude that there exists
$s\in\{1,\dots,s_0\}$ such that
\begin{equation}\label{eq.alphas1}
\alpha_{s}(y)
 \le \big(\frac{n_-+n_+}{s_0 S}\big) \alpha_0(y)
 \leq \big(\frac{S}{n_-+n_+}\big) \delta \alpha_0(y)
 \leq \big(\frac{S}{n_-+n_+}\big)\delta
\end{equation}
Notice that, by definition,
$$
\alpha_s(y) = \eta\big(B(0,y\sigma^{2(s-1)n_+})\setminus B(0,y\sigma^{2 s n_+})\big).
$$

\smallskip\paragraph{\bf Step 2:}
Fix $r_1\ge 1$ such that every $r\in [r_1\sigma^{-2},1]$ is $(\sB,\sqs,\eta)$-targeted with
respect to $\sK$ and either $r_1 \le r_0$ or $r_1$ is not $(\sB,\sqs,\eta)$-targeted.
We are going to estimate $\eta(B(0,\vep^{-1}))\setminus B(0,r_1)$,
with the aid of Lemma~\ref{l.sete}.
Condition (a) in the lemma is just the same as before. Concerning condition (b),
notice that $y\in[R,R\sigma^{-2}]$ above may always be chosen so that $r_1=z\sigma^{2\bar{n}}$
for some $\bar{n}\in\N$, where $z=y\sigma^{2sn_+}$.
Then $z\sigma^{2t}$ is $(\sB,\sqs,\eta)$-targeted for every $t=0, 1, \dots, \bar{n}-1$,
due to our choice of $r_1$, and so Lemma~\ref{l.consequencias} gives
\[
\int_{\cX_-}\sum_{j=t+1}^{t+n_x} a_{j}(z)\,d\sqs(x) \leq \int_{\cX_+} \sum_{j=t-n_x+1}^{t} a_{j}(z)\,d\sqs(x)
\]
for all $t=0, \dots, \bar{n}-1$. Thus, applying Lemma~\ref{l.sete},
$$
\sum_{j=1}^{\bar{n}} a_j(z)
\leq \big(\frac{n_- + n_+}{S}\big) \sum_{j=-n_++1}^0 a_j(z).
$$
The left hand side coincides with (recall that $z\ge R\sigma^{2n} >\vep^{-1}$)
$$
\eta\big(B(0,z)\setminus B(0,z\sigma^{2\tilde{n}-2})\big)
\ge \eta\big(B(0,\vep^{-1}) \setminus B(0,r_1\sigma^{-2})\big).
$$
The sum on the right hand side coincides with
$$
\eta\big(B(0,z\sigma^{-2n_+})\setminus B(0,z)\big) = \alpha_0(z)=\alpha_s(y).
$$
Consequently,
\begin{equation}\label{eq.coroabis}
\eta\big(B(0,\vep^{-1}) \setminus B(0,r_1\sigma^{-2})\big) \le \big(\frac{n_- + n_+}{S}\big)\alpha_s(y).
\end{equation}
The relations \eqref{eq.alphas1} and \eqref{eq.coroabis} yield
\begin{equation}\label{eq.coroabis2}
\eta\big(B(0,\vep^{-1}) \setminus B(0,r_1\sigma^{-2})\big) \le \delta.
\end{equation}
Moreover, Corollary~\ref{c.consequencias} gives that
%
\begin{equation}\label{eq.coroabis3}
\begin{aligned}
\eta\big(I_1(r_1\sigma^{-2})\big)
& \le \tilde\alpha^{-1}\sum_{j=-n_++1}^{0}a_j(r_1\sigma^{-2}) \\
& \le \tilde\alpha^{-1}\eta\big(B(0,\vep^{-1}) \setminus B(0,r_1\sigma^{-2})\big)
\le \tilde\alpha^{-1}\delta
\end{aligned}
\end{equation}
Combining~\eqref{eq.coroabis2} and~\eqref{eq.coroabis3} we obtain
\begin{equation}\label{eq.quaser1}
\eta\big(B(0,\vep^{-1}) \setminus B(0,r_1)\big) \le (1+\tilde\alpha^{-1})\delta.
\end{equation}

\smallskip\paragraph{\bf Step 3:}
If $r_1\le r_0$ then \eqref{eq.quaser1} implies the conclusion of the
proposition (the factor on the right hand side can be avoided replacing $\delta$
by a convenient multiple throughout the argument).
Otherwise, $r_1$ is not $(B,q,\eta)$-targeted with respect to $K$,
and so we must have
\begin{equation}\label{eq.quaser2}
\eta(\hat{B}_x^{-1}B(0,r_1)) > \eta(\hat{K}_x^{-1}B(0,r_1))
\end{equation}
for some $x\in\cX$. Notice that $x$ must belong to $\cX\setminus\cK$, since
$$
\eta(\hat{B}_y^{-1}B(0,r_1))
\le \eta(\hat{D}_{y}^{-1}B(0,r_1))
= \eta(\hat{K}_{y}^{-1}B(0,r_1))
$$
for every $y\in \cK$, because $r_1$ is $(B,\cK)$-centered.
Then \eqref{eq.quaser2} becomes
$$
\eta(\hat{B}_x^{-1}B(0,r_1)) > \eta(\hat{D}_{sp}^{-1}B(0,r_1)).
$$
It follows, using Lemma~\ref{l.oitenta1}, that $B(0,r_1)$ and $\hat{B}_x^{-1}(B(0,r_1))$
are disjoint. By \eqref{eq.quaser1}, this implies that
$$
\eta\big(\hat{B}_x^{-1}(B(0,r_1))\big)\le(1+\tilde\alpha^{-1})\delta.
$$
Moreover,
$\eta(B(0,r_1))\le \eta(\hat{D}_{sp}^{-1}B(0,r_1)) \le \eta(\hat{B}_x^{-1}B(0,r_1))$
and so the previous relation implies that
$$
\eta(B(0,r_1))\le(1+\tilde\alpha^{-1})\delta.
$$
Using \eqref{eq.quaser1} once more, we conclude that
$\eta(B(0,\vep^{-1}))\le2(1+\tilde\alpha^{-1})\delta$.
This implies the conclusion of the proposition (as before, the factor on the right
hand side can be avoided replacing $\delta$ by a convenient multiple throughout
the argument), and so the proof of the proposition is complete.

\subsection{Mass close to the vertical}\label{ss.close}

Next, we are going to prepare the proof of Proposition~\ref{p.oitenta}.
A M\"obius transformation $h$ is a \emph{$\gamma_0$-deformation} of $f(z)=\lambda z$
if $h(z)=(az+b)/(cz+d)$ for some choice of the coefficients satisfying
$$
\max\big\{\big|\frac{|a|}{|\lambda|}-1\big|, \big|b\big|, \big|c\big|, \big||d|-1\big|\big\} < \gamma_0.
$$

\begin{lemma}\label{l.aproximacao}
Given $\beta_0$, $\sigma_0 \in (0,1)$ there are $\gamma_0>0$ and $N_0\in\N$ such that for
any $f(z)=\lambda z$ and $g(z)=\Lambda z$ with $|\lambda|\le\beta_0|\Lambda|$ and
$|\Lambda|\le\sigma_0$, and for any $\gamma_0$-deformation  $\tilde{f}$ of $f$, we have
$$
\tilde{f}(g^{N_0}(B(0,r)))\cap g^{N_0}(B(0,r))=\emptyset.
$$
for any $r\in[0,1]$ such that $\tilde{f}(B(0,r))\nsubset g(B(0,r))$.
\end{lemma}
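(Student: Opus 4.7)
The plan is to exploit the explicit form of a M\"obius transformation to reduce the problem to a comparison of centers and radii of Euclidean discs. Writing $\tilde f(z) = (az+b)/(cz+d)$ for a $\gamma_0$-deformation of $f(z)=\lambda z$, the algebraic identity
$$
\tilde f(z) - \frac{b}{d} = \frac{(ad-bc)\,z}{d\,(cz+d)}
$$
shows that on $B(0,1)$ the map $\tilde f$ is an affine perturbation of the constant $b/d$: from $|c|,|b|<\gamma_0$, $|d|>1-\gamma_0$, and $|a|\le |\lambda|(1+\gamma_0)$, one gets $|\tilde f(z)-b/d|\le K(\gamma_0)|\lambda||z|$ for $|z|\le 1$, with a multiplicative constant $K(\gamma_0)\to 1$ as $\gamma_0\to 0$.

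First I would use this estimate to deduce that the hypothesis $\tilde f(B(0,r))\not\subset g(B(0,r))=B(0,|\Lambda|r)$ forces $|b/d|$ to be uniformly large. Choosing some $z\in B(0,r)$ with $|\tilde f(z)|>|\Lambda|r$, the triangle inequality yields
$$
|b/d|\ \ge\ |\Lambda|r-K(\gamma_0)|\lambda|r\ \ge\ |\Lambda|r\bigl(1-K(\gamma_0)\beta_0\bigr),
$$
and after shrinking $\gamma_0$ in terms of $\beta_0$ so that $K(\gamma_0)\beta_0\le (1+\beta_0)/2$, one gets $|b/d|\ge c_1|\Lambda|r$ with $c_1=(1-\beta_0)/2$.

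Next I would apply the same affine approximation to the much smaller disc $g^{N_0}(B(0,r))=B(0,|\Lambda|^{N_0}r)$. Its image under $\tilde f$ lies in the disc of radius at most $K(\gamma_0)|\lambda||\Lambda|^{N_0}r\le 2\beta_0|\Lambda|^{N_0+1}r$ centered at $b/d$, while $g^{N_0}(B(0,r))$ itself has radius $|\Lambda|^{N_0}r$. For the two discs to be disjoint it suffices that
$$
|b/d|\ >\ |\Lambda|^{N_0}r+2\beta_0|\Lambda|^{N_0+1}r,
$$
and, since $\beta_0,|\Lambda|<1$, this is implied by $|b/d|>2|\Lambda|^{N_0}r$. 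Using the lower bound $|b/d|\ge c_1|\Lambda|r$, it therefore suffices to have $c_1|\Lambda|r>2|\Lambda|^{N_0}r$, i.e.\ $|\Lambda|^{N_0-1}<c_1/2$. Since $|\Lambda|\le\sigma_0<1$, this is achieved by picking $N_0$ with $\sigma_0^{N_0-1}<c_1/2$, a choice depending only on $\beta_0$ and $\sigma_0$.

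The point that needs the most care is the uniformity of the constants: $\gamma_0$ and $N_0$ must depend only on $\beta_0$ and $\sigma_0$, and not on the particular $\lambda$, $\Lambda$, $\tilde f$, or $r$. This is guaranteed because the size bounds on $|a|/|\lambda|$, $|d|$, $|b|$, $|c|$ are built into the definition of a $\gamma_0$-deformation, while the assumptions $|\lambda|\le\beta_0|\Lambda|$ and $|\Lambda|\le\sigma_0$ provide a uniform ceiling on the ``linear'' part of $\tilde f$ near the origin. Beyond that, the estimates involved are essentially triangle inequalities on two concentric or separated discs, with no subtle obstruction to worry about.
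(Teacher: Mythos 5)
Your overall route is essentially the paper's: the hypothesis $\tilde f(B(0,r))\not\subset g(B(0,r))$ forces the translation coefficient $b$ to be comparable to $|\Lambda|r$, and then, with $N_0$ chosen so that $\sigma_0^{N_0-1}$ is small compared with $1-\beta_0$, the disc $g^{N_0}(B(0,r))=B(0,|\Lambda|^{N_0}r)$ is so small that $\tilde f$ maps it into a disc around $b/d$ whose radius, added to $|\Lambda|^{N_0}r$, stays below $|b/d|$; the paper does exactly this through the bounds $|\tilde f(z)|\le(|a||z|+|b|)/(|d|-|c||z|)$ and $|\tilde f'|\le 1$ near the origin, so the difference is only in packaging (your identity $\tilde f(z)-b/d=(ad-bc)z/(d(cz+d))$ versus their sup and derivative estimates).

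The one step that does not hold as written is the estimate $|\tilde f(z)-b/d|\le K(\gamma_0)|\lambda||z|$ with $K(\gamma_0)\to 1$. From the definition of a $\gamma_0$-deformation one only gets $|ad-bc|\le|a||d|+|b||c|\le|\lambda|(1+\gamma_0)^2+\gamma_0^2$, because $|b|$ and $|c|$ are bounded by $\gamma_0$ in absolute terms, not by $\gamma_0|\lambda|$; since the lemma imposes no lower bound on $|\lambda|$ or $|\Lambda|$, the additive $\gamma_0^2$ cannot be absorbed into $K(\gamma_0)|\lambda|$ uniformly. In your Step B this overclaim is harmless: all you need is that the image of $B(0,|\Lambda|^{N_0}r)$ has radius at most $|\Lambda|^{N_0}r$, which the honest bound gives because $|\lambda|\le\beta_0<1$ once $\gamma_0$ is small in terms of $1-\beta_0$. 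In Step A it is not harmless: when $|\Lambda|\le\gamma_0^2$ (allowed, since only $|\Lambda|\le\sigma_0$ is assumed) the inequality $|b/d|\ge|\Lambda|r-K(\gamma_0)|\lambda|r$ becomes vacuous and the crucial lower bound $|b/d|\ge c_1|\Lambda|r$ does not follow by your route. The repair is immediate and is precisely the paper's step: from a point $|z|\le r$ with $|\tilde f(z)|>|\Lambda|r$ and $|\tilde f(z)|\le(|a||z|+|b|)/(|d|-|c||z|)$ one gets $|b|\ge(1-2\gamma_0)|\Lambda|r-(1+\gamma_0)|\lambda|r\ge\tfrac{1-\beta_0}{2}|\Lambda|r$ for $\gamma_0$ small in terms of $1-\beta_0$, after which the rest of your argument goes through with slightly adjusted constants. (In fairness, the paper's own derivative estimate writes $|bc|\le(\gamma_0|\lambda|)^2$, i.e. it implicitly uses the relative bounds $|b|,|c|<\gamma_0|\lambda|$, which do hold in the application in Proposition~\ref{p.oitenta}; under that reading your claim on $K(\gamma_0)$ is correct and your proposal is complete as written.)
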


\begin{proof}
Fix $N_0\in\N$ such that
\begin{equation}\label{eq.N1}
|\Lambda|^{N_0-1}
\le \sigma_0^{N_0-1}
\le \frac{1-\beta_0}{100}
\end{equation}
and $\gamma_0>0$ given by
\begin{equation}\label{eq.gamma1}
\gamma_0 = \frac{1-\beta_0}{100} < \frac{1}{100}
\end{equation}
Write $\tilde{f}(z) = (az+b)/(cz+d)$. If $\tilde{f}(0)=0$ then $b=0$ and
\eqref{eq.gamma1} gives
$$
|\tilde{f}(z)|
\le \frac{|az|}{|d|-|c|}
\le \frac{1+\gamma_0}{1-2\gamma_0}|\lambda z|
\le \beta_0^{-1}|\lambda z|
\le |\Lambda z|
$$
whenever $|z|\le 1$. This means that $\tilde{f}(B(0,r))\subset g(B(0,r))$
for all $r\le 1$, in which case we have nothing to do.
So, let us suppose that $b\neq 0$. Take
$$
r_0 = \frac{10|b|}{|\Lambda|(1-\beta_0)}.
$$
Then $|\tilde{f}(z)| \le |\Lambda z|$ for every $|z| \in [r_0,1]$. Indeed,
$$
|\tilde{f}(z)|
\le \frac{|az|+|b|}{|d|-|c|}
\le \frac{|\lambda|(1+\gamma_0)+|\Lambda|(1-|\beta_0|)/10}{1-2\gamma_0}|z|
$$
and, in view of \eqref{eq.gamma1}, the right hand side is bounded by
$$
\frac{\beta_0 (1 + \gamma_0) + 10\gamma_0}{1-2\gamma_0}|\Lambda z|
\le \frac{\beta_0 + 20\gamma_0}{1-2\gamma_0}|\Lambda z|
\le |\Lambda z|.
$$
This gives that $\tilde{f}(B(0,r))\subset g(B(0,r))$ for every $r\in[r_0,1]$.
Now consider $r\in[0,r_0]$. By \eqref{eq.N1},
$$
|\Lambda|^{N_0} r \le |\Lambda|^{N_0} r_0
\le \frac{|\Lambda|(1-\beta_0)}{100} \frac{10|b|}{|\Lambda|(1-\beta_0)}
\le \frac{|b|}{10} \le \frac{|b|}{5|d|}
$$
and that means that
\begin{equation}\label{eq.umlado}
g^{N_0}(B(0,r))\subset B\big(0,\frac{|b|}{5|d|}\big).
\end{equation}
The relation \eqref{eq.gamma1} also leads to
$$
|\tilde{f}'(z)|
\le \frac{|ad|+|bc|}{(|d|-|cz|)^2}
\le \frac{|\lambda|(1+\gamma_0)^2 + (\gamma_0|\lambda|)^2}{(1-2\gamma_0)^2}
$$
for all $|z| \le 1$. Hence, using \eqref{eq.gamma1} once more,
$$
|\tilde{f}'(z)|
\le \frac{1+4\gamma_0}{1-4\gamma_0}|\lambda|
\le \beta_0^{-1}|\lambda|
\le |\Lambda| \le 1
$$
That implies
\begin{equation}\label{eq.outrolado}
\tilde{f}\big(B\big(0,\frac{|b|}{5|d|}\big)\big) \subset B\big(\frac{b}{d},\frac{|b|}{5|d|}\big)
\end{equation}
From \eqref{eq.umlado} and \eqref{eq.outrolado} we get that
$g^{N_0}(B(0,r))\cap \tilde{f}(g^{N_0}(B(0,r)))=\emptyset$ for all $r\in[0,r_0]$.
This completes the proof of the lemma.
\end{proof}

\begin{lemma}\label{l.aproximacao2}
Given $\beta_0, \sigma_0 \in (0,1)$ there exist $\gamma_0>0$ and $N_0\in\N$ such that for
any $f(z)=\lambda z$ and $g(z)=\Lambda z$ with $|\lambda|\le \beta_0|\Lambda|$ and
$|\lambda|\le\sigma_0$, and for any $\tilde{g}$ whose inverse is a $\gamma_0$-deformation
of $g$, we have
$$
f^{N_0}(B(0,r))\cap \tilde{g}^{-1}(f^{N_0}(B(0,r)))=\emptyset
$$
for any $r\in[0,1]$ such that $\tilde{g}^{-1}(B(0,r))\nsubset f^{-1}(B(0,r))$.
\end{lemma}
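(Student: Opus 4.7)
The plan is to mirror the proof of Lemma~\ref{l.aproximacao}, exploiting the duality between $g$ and its inverse. Write $h:=\tilde g^{-1}$ as a M\"obius transformation $h(z)=(Az+B)/(Cz+D)$ close to $g^{-1}(z)=z/\Lambda$; with the coefficients appropriately normalized, one has $|A|$ within $\gamma_0$ of $1/|\Lambda|$, both $|B|$ and $|C|$ bounded by $\gamma_0$, and $|D|$ within $\gamma_0$ of $1$. The role of the weaker contraction that was iterated in Lemma~\ref{l.aproximacao} is now played by $f$ (here the strong contraction, $|\lambda|\le\sigma_0$), while the role of the perturbed map is played by $h$.

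I would first dispose of the trivial case $B=0$. A direct estimate gives $|h(z)|\le(1+\gamma_0)|z|/\bigl(|\Lambda|(1-2\gamma_0)\bigr)$ for $|z|\le 1$, and using $|\lambda|\le\beta_0|\Lambda|$ this is at most $|z|/|\lambda|$ as soon as $\gamma_0$ is small enough. Hence $h(B(0,r))\subset f^{-1}(B(0,r))$ for every $r\in[0,1]$, contradicting the hypothesis, so we may assume $B\ne 0$. Next I would locate a threshold $r_0$ of order $|B|\,|\lambda|/(1-\beta_0)$ above which the hypothesis also fails: on $B(0,r)$ one has $|h(z)|\le(|A|r+|B|)/(|D|-|C|r)$, and using $|A|\le(1+\gamma_0)/|\Lambda|\le(1+\gamma_0)\beta_0/|\lambda|$ a short computation shows $|h(z)|\le r/|\lambda|$ whenever $r\ge r_0$. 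Thus we may restrict to $r\le r_0$.

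For such $r$ the disjointness follows by the same geometric picture as in Lemma~\ref{l.aproximacao}. The ball $f^{N_0}(B(0,r))=B(0,|\lambda|^{N_0}r)\subset B(0,|\lambda|^{N_0}r_0)$ has radius of order $|B|\,|\lambda|^{N_0+1}$ and sits around the origin. Meanwhile $h(0)=B/D$ lies at distance approximately $|B|$ from the origin, and since $|h'|$ is bounded on $B(0,1)$ by an expression of order $1/|\Lambda|\le\beta_0/|\lambda|$, the image $h(f^{N_0}(B(0,r)))$ is contained in a ball of radius of order $\beta_0|B|\,|\lambda|^{N_0}$ centered at $B/D$. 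Choosing $N_0$, depending only on $\beta_0$ and $\sigma_0$, so that $|\lambda|^{N_0}\le\sigma_0^{N_0}$ is much smaller than $1-\beta_0$, both radii become negligible compared to $|B|$, so the two balls are disjoint.

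The main technical obstacle is arranging the thresholds $\gamma_0$ and $N_0$ to depend only on $\beta_0$ and $\sigma_0$, since no upper bound on $|\Lambda|$ is available (indeed $|\Lambda|$ may be arbitrarily large). This is handled by expressing every estimate through the ratio $|\lambda|/|\Lambda|\le\beta_0$ and the bound $|\lambda|\le\sigma_0$, exactly paralleling the argument of Lemma~\ref{l.aproximacao} with the roles of $\lambda$ and $\Lambda$ swapped.
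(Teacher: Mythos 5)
Your proposal is correct and takes essentially the same route as the paper's own proof: the same case split on the translation coefficient ($B=0$ versus $B\neq 0$), the same threshold $r_0$ of order $|B||\lambda|/(1-\beta_0)$ above which the hypothesis $\tilde g^{-1}(B(0,r))\nsubset f^{-1}(B(0,r))$ fails, and the same disjointness picture comparing the tiny ball $f^{N_0}(B(0,r))$ around the origin with its image near $\tilde g^{-1}(0)\approx B$, with $N_0$ chosen so that $\sigma_0^{N_0}\ll 1-\beta_0$. The only slip is cosmetic: the derivative of $\tilde g^{-1}$ is bounded by an expression of order $1/|\Lambda|+\gamma_0^2$ (the $|B||C|$ term need not be $O(1/|\Lambda|)$ when $|\Lambda|$ is large), but since $\gamma_0^2\le 1\le |\lambda|^{-1}$ the bound of order $|\lambda|^{-1}$ that your final estimate actually requires still holds, exactly as in the paper.
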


\begin{proof}
Fix $N_0\ge 1$ such that
\begin{equation}\label{eq.N2}
|\lambda|^{N_0}
\le \sigma_0^{N_0}
\le \frac{1-\beta_0}{100}.
\end{equation}
Fix $\gamma_0>0$ such that
\begin{equation}\label{eq.gamma2}
\gamma_0=\frac{1-\beta_0}{100}
< \frac{1}{100}.
\end{equation}
Write $\tilde{g}(z)=(az+b)/(cz+d)$. 
Suppose that $b=0$. The assumption that $\tilde{g}^{-1}$ is a $\gamma_0$-deformation of
$g^{-1}$, together with \eqref{eq.gamma2}, gives
$$
|\tilde{g}^{-1}(z)|
\le \frac{|d z|}{|a|-|c|}
\le \frac{1+\gamma_0}{1-2\gamma_0}|\Lambda^{-1}z|
\le \beta_0 |\Lambda^{-1}z|
\le |\lambda^{-1}z|
$$
whenever $|z| \le 1$. This means that $\tilde{g}^{-1}(B(0,r)) \subset f^{-1}(B(0,r))$
for every $r \le 1$, in which case there is nothing to do.
Now, let us suppose that $b\neq 0$. Take
$$
r_0 = \frac{10|b||\lambda|}{(1-\beta_0)}.
$$
Then $|\tilde{g}^{-1}(z)| \le |\lambda^{-1} z|$ for every $|z| \in [r_0,1]$. Indeed,
$$
|\tilde{g}^{1}(z)|
\le \frac{|dz|+|b|}{|a|-|c|}
\le \frac{|\Lambda^{-1}|(1+\gamma_0)+|(1-|\beta_0|)/(10|\lambda|)}{1-2\gamma_0}|z|
$$
and, in view of \eqref{eq.gamma2}, the right hand side is bounded by
$$
\frac{\beta_0 (1 + \gamma_0) + 10\gamma_0}{1-2\gamma_0}|\lambda^{-1} z|
\le \frac{\beta_0 + 20\gamma_0}{1-2\gamma_0}|\lambda^{-1} z|
\le |\lambda^{-1} z|.
$$
This means that $\tilde{g}^{-1}(B(0,r)) \subset f^{-1}(B(0,r))$ for every $r \in [r_0, 1]$.
Now consider $r\in[0,r_0]$. By \eqref{eq.N2}
$$
|\lambda|^{N_0} r \le |\lambda|^{N_0} r_0
\le \frac{(1-\beta_0)}{100} \frac{10|b||\lambda|}{(1-\beta_0)}
\le \frac{|b\lambda|}{10} \le \frac{|b\lambda|}{5|a|}
$$
and that means that
\begin{equation}\label{eq.umlado2}
f^{N_0}(B(0,r))
\subset B\big(0,\frac{|b\lambda|}{5|a|}\big)
\subset B\big(0,\frac{|b|}{5|a|}\big).
\end{equation}
Recalling that $|\lambda|\le\min\{1,|\Lambda|\}$, the relation \eqref{eq.gamma2} also gives
$$
\begin{aligned}
|(\tilde{g}^{-1})'(z)|
& \leq \frac{|ad|+|bc|}{(|a|-|c|)^2}
\le \frac{|\Lambda^{-1}|(1+\gamma_0)^2 + \gamma_0^2}{(1-2\gamma_0)^2}
& 
\le 2|\lambda^{-1}|
\end{aligned}
$$
for all $|z| \le 1$. This implies
\begin{equation}\label{eq.outrolado2}
\tilde{g}^{-1}\big(B\big(0,\frac{|b\lambda|}{5|a|}\big)\big)
\subset B\big(\frac{b}{a},\frac{2|b|}{5|a|}\big).
\end{equation}
From \eqref{eq.umlado2} and \eqref{eq.outrolado2} we get that
$f^{N_0}(B(0,r))\cap \tilde{g}^{-1}(f^{N_0}(B(0,r)))=\emptyset$ for every $r\in[0, r_0]$.
This completes the proof of the lemma.
\end{proof}

\subsection{Proof of Proposition~\ref{p.oitenta}}

If $d(\sA,\sB)<\gamma$ then every $\hat B_x^{-1}$ is a $(C\gamma)$-deformation
of $f=\hat A_x^{-1}$, where the constant $C=\sup_{x\in\cX}|\theta_x|$ depends only on $\sA$.
Indeed,
$$
B_x=\left(\begin{array}{cc} a_x & b_x \\ c_x & d_x\end{array}\right)
\quad\text{with}\quad
|a_x - \theta_x|, |b_x|, |c_x|, |d_x - \theta_x^{-1}| < \gamma
$$
yields
$$
\hat{B}_x^{-1}
 = \frac{d_x z - b_x}{-c_x z + a_x}
 = \frac{d_x\theta_x^{-1} z - b_x\theta_x^{-1}} {-c_x\theta_x^{-1} z + a_x\theta_x^{-1}}
$$
with $|d_x\theta_x^{-1} - \theta_x^{-2}|$, $|b_x\theta_x^{-1}|$, $|c_x\theta_x^{-1}|$,
$|a_x\theta_x^{-1} - 1| < \gamma |\theta_x|^{-1} \le C \gamma |\theta_x|^{-2}$.
Take
$$
f=\hat{A}_x^{-1} \quand g=\hat{D}_x^{-1} \quad\text{for each $x\in\cX_-$.}
$$
Observe that $f(z)=|\theta_x|^{-2}|z|$ and $g(z)=\sigma_x^{-2}|z|$.
Since $\sigma_x \le \beta|\theta_x|$ and $\sigma_x \ge \sigma^{-1}$ (see Lemma~\ref{l.lemmakey1}),
we may apply Lemma~\ref{l.aproximacao} with $\beta_0=\beta^2$ and $\sigma_0=\sigma^2$.
Using also the observation  in the previous paragraph, we get that there exist $\gamma_->0$
and $N_-\in\N$ such that
$$
\hat{B}_x^{-1}(\hat{D}_x^{-N_-}(B(0,r)))\cap\hat{D}_x^{-N_-}(B(0,r))=\emptyset
\quad\text{for $x\in\cX_-$}
$$
if $d(\sA,\sB)<\gamma_-$ and $r\in[0,1]$ is such that
$\hat{B}_x^{-1}(B(0,r))\nsubset\hat{D}_x^{-1}(B(0,r))$. Now take
$$
f=\hat{D}_x \quand g=\hat{A}_x \quad\text{for each $x\in\cX_+$.}
$$
Then $f(z)=\sigma_x^{2}|z|$ and $g(z)=|\theta_x|^{2}|z|$ and so we are in the setting of
Lemma~\ref{l.aproximacao2}, with $\beta_0=\beta^2$ and $\sigma_0=\sigma^2$.
In this way we find $\gamma_+>0$ and $N_+\in\N$ such that
$$
\hat{B}_x^{-1}(\hat{D}_x^{N_+}(B(0,r)))\cap\hat{D}_x^{N_+}(B(0,r))=\emptyset
\quad\text{for $x\in\cX_+$}
$$
if $d(\sA,\sB)<\gamma_+$ and $r\in[0,1]$ is such that
$\hat{B}_x^{-1}(B(0,r))\nsubset\hat{D}_x^{-1}(B(0,r))$.
To complete the proof of the proposition,
just take $\gamma=\min\{\gamma_-,\gamma_+\}$ and $N=\max\{N_-,N_+\}$.

\section{Consequences of Theorem~\ref{t.Bernoulli}}\label{s.consequences}

In this section we deduce Theorem~\ref{t.generalcase} and Theorem~\ref{t.convergenciaemmedida}.

\subsection{Proof of Theorem~\ref{t.generalcase}}\label{ss.generalcase}

The main step is the following lemma. Let $\lambda$ be the Lebesgue measure on the
unit interval $I$, and let $\|\eta\|$ denote the total variation of a  signed  measure $\eta$.

\begin{lemma}[Avila]\label{l.pullback}
Let $X$ be a metric space such that every bounded closed subset is compact, and let $\nu$
be any Borel probability measure in $X$ whose support $Z=\supp\nu$ is bounded.

For every $\vep>0$ there is $\delta>0$ and a weak$^*$ neighborhood $V$ of $\nu$
such that every probability $\mu\in V$ whose support is contained in $B_\delta(Z)$ may be
written as $\phi_* q= \mu$ for some probability $q$ on $Z\times I$ satisfying
$\|q - (\nu\times\lambda)\| < \vep$ and some measurable map $\phi:Z\times I \to X$ such
that $d(\phi(x,t),x) < \vep$ for all $x\in Z$.
\end{lemma}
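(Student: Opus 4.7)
The plan is to construct $\phi$ and $q$ by a piecewise coupling. Since $X$ is proper and the support $Z$ is bounded and closed, $Z$ is compact, so I can fix a finite Borel partition $Z=Z_1\sqcup\cdots\sqcup Z_n$, modulo a $\nu$-null set, with $\diam Z_i<\vep/3$ and $m_i:=\nu(Z_i)>0$ for every $i$. I pick $\delta<\vep/3$ and extend to a Borel partition $W_1\sqcup\cdots\sqcup W_n$ of $B_\delta(Z)$ with $Z_i\subset W_i$ and $\diam W_i<\vep$ (for instance by the nearest-$Z_i$ rule). With a small amount of care one can arrange the geometric parameters so that $\nu(\partial W_i)=0$ (a Fubini argument over admissible ball radii, using that $\nu$ charges only $Z$), making $\mu\mapsto\mu(W_i)$ weak$^*$-continuous at $\nu$ by the Portmanteau theorem. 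I then take $V$ to be the weak$^*$ neighborhood of $\nu$ cut out by the finitely many conditions $|\mu(W_i)-m_i|<\vep/n$ for $i=1,\dots,n$.

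Given $\mu\in V$ with $\supp\mu\subset B_\delta(Z)$, I invoke the classical fact that any Borel probability on a standard Borel space is the pushforward of Lebesgue on $I$ by a measurable map, and choose $g_i:I\to W_i$ with $(g_i)_*\lambda=\mu|_{W_i}/\mu(W_i)$ (taking $g_i$ constant when $\mu(W_i)=0$). I then set
\[
\phi(x,t)=g_i(t)\text{ for }x\in Z_i,\qquad q|_{Z_i\times I}=\frac{\mu(W_i)}{m_i}\,(\nu|_{Z_i}\times\lambda),
\]
and extend $\phi$ to the residual $\nu$-null set by $\phi(x,t)=x$, which keeps $d(\phi(x,t),x)=0<\vep$ there.

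The three verifications are then direct. For the diameter bound, $\phi(x,t)\in W_i$ and $x\in Z_i\subset W_i$ give $d(\phi(x,t),x)\le\diam W_i<\vep$ for every $x\in Z$. For the pushforward identity,
\[
\phi_*q=\sum_i\mu(W_i)(g_i)_*\lambda=\sum_i\mu|_{W_i}=\mu,
\]
using $\mu(B_\delta(Z))=1$; the same computation shows $q(Z\times I)=\sum_i\mu(W_i)=1$, so $q$ is a probability. For the total variation,
\[
\|q-\nu\times\lambda\|=\sum_i\Bigl|\frac{\mu(W_i)}{m_i}-1\Bigr|\,m_i=\sum_i|\mu(W_i)-m_i|<n\cdot\frac{\vep}{n}=\vep.
\]
The only delicate step is the first: arranging that the $W_i$ simultaneously respect the $Z_i$, have small diameter in $X$, and are $\nu$-continuity sets, the last property being what turns the required estimate on $\mu(W_i)$ into an open condition in the weak$^*$ topology. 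This is where all the geometric care goes; the remainder is pure book-keeping, modulo Rokhlin's realization of Borel probabilities as Lebesgue pushforwards.
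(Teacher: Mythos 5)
Your overall strategy is the same as the paper's (a finite partition of a neighborhood of $Z$ into small-diameter pieces of positive $\nu$-mass whose $\mu$-masses are controlled on a weak$^*$ neighborhood, a product-type measure $q$ weighted by the ratios $\mu(W_i)/\nu(Z_i)$, and a piecewise realization of the normalized restrictions of $\mu$ as pushforwards from $I$), and the second half of your argument is correct; in fact it slightly streamlines the paper, which invokes the isomorphism theorem for \emph{non-atomic} measures and must treat the atoms of $\mu$ by hand, whereas you only need the pushforward realization of an arbitrary Borel probability on a standard Borel space, so no separate atom discussion is required. But there is a genuine gap exactly at the step you defer to ``a small amount of care'': with your order of construction the property $\nu(\partial W_i)=0$ is not a matter of tuning ball radii, and it can be unattainable. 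If you first fix \emph{any} partition $Z=Z_1\sqcup\cdots\sqcup Z_n$ with small diameters and positive masses, two pieces $Z_i,Z_j$ may be mutually dense in a common positive-$\nu$-measure part of $Z$ (e.g.\ $Z=[0,1]$ with Lebesgue measure, $Z_i$ a fat Cantor set and $Z_j$ its complement in an interval); then for \emph{every} disjoint extension $W_i\supset Z_i$, $W_j\supset Z_j$ each point of $Z_i$ is a limit of points of $W_j\subset W_i^c$, so $\partial W_i\supset Z_i$ and $\nu(\partial W_i)>0$. Moreover, even for a good choice of the $Z_i$, the boundaries produced by the nearest-$Z_i$ rule are bisector sets $\{y:d(y,Z_i)=d(y,Z_j)\}$, not spheres, so the ``Fubini over radii'' device does not apply to them directly; one has to check that these bisectors meet $Z$ only inside the ($\nu$-null) boundaries of the sets used to cut out the $Z_i$. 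In other words, the nullity of the boundaries must be built into the choice of the partition itself, which is precisely what the paper's proof spends most of its effort on: it partitions $B_\delta(Z)$ \emph{first} into sets $Q_i$ obtained as Boolean combinations of balls $B(x,r_x)$, $x\in Z$, $r_x\in(\delta,2\delta)$, with radii chosen (countability argument) so that $\nu(\partial B(x,r_x))=0$, whence $\nu(\partial Q_i)=0$ automatically, and only then sets $Z_i=Z\cap Q_i$.

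The repair is straightforward and keeps the rest of your argument intact: either adopt the paper's construction of the $Q_i$ and take $W_i=Q_i$, $Z_i=Z\cap Q_i$, or, if you want to keep the nearest-point extension, insist from the start that each $Z_i$ is the trace on $Z$ of a Boolean combination of balls with $\nu$-null spheres; then any bisector point of $Z$ lies in some $\partial Q_k$, so the interfaces are $\nu$-null and your Portmanteau/weak$^*$-neighborhood step goes through (note also that $\nu(\partial B_\delta(Z))=0$ since $\supp\nu=Z$ lies in the open $\delta$-neighborhood, and that your diameter bound $\diam W_i<\vep$ uses the small observation that $Z\subset\overline{\bigcup_i Z_i}$, so that every $y\in B_\delta(Z)$ is within $\delta$ of some retained $Z_i$). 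With these points supplied, your computations of $\phi_*q=\mu$, of $q(Z\times I)=1$, and of $\|q-\nu\times\lambda\|=\sum_i|\mu(W_i)-\nu(Z_i)|<\vep$ are correct and coincide with the paper's.
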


\begin{proof}
We claim that for any $\delta>0$ there exists a cover $\cQ$ of $B_\delta(Z)$ by disjoint
measurable sets $Q_i$,  $i=1, \ldots, n$ with $\nu(Q_i)>0$ and $\nu(\partial Q_i)=0$ and
$\diam Q_i < 12\delta$. This can be seen as follows.
For each $x\in Z$ take $r_x\in(\delta,2\delta)$ such that $\nu(\partial B(x,r_x))=0$.
Then $\{B(x,r_x):\,x\in Z \}$ is a cover of the closure of $B_\delta(Z)$, a bounded closed set.
Let $\{V_1,V_2,\dots,V_k\}$ be a finite subcover. By construction, $\diam V_i< 4\delta$ and
$\nu(V_i)>0$ and $\nu(\partial V_i)=0$ for every $i$.
Consider the partition $\cP$ of $\cup_{i=1}^{k}V_i$ into the sets $V_1^* \cap \cdots \cap V_k^*$,
where each $V_i^*$ is either $V_i$ or its complement. Define
$$
Q_1=V_1\cup\{P\in\cP:\,\nu(P)=0 \text{ and } P\subset V_i \text{ with } V_i\cap V_1\not =\emptyset\}.
$$
Then define $Q_2\subset X$ as follows. If $V_2\subset Q_1$ then $Q_2=\emptyset$;
otherwise, notice that $\nu(V_2\setminus Q_1)>0$, and then take
$$
Q_2=V_2\cup\{P\in\cP:\,\nu(P)=0 \text{ and } P\subset V_i \text{ with } V_i\cap V_2\not =\emptyset\}\setminus Q_1
$$
More generally, for every $2 \le l \le k$, assume that $Q_1$, \dots, $Q_{l-1}$ have been defined
and then let $Q_l=\emptyset$ if $V_l\subset \cup_{i=1}^{l-1}Q_i$ and
$$
Q_l=V_l\cup\{P\in\cP: \nu(P)=0 \text{ and } P\subset V_i \text{ with } V_i\cap V_l\not =\emptyset\}\setminus \cup_{i=1}^{l-1}Q_i
$$
if $\nu(V_l\setminus \cup_{i=1}^{l-1}Q_i)>0$. Those of these sets $Q_i$ that are non-empty form
a cover $\cQ$ as in our claim.

Proceeding with the proof of the lemma, take $\delta=\vep/12$ and assume that the neighborhood
$V$ is small enough that
$$
\sum_{i=1}^n |\mu(Q_i) - \nu(Q_i)| < \vep \quad\text{for every } \mu \in V.
$$
Let $Z_i=\supp\nu \cap Q_i$ for each $i=1, \ldots, n$.
Clearly, $\nu(Z_i)=\nu(Q_i)$. Let $q$ be the measure on $Z \times I$ that coincides with
$$
\frac{\mu(Q_i)}{\nu(Q_i)} (\nu \times \lambda)
$$
restricted to each $Z_i \times I$. For each $i$, let $a_{i,j}$, $j\in J(i)$ be the atoms of $\mu$
contained in $Q_i$ (the set $J(i)$ may be empty). Moreover, let
$I_{i,j}$, $j\in J(i)$ be disjoint subsets of $I$ such that
$$
\lambda(I_{i,j}) = \frac{p_{i,j}}{\mu(Q_i)} \quad\text{for all } j\in J(i),
$$
where $p_{i,j}=\nu(a_{i,j})$. Denote $I_{i}=I\setminus \cup_{j\in J(i)} I_{i,j}$. Then
$$
q\big(Z_i \times I_{i} \big)
= \mu(Q_i) - \sum_{j\in J(i)}  p_{i,j}
= \mu\big(Q_i\setminus \{a_{i,j}: j\in J(i)\}).
$$
The assumption implies that $X$ is a polish space, that is, a complete separable metric
space. Since all Borel non-atomic probabilities on polish spaces are isomorphic
(see  Ito~\cite[\S~2.4]{Ito84}), the previous equality ensures that there exists an invertible
measurable map
$$
\phi_i: Z_i \times I_{i} \to Q_i \setminus \{a_{i,j}: j\in J(i)\}
$$
mapping the restriction of $q$ to the restriction of $\mu$. By setting $\phi \equiv a_{i,j}$
on each $Z_i \times I_{i,j}$ we extend $\phi_i$ to a measurable map $Z_i \times I \to Q_i$
that  still sends  the restriction of $q$ to the restriction of $\mu$.
Gluing all these extensions we obtain a measurable map  $\phi: Z \times I \to X$ such that
$\phi_*q = \mu$.
By construction, $\phi(x,t) \in Q_i$ for every $x\in Z_i$ and $t\in I$.
This implies that $d(\phi(x,t), x) \le \diam Q_i < \vep$ for all $(x,t)\in Z\times I$. Finally,
$$
\begin{aligned}
\|q - (\nu \times \lambda)\|
& = \sum_{i=1}^n \big\| \big(\frac{\mu(Q_i)}{\nu(Q_i)} - 1\big) (\nu \times \lambda)\mid(Z_i\times I)\big\| \\
& = \sum_{i=1}^n | \mu(Q_i) - \nu(Q_i) |  < \vep.
\end{aligned}
$$
The proof of the lemma is complete.
\end{proof}

Now, given $\rho>0$, let $\nu$ be a probability measure in $\GL(2,\C)$ with compact support.
Consider $\cX=\supp\nu \times I$, $\sps=\nu\times \lambda$ and $\sA:\cX\to \GL(2,\C)$ given by $\sA(x,t)=x$.
From Theorem~\ref{t.Bernoulli}, there is $\vep>0$ such that $|\lambda_\pm(\sA,\sps)-\lambda_\pm(\sB,\sqs)|<\rho$
for all $(\sB,\sqs)$ such that $d(\sps,\sqs)<\vep$ and $d(\sA,\sB)<\vep$.
On the other hand, Lemma~\ref{l.pullback} implies that there exist a weak$^*$ neighborhood $V$ and $\delta$ such
that if $\nu'\in V$ and $\supp\nu'\subset B_\delta(\supp\nu)$ then there exist $\sB:\cX\to \GL(2,\C)$ and a probability measure $\sqs$ on $\cX$ such that $d(\sps,\sqs)<\vep$, $d(\sA,\sB)<\vep$ and $\nu'=\sB_*\sqs$.
Noting that $\lambda_\pm(\nu)=\lambda_\pm(\sA,\sps)$ and $\lambda_\pm(\nu')=\lambda_\pm(\sB,\sqs)$,
we obtain Theorem~\ref{t.generalcase}.

\subsection{Proof of Theorem~\ref{t.convergenciaemmedida}}

Recall that we denote $M=\cX^\Z$.

\begin{lemma}\label{l.compactprohorov}
Let $(\mu^k)_k$ be a sequence of probabilities on $M$ converging to $\mu$ in the weak$^*$ topology.
Let $(m^k)_k$ be a sequence of probabilities on $M\times \Proj(\C^2)$ projecting down to $(\mu^k)_k$.
Then there exists a subsequence of $(m^k)_k$ converging, in the weak$^*$ topology,
to some probability $m$ that projects down to $\mu$.

In particular, the space $\cM(p)$ of probabilities measures on $M \times \Proj(\C^2)$ that project down
to $\mu$ is compact for the weak$^*$ topology.
\end{lemma}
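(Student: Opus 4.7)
The plan is to deduce this from Prokhorov's theorem, using that $\Proj(\C^2)$ is compact to transfer tightness from the base $M$ to the total space $M\times\Proj(\C^2)$.

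First I would observe that $M=\cX^\Z$ is polish (as a countable product of polish spaces), and therefore any weak$^*$ convergent sequence of probabilities on $M$ is tight by Prokhorov's theorem. So given $\vep>0$, I can find a compact set $\sK\subset M$ with $\mu^k(\sK)\ge 1-\vep$ for every $k$. Since $\Proj(\C^2)$ is itself compact, the product $\sK\times\Proj(\C^2)$ is a compact subset of $M\times\Proj(\C^2)$, and
\[
m^k\bigl(\sK\times\Proj(\C^2)\bigr) = \mu^k(\sK)\ge 1-\vep
\]
because $m^k$ projects down to $\mu^k$. This proves that $(m^k)_k$ is itself tight.

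Next, by the direct (easy) implication of Prokhorov's theorem, tightness of $(m^k)_k$ gives a subsequence (which I will still denote by $(m^k)_k$) converging in the weak$^*$ topology to some Borel probability measure $m$ on $M\times\Proj(\C^2)$. It remains to check that $m$ projects down to $\mu$. For any bounded continuous $\psi:M\to\R$, the function $(\bx,[v])\mapsto\psi(\bx)$ is bounded and continuous on $M\times\Proj(\C^2)$, hence
\[
\int\psi\,d(\pi_*m)=\int\psi\circ\pi\,dm=\lim_k\int\psi\circ\pi\,dm^k=\lim_k\int\psi\,d\mu^k=\int\psi\,d\mu,
\]
where $\pi:M\times\Proj(\C^2)\to M$ is the projection. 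Since bounded continuous functions on the polish space $M$ determine Borel probabilities, this gives $\pi_*m=\mu$, as desired.

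The ``in particular'' statement then follows by taking $\mu^k=\mu$ for every $k$: any sequence $(m^k)_k$ in $\cM(\sps)$ has a weak$^*$ convergent subsequence whose limit still projects to $\mu$, so lies in $\cM(\sps)$; combined with the fact that $\cM(\sps)$ is metrizable (as a subset of a space of probabilities on a polish space), this yields compactness. The main issue to check carefully is the transfer of tightness, but this is automatic here thanks to the compactness of the projective fiber $\Proj(\C^2)$; no non-trivial work is required at that step.
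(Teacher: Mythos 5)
Your argument is correct and follows essentially the same route as the paper: tightness of $(\mu^k)_k$ via Prohorov, transfer to $(m^k)_k$ through the compact fiber $\Proj(\C^2)$, then Prohorov again to extract a convergent subsequence whose limit projects to $\mu$. The only difference is that you spell out the verification that the limit projects to $\mu$ and the metrizability point for the compactness claim, which the paper leaves implicit.
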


\begin{proof}
Since $M$ and $M\times \Proj(\C^2)$ are polish spaces, we may apply Prohorov's theorem
(see Billingsley~\cite[\S~6]{Bil68}) in either of these spaces: a sequence of probabilities $(\xi^k)_k$
has weak$^*$-converging subsequences if and only if for each $\vep>0$ there is a compact set
$K_\vep$ such that $\xi^k(K_\vep)>1-\vep$ for any $k \ge 1$.
By assumption, $(\mu^k)_k$ converges to $\mu$ in the weak$^*$ topology. Thus, given any $\vep>0$,
there is some compact set $K_\vep\subset M$ such that $\mu^k(K_\vep)>1-\vep$,
for any $k\ge1$. Then $\hat{K}_\vep=K_\vep\times \Proj(\C^2)$ is compact and
$m^k(\hat{K}_\vep)=\mu^k(K_\vep)>1-\vep$ for any $k\ge1$. This ensures that $(m^k)_k$ has
weak$^*$-converging subsequences, as claimed. Considering the special case when the sequence
$(\mu^k)_k$ is constant equal to $\mu$, one gets the last part of the lemma.
\end{proof}

\begin{lemma}\label{l.imagensconvergem}
Let $\sA^k:\cX\to\GL(2,\CC)$, $k\ge 1$ be such that $d(\sA^k,\sA)\to 0$, and let
$F_{A^k}:M\times\Proj(\CC^2)\to M\times\Proj(\CC^2)$, $k\ge 1$ be the associated projective
cocycles. Let $(m^k)_k$ be a sequence of probability measures on $M\times\Proj(\CC^2)$
such that $m^k$ projects down to $\mu$ for all $k$.
If $(m^k)_k$ converges to $m$, then $((F_{\sA^k})_*m^k)_k$ converges to $(F_\sA)_*m$,
in the weak$^*$ topology.
\end{lemma}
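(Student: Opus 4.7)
The plan is to test against an arbitrary continuous bounded function $\phi:M\times\Proj(\CC^2)\to\RR$ and show that $\int \phi\circ F_{\sA^k}\,dm^k \to \int \phi\circ F_\sA\,dm$. I would split the difference by inserting the hybrid quantity $\int \phi\circ F_\sA\,dm^k$, obtaining
$$
\Big|\int\phi\circ F_{\sA^k}\,dm^k - \int\phi\circ F_\sA\,dm\Big|
\le \underbrace{\Big|\int\big(\phi\circ F_{\sA^k} - \phi\circ F_\sA\big)\,dm^k\Big|}_{(\mathrm{I})} + \underbrace{\Big|\int\phi\circ F_\sA\,dm^k - \int\phi\circ F_\sA\,dm\Big|}_{(\mathrm{II})}.
$$
Term (II) vanishes in the limit since $\phi\circ F_\sA$ is continuous and bounded on $M\times\Proj(\CC^2)$ and $m^k\to m$ weakly, so all the work is concentrated in term (I).

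For (I), the key observation is that the cocycles $F_{\sA^k}$ converge to $F_\sA$ \emph{uniformly} on $M\times\Proj(\CC^2)$. Indeed, $F_{\sA^k}$ and $F_\sA$ agree in their first coordinate ($f(\bx)$), so one only needs to bound the projective distance between $[A^k_{x_0}v]$ and $[A_{x_0}v]$. Because the assumption \eqref{eq.bounded} provides a uniform lower bound $\|A_{x_0}v\|\ge c\|v\|$, and similarly for $A^k_{x_0}$ once $k$ is large, the projective distance is controlled linearly by $\|A^k_{x_0}-A_{x_0}\|\le d(\sA^k,\sA)$, uniformly in $(\bx,[v])$.

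To convert uniform convergence of the maps into a bound on (I), I would use tightness. Given $\vep>0$, apply Lemma~\ref{l.compactprohorov} (or simply fix a compact $L\subset M$ with $\mu(L^c)<\vep$, so that $\hat L=L\times\Proj(\CC^2)$ satisfies $m^k(\hat L^c)=\mu(L^c)<\vep$ for every $k$, since all $m^k$ project to $\mu$). On the compact set $\hat L\cup F_\sA(\hat L)\cup \bigcup_k F_{\sA^k}(\hat L)$ — which is contained in the compact set $L\cup f(L))\times\Proj(\CC^2)$ — the function $\phi$ is uniformly continuous. Combining this with the uniform convergence established above yields $\sup_{\hat L}|\phi\circ F_{\sA^k}-\phi\circ F_\sA|\to 0$, while on $\hat L^c$ the integrand is bounded by $2\|\phi\|_\infty$ against mass $<\vep$. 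Hence $(\mathrm{I})\to 0$, completing the argument.

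The main obstacle, and the only point requiring some care, is precisely the non-compactness of $M$: a continuous bounded $\phi$ on $M\times\Proj(\CC^2)$ need not be uniformly continuous, so one cannot directly transfer uniform convergence of $F_{\sA^k}$ to uniform convergence of $\phi\circ F_{\sA^k}$. The trick above — exploiting that all $m^k$ push down to the \emph{same} measure $\mu$, which gives uniform tightness — is what unlocks the argument and makes the rest of the estimate routine.
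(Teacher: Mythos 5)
Your decomposition is the same as the paper's, and your treatment of term (I) is essentially fine: uniform convergence of the projective maps (using the uniform bounds from \eqref{eq.bounded}) plus the fact that all $m^k$ project to $\mu$, hence are uniformly tight, does give $\int(\phi\circ F_{\sA^k}-\phi\circ F_\sA)\,dm^k\to 0$. (The paper sidesteps your tightness step for this term by testing only against \emph{uniformly} continuous bounded $\varphi$, which suffices for weak$^*$ convergence by Billingsley's Theorem~2.1.)

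The genuine gap is in term (II). You assert that $\phi\circ F_\sA$ is continuous and bounded, so that weak$^*$ convergence $m^k\to m$ finishes the job. But in this setting $\sA:\cX\to\GL(2,\C)$ is only a bounded \emph{measurable} function, so $F_\sA(\bx,[v])=(f(\bx),[A_{\sx_0}v])$ is in general not continuous, and $\phi\circ F_\sA$ is merely a bounded measurable function; weak$^*$ convergence of $(m^k)_k$ gives no information about $\int\phi\circ F_\sA\,dm^k$. This is precisely the point where the paper has to work: it applies Lusin's theorem to find a compact $K\subset M$ with $\mu(K)>1-\vep$ on which $\bx\mapsto A(\bx)$ is continuous, extends $\varphi\circ F_\sA|_{K\times\Proj(\C^2)}$ to a continuous bounded $\tilde\varphi$ by Tietze, applies weak$^*$ convergence to $\tilde\varphi$, and controls the discrepancy between $\varphi\circ F_\sA$ and $\tilde\varphi$ by $4\|\varphi\|\mu(K^c)$, using again that $m^k$ and the limit $m$ all project down to $\mu$. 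Your tightness trick alone cannot repair this, because the obstruction is not escape of mass but the discontinuity of $\sA$; you would need to add a Lusin--Tietze (or equivalent approximation) step for term (II) to make the argument complete.
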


\begin{proof}
Let $\varphi:M\times \Proj(\C^2)\to \R$ be any uniformly continuous bounded function.
By the theorem of Lusin, given any $\vep>0$ there is some compact set $K\subset M$ such that
$\mu(K)>1-\vep$ and $\sA:M \to \SL(2,\C)$ is continuous restricted to $K$.
Then $\varphi\circ F_\sA:K\times \Proj(\C^2)\to \R$ is continuous and so, by the extension
theorem of Tietze, it admits some continuous extension $\tilde{\varphi}:M\times \Proj(\C^2)\to \R$
to the whole space, with $\|\tilde{\varphi}\| \le \|\varphi\|$. We have
$$\begin{aligned}
|\int \varphi\,d(F_{\sA^k})_*m^k -\int \varphi\,d(F_{\sA})_*m|
=|\int \varphi\circ F_{\sA^k}\,dm^k -\int \varphi\circ F_\sA\,dm|
&\\\le |\int \varphi\circ F_{\sA^k}\,dm^k -\int \varphi\circ F_\sA\,dm^k|
+|\int \varphi\circ F_{\sA}\,dm^k -\int \varphi\circ F_\sA\,dm|
\end{aligned}
$$
The first term on the right hand side converges to zero when $k\to\infty$,
because $\varphi\circ F_{\sA^k}$ converges uniformly to $\varphi\circ F_{\sA}$.
The last term admits the following bound:
$$
\begin{aligned}
 |\int \varphi\circ F_{\sA}&\,dm^k -\int \varphi\circ F_\sA\,dm| \\
 & \le |\int \tilde{\varphi}\,dm^k -\int \tilde{\varphi}\,dm|+2\|\varphi\|(m^k+m)(K^c\times \Proj(\C^2)) \\
 & \le |\int \tilde{\varphi}\,dm^k -\int \tilde{\varphi}\,dm|+4\|\varphi\|\mu(K^c)
\end{aligned}
$$
The first term on the right hand side converges to zero when $k\to\infty$,
because $\tilde\varphi$ is continuous, and the second term is bounded by $4\|\varphi\|\vep$.
Since $\vep>0$ is arbitrary, this proves that
$$
\int \varphi\,d(F_{\sA^k})_*m^k \to \int \varphi\,d(F_{\sA})_*m
\quad\text{as $k\to\infty$,}
$$
for any uniformly continuous bounded function $\varphi$.
So (see Theorem~2.1 in Billingsley~\cite{Bil68}), the sequence
$(F_{\sA^k})_*m^k$ converges weakly$^*$ to $(F_{\sA})_*m$, as claimed.
\end{proof}

\begin{corollary}\label{c.uestadosconvergem}
Suppose that $\lambda_+(\sA,\sps)>0$ and let $m^u$ be the $u$-state defined by
\eqref{eq.mus}. Let $(A^k)_k$ be such that $d(A^k,A)\to 0$ as $k\to\infty$.
For each $k\ge 1$, let $m^u_k$ be an invariant  $u$-state for $(\sA^k,\sps)$ realizing
$\lambda_+(A^k,p)$. Then $(m^u_k)_k$ converges to $m^u$ in the weak$^*$ topology.
\end{corollary}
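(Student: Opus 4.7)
The plan is to pass to subsequential limits and identify the limit via the uniqueness statement in Remark~\ref{r.uniqueustaterealizing}. By Lemma~\ref{l.compactprohorov}, the sequence $(m^u_k)_k\subset\cM(\sps)$ is weak$^*$-precompact. Let $m^*$ denote the limit of an arbitrary weakly$^*$-convergent subsequence; I will show $m^*=m^u$. Since $\lambda_+(\sA,\sps)>0$, Theorem~\ref{t.Bernoulli} yields $\lambda_+(\sA^k,\sps)\to\lambda_+(\sA,\sps)>0$, so for all large $k$ we have $\lambda_-(\sA^k,\sps)<\lambda_+(\sA^k,\sps)$ and Remark~\ref{r.uniqueustaterealizing} applies to both $(\sA^k,\sps)$ and $(\sA,\sps)$.

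Invariance of $m^*$ is immediate from Lemma~\ref{l.imagensconvergem}: each $m^u_k$ is $F_{\sA^k}$-invariant, so $(F_{\sA^k})_*m^u_k=m^u_k$; the left-hand side converges weakly$^*$ to $(F_\sA)_*m^*$, while the right-hand side converges to $m^*$, giving $(F_\sA)_*m^*=m^*$.

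The main step is showing $\int\phi_\sA\,dm^*=\lambda_+(\sA,\sps)$. By Lemma~\ref{l.igualdade}, $\int\phi_{\sA^k}\,dm^u_k=\lambda_+(\sA^k,\sps)$, and the right-hand side tends to $\lambda_+(\sA,\sps)$. Hence it suffices to prove $\int\phi_{\sA^k}\,dm^u_k\to\int\phi_\sA\,dm^*$. Write
$$
\int\phi_{\sA^k}\,dm^u_k-\int\phi_\sA\,dm^*
= \int(\phi_{\sA^k}-\phi_\sA)\,dm^u_k+\Bigl(\int\phi_\sA\,dm^u_k-\int\phi_\sA\,dm^*\Bigr).
$$
For the first summand, the identity $\phi_{\sA^k}(\bx,[v])-\phi_\sA(\bx,[v])=\log(\|\sA^k(\bx)v\|/\|\sA(\bx)v\|)$ combined with $d(\sA^k,\sA)\to 0$ and the uniform bound $\log\|A^{\pm1}\|\le \const$ from \eqref{eq.bounded} yields uniform convergence $\phi_{\sA^k}\to\phi_\sA$, so this term tends to $0$. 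The second summand is the main obstacle, because $\phi_\sA$ is only measurable (the cocycle $\sA$ is merely measurable on $\cX$), so weak$^*$ convergence of $m^u_k$ does not give convergence of the integrals for free. I will handle it by the Lusin--Tietze device used in the proof of Lemma~\ref{l.imagensconvergem}: given $\vep>0$, choose a compact $K\subset M$ with $\mu(K)>1-\vep$ and $\sA|_K$ continuous, so that $\phi_\sA$ is continuous on $K\times\Proj(\C^2)$ and admits a continuous bounded extension $\tilde\phi$ to all of $M\times\Proj(\C^2)$ with $\|\tilde\phi\|_\infty\le\|\phi_\sA\|_\infty$. Since every $m^u_k$ and $m^*$ project down to $\mu$, the error $|\int(\phi_\sA-\tilde\phi)\,d\nu|$ is bounded by $2\|\phi_\sA\|_\infty\,\mu(K^c)<2\|\phi_\sA\|_\infty\vep$ for $\nu\in\{m^u_k,m^*\}$, while weak$^*$ convergence gives $\int\tilde\phi\,dm^u_k\to\int\tilde\phi\,dm^*$. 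Letting $\vep\to0$ closes the argument.

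Combining the above, $m^*$ is an $F_\sA$-invariant element of $\cM(\sps)$ realizing $\lambda_+(\sA,\sps)$, so $m^*=m^u$ by Remark~\ref{r.uniqueustaterealizing}. Because every weakly$^*$ convergent subsequence of $(m^u_k)_k$ must then have the common limit $m^u$, and the weak$^*$ topology on $\cM(\sps)$ is metrizable (the ambient space $M\times\Proj(\C^2)$ being polish), the full sequence converges to $m^u$, as desired.
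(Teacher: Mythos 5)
Your proof is correct and follows essentially the same route as the paper: weak$^*$ precompactness via Lemma~\ref{l.compactprohorov}, invariance of any subsequential limit via Lemma~\ref{l.imagensconvergem}, and identification of the limit by showing it realizes $\lambda_+(\sA,\sps)$ (using uniform convergence $\phi_{\sA^k}\to\phi_\sA$ and continuity of the exponents) together with the uniqueness in Remark~\ref{r.uniqueustaterealizing}. Your explicit Lusin--Tietze handling of the term $\int\phi_\sA\,dm^u_k\to\int\phi_\sA\,dm^*$ spells out a step the paper dispatches with a bare ``consequently,'' a worthwhile precision since $\phi_\sA$ is only measurable.
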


\begin{proof}
In view of the compactness Lemma~\ref{l.compactprohorov}, we only have to show that every
accumulation point $m$ of the sequence $(m^u_k)_k$ coincides with $m^u$. Restricting to a
subsequence if necessary, we may suppose that $m$ is the limit of $(m^u_k)_k$, not just an
accumulation point. We claim that $m$ is an $F$-invariant probability.
By definition, every $m_k^u$ projects down to $\mu$. Then, we may use
Lemma~\ref{l.imagensconvergem} to conclude that $(F_{\sA^k})_*m^u_k$ converges to
$(F_\sA)_*m$ as $k\to\infty$. Since each $m_k^u$ is assumed to be $F_{\sA^k}$-invariant,
this proves that $(F_A)_*m=m$, as claimed. The assumption implies that $(\phi_{A^k})_k$
converges to $\phi_A$, uniformly on $M\times\Proj(\CC^2)$. Consequently,
$$
\int\phi_A\,dm=\lim\int \phi_{\sA^k}\,dm^u_k.
$$
In addition, using Theorem~\ref{t.Bernoulli}:
$$
\lim \int \phi_{\sA^k}\,dm^u_k
=\lim \lambda_+(\sA^k,\sps)
=\lambda_+(\sA,\sps)
=\int \phi_\sA\,dm^u
$$
This proves that $m$ realizes $\lambda_+(A,p)$. From Remark~\ref{r.uniqueustaterealizing} we
conclude that $m=m^u$. This completes the proof of the corollary.
\end{proof}

Let us deduce Theorem~\ref{t.convergenciaemmedida}. We only have to show that
$$
\mu(\{x\in M: \angle(E_{\sA,x}^u,E_{\sA^k,x}^{u})<\vep\}) \to 1 \quad\text{when } k\to\infty,
$$
as the statement about stable spaces $E^s_A$ is analogous. By Theorem~\ref{t.Bernoulli},
the assumption $\lambda_+(A,p)>0$ implies $\lambda_+(A^k,p)>0$ for every large $k$.
Let $m^u$ and $m^u_k$, $k\ge 1$ be $u$-states for $A$ and $A^k$, $k\ge 1$ defined
as in \eqref{eq.mus}. By Corollary~\ref{c.uestadosconvergem}, $(m_k^u)_k$ converges to
$m^{u}$ in the weak$^*$ topology. The map $\psi:M\to\Proj(\CC^2)$, $\psi(x) = E_{\sA,\bx}^{u}$
is measurable map and its graph has full $m^u$-measure.
By the theorem of Lusin, given any $\vep>0$ we may find a compact set $K\subset M$
such that the restriction $\psi_K$ to $K$ is continuous and the $m^u$-measure of its graph
is bigger than $1-\vep$. Given $\delta>0$, let $V$ be an open neighborhood of the graph
of $\psi_K$ 
such that
$$
V\cap \big(K\times \Proj(\C^2)\big)\subset V_\delta:=\{(\bx,\xi)\in K\times \Proj(\C^2):\angle(\xi,\psi(\bx))<\delta\}.
$$
By the definition of $m_k^u$,
$$
m_k^u(V_\delta)=\mu(\{\bx \in K: d(E^{u}_{\sA_{k},\bx},E^{u}_{\sA,\bx})<\delta\}).
$$
By weak$^*$ convergence, $\liminf m_k^u(V) \ge m^u(V) \ge 1-\vep$. On the other hand,
$m^u_k(K\times \Proj(\C^2))=\mu(K) \ge 1-\vep$ for every $k$. Thus,
$$
m_k^u\big(V_\delta\big)
\ge m_k^u\big(V\cap \big(K\times \Proj(\C^2)\big)\big)
\ge 1-3\vep
$$
for every large $k$. Hence,
$\mu(\{\bx \in M: d(E^{u}_{\sA_{k},\bx},E^{u}_{\sA,\bx})<\delta\})\ge 1-3\vep$ for every large $k$.
Since $\delta$ and $\vep$ are arbitrary, this proves Theorem~\ref{t.convergenciaemmedida}.

\section{Concluding remarks}\label{s.final}

We are going to describe a construction of points of discontinuity of
the Lyapunov exponents as functions of the cocycle, relative to some
H\"older topology. This builds on and refines~\cite{Boc-un,Boc02,BcV05,Man83},
where it is shown that Lyapunov exponents are often discontinuous relative
to the $C^0$ topology. In the final section we list a few related open problems.

\subsection{An example of discontinuity}\label{ss.discontinuity}

Let $M=\Sigma_2$ be the shift with $2$ symbols,
endowed with the metric $d(\bx,\by) = 2^{-N(\bx,\by)}$, where
$$
N(\bx,\by) = \sup\{n\ge 0: \sx_n=\sy_n \text{ whenever } |n| < N\}.
$$
For any $r\in(0,\infty)$, the $C^r$ norm in the space of $r$-H\"older
continuous functions $L:M \to \cL(\CC^d,\CC^d)$ is defined by
$$
\|L\|_r = \sup_{\bx \in M} \|L(\bx)\| + \sup_{\bx \neq \by} \frac{\|L(\bx) - L(\by)\|}{d(\bx, \by)^r}.
$$
Consider on $M$ the Bernoulli measure associated to any probability
vector $(p_1, p_2)$ with positive entries and $p_1 \neq p_2$.
Given any $\sigma>1$, consider the (locally constant) cocycle $A: M \to \SL(2,\RR)$
defined by
$$
A(\bx) = \left(\begin{array}{cc}\sigma & 0 \\ 0 & \sigma^{-1}\end{array}\right)
\quad\text{if } \sx_0=1
$$
and
$$
A(\bx) = \left(\begin{array}{cc}\sigma^{-1} & 0 \\ 0 & \sigma\end{array}\right)
\quad\text{if } \sx_0=2.
$$

\begin{theorem}\label{t.discontinuity}
For any $r>0$ such that $2^{2r} < \sigma$ there exist $B:M\to \SL(2,\RR)$
with vanishing Lyapunov exponents and such that $\|A-B\|_r$ is arbitrarily close
to zero.
\end{theorem}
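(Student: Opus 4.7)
The strategy I would employ is the H\"older-category analogue of the Bochi--Mañ\'e rotation trick: construct $B$ by inserting small rotations into $A$ on a carefully chosen family of cylinders, arranged so that when composed along typical orbits the resulting cocycle is elliptic rather than hyperbolic. Since $p_1\neq p_2$, the cocycle $A$ is genuinely hyperbolic with invariant horizontal/vertical splitting and Lyapunov exponents $\pm|p_1-p_2|\log\sigma$; the goal is to produce a $C^r$-small perturbation $B$ whose $\mu$-typical orbit mixes between these directions at a rate that exactly balances the expansion of $A$.

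The candidate perturbation is $B(\bx) = A(\bx)\,R(\bx)$, with $R:M\to \operatorname{SO}(2,\RR)$ H\"older continuous and equal to the identity outside a chosen family $\mathcal C_n$ of cylinders of diameter $\sim 2^{-n}$. On each $C\in\mathcal C_n$, the map $R$ should rotate by an angle $\theta_n$, interpolated H\"older-continuously to the identity on a slight enlargement of $C$, so that
\[
\|R - I\|_r \;\lesssim\; \theta_n \cdot 2^{rn}.
\]
Taking $\theta_n = \varepsilon\cdot 2^{-rn}$ then gives $\|B-A\|_r \lesssim \varepsilon$. The cylinders in $\mathcal C_n$ must be chosen so that a $\mu$-typical orbit meets them at a positive density of times, and on both forward and backward sides of each visit in a matching pattern compatible with the diagonal $A$-action.

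The verification that $\lambda_\pm(B) = 0$ is the heart of the argument. The dynamical claim I would try to prove is that inserting a rotation of angle $\theta_n$ at the midpoint of a cylinder of diameter $2^{-n}$ modifies the iterate $A^n$ restricted to that cylinder by an amount which, after being sandwiched between the expanding and contracting diagonal factors of $A^{n/2}$ on either side, scales like $\theta_n\cdot \sigma^n = \varepsilon\cdot(\sigma/2^{r})^n$. Under the assumption $2^{2r}<\sigma$ we have $\sigma/2^{2r}>1$, and a direct computation shows that the trace of the resulting matrix $\mathrm{diag}(\sigma^{n/2},\sigma^{-n/2})\,R_{\theta_n}\,\mathrm{diag}(\sigma^{n/2},\sigma^{-n/2})$ drops below $2$ in absolute value precisely once $\theta_n\cdot\sigma^n \gtrsim 1$, i.e.\ once $n$ is large relative to the threshold set by $2^{2r}<\sigma$. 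This would put $B^n$ on the cylinder into the elliptic regime; iterating coherently along $\mu$-typical orbits and applying Kingman's subadditive ergodic theorem to $\log\|B^{kn}\|$ should then yield $\lambda_+(B) = 0$. The main obstacle, and where essentially all the technical work lies, is organizing the supports of $R$ so that the individual rotations combine \emph{coherently} (i.e.\ accumulate rather than cancel) along $\mu$-typical orbits, and so that the H\"older interpolation at the boundary of each cylinder does not destroy this coherence. Practically, this requires a Markov-tower-like arrangement over $\{1,2\}^{\Z}$ whose heights are comparable to $n$, with bases chosen so that the hyperbolic stretching of $A$ is systematically realigned by the inserted rotations. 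Exhibiting such a tower and controlling the cross-effects between insertions is the crux, and the condition $2^{2r}<\sigma$ is precisely the threshold at which a H\"older-continuous version of the scheme becomes feasible.
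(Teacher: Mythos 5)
Your plan has a genuine gap at its central step: the ellipticity claim is false as computed. The trace of
$\mathrm{diag}(\sigma^{n/2},\sigma^{-n/2})\,R_{\theta}\,\mathrm{diag}(\sigma^{n/2},\sigma^{-n/2})$
is $(\sigma^{n}+\sigma^{-n})\cos\theta$, which remains enormous for any small angle; it drops below $2$ only when $\theta$ is within $O(\sigma^{-n})$ of $\pi/2$, never when $\theta_n\,\sigma^n\gtrsim 1$ with $\theta_n=\vep\,2^{-rn}$ small. So a single small sandwiched rotation does not put the block in the elliptic regime, and the subsequent plan --- make typical long products elliptic, organize the insertions ``coherently'' over a Markov-tower, and conclude $\lambda_+=0$ via Kingman --- rests on exactly the step you concede is missing (``controlling the cross-effects between insertions is the crux''). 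The threshold you extract, $\sigma/2^{r}$, also does not match the hypothesis $2^{2r}<\sigma$, a sign that the H\"older bookkeeping has not been carried through: the relevant cylinders in a two-sided shift of tower height $n=2k+1$ only guarantee a separation $2^{-2k}$, so the H\"older penalty is $2^{2kr}$, not $2^{rn}$ against perturbation size per your interpolation.

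The paper's proof shows both difficulties can be bypassed. One takes a \emph{single} cylinder $Z_n$ whose first $n$ iterates are disjoint, and perturbs by locally constant matrices (no H\"older interpolation needed): a shear of size $\vep_n=\sigma^{-k}$ at the entrance and exit of the tower and one rotation by $\arctan\vep_n$ at the midpoint. The point is not ellipticity but amplification by the hyperbolicity of $A$ itself: the tiny tilt $(1,\vep_n)$ of the horizontal direction is expanded over $k$ iterates of $\mathrm{diag}(\sigma^{-1},\sigma)$ into $(\vep_n,1)$, so an exponentially small perturbation exchanges the horizontal and vertical bundles exactly after one pass through the tower ($B_n^n H_\bx=V_{f^n(\bx)}$, $B_n^n V_\bx=H_{f^n(\bx)}$), and the $C^r$ cost is $\sigma^{-k}\cdot 2^{2kr}=(2^{2r}/\sigma)^k\to 0$, which is precisely where $2^{2r}<\sigma$ enters. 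The conclusion $\lambda_\pm(B_n)=0$ then needs no coherence of insertions along orbits and no Kingman estimate: inducing on $Z_n$, the induced cocycle permutes $H$ and $V$, so the measure with fiber disintegration $\tfrac12(\delta_{H_\bx}+\delta_{V_\bx})$ is invariant and ergodic; if the exponents were nonzero it would have to be a convex combination of the Dirac $u$- and $s$-states (Lemma~\ref{l.combinacaolinear}), which is impossible since its conditionals charge two points per fiber. You would need to replace your elliptic-trace mechanism and your ``coherent accumulation'' step by something of this kind (or an honest quantitative substitute) for the argument to close.
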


Since the Lyapunov exponents $\lambda_\pm(A) = \pm |p_1 - p_2| \log \sigma $ of
$A:M\to\SL(2,\RR)$ are non-zero, it follows that $A$ is a point of discontinuity for
the Lyapunov exponents relative to the $C^r$ topology.

The proof of Theorem~\ref{t.discontinuity} is an adaptation of ideas of
Knill~\cite{Kni91} and Bochi~\cite{Boc-un,Boc02}. Here is an outline.
Notice that the unperturbed cocycle $A$ preserves both the horizontal line
bundle $H_\bx=\{\bx\}\times \RR(1,0)$ and the vertical line bundle
$V_\bx=\{\bx\}\times\RR(0,1)$. Then, the Oseledets subspaces must coincide
with $H_\bx$ and $V_\bx$ almost everywhere. We choose cylinders $Z_n\subset M$
whose first $n$ iterates $f^i(Z_n)$, $0 \le i \le n-1$ are pairwise disjoint.
Then we construct cocycles $B_n$ by modifying $A$ on some of these iterates
so that
\begin{equation*}
B_n^n(x)H_\bx = V_{f^n(\bx)} \quad\text{and}\quad
B_n^n(x)V_\bx = H_{f^n(\bx)} \quad\text{for all $\bx\in Z_n$.}
\end{equation*}
We deduce that the Lyapunov exponents of $B_n$ vanish.
Moreover, by construction, each $B_n$ is constant on every atom of some finite
partition of $M$ into cylinders. In particular, $B_n$ is H\"older continuous
for every $r>0$. From the construction we also get that
\begin{equation}\label{eq.rnorm}
\|B_n - A\|_r \leq \const \left(2^{2r}/\sigma\right)^{n/2}
\end{equation}
decays to zero as $n\to\infty$. This is how we get the claims in the theorem.
Now let us fill-in the details of the proof.

Let $n=2k+1$ for some $k\ge 1$ and $Z_n=[0;2, \dots, 2, 1, \dots, 1, 1]$ where
the symbol $2$ appears $k$ times and the symbol $1$ appears $k+1$ times.
Notice that the $f^i(Z_n)$, $0 \le i \le 2k$ are pairwise disjoint. Let
\begin{equation}\label{eq.perturbation}
\vep_n = \sigma^{-k} \quand \delta_n = \arctan \vep_n.
\end{equation}
Define $R:M \to \SL(2,\RR)$ by
$$\begin{aligned}
R(\bx) & = \text{rotation of angle } \delta_n
\quad\text{if } \bx \in f^{k}(Z_n) \\
R(\bx) & = \left(\begin{array}{cc} 1 & 0 \\ \vep_n & 1\end{array}\right)
\quad\text{if } \bx \in Z_n \cup f^{2k}(Z_n) \\
R(\bx) & = \id \quad\text{in all other cases.}
\end{aligned}$$
and then take $B_n=AR_n$.

\begin{lemma}\label{l.troca}
$B_n^{n}(\bx)H_\bx = V_{f^{n}(\bx)}$ and $B_n^{n}(\bx)V_\bx = H_{f^{n}(\bx)}$
for all $\bx\in Z_n$.
\end{lemma}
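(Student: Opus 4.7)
The plan is a direct computation, propagating $H_\bx$ and $V_\bx$ forward through the $n=2k+1$ iterates. Since the cylinders $f^i(Z_n)$, $0\le i\le 2k$, are pairwise disjoint, and $R$ is non-trivial only on $Z_n\cup f^k(Z_n)\cup f^{2k}(Z_n)$, along the orbit of any $\bx\in Z_n$ the cocycle $B_n=AR$ reduces to the diagonal action of $A$ except at the three exceptional indices $i=0$, $i=k$, and $i=2k$. The symbol pattern of $Z_n$ fixes $A(f^i\bx)=\operatorname{diag}(\sigma^{-1},\sigma)$ for $0\le i<k$ and $A(f^i\bx)=\operatorname{diag}(\sigma,\sigma^{-1})$ for $k\le i\le 2k$.

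Starting from $H_\bx=\RR(1,0)$: the shear at $i=0$ followed by $k$ diagonal actions produces the vector $(\sigma^{-k},\sigma^k\vep_n)=(\sigma^{-k},1)$ at $f^k\bx$, using $\vep_n=\sigma^{-k}$. This vector has slope $\sigma^k=\cot\delta_n$, so the rotation by $\delta_n$ at $i=k$ sends it to a purely vertical vector. Diagonal matrices and the shear at $i=2k$ both preserve the vertical direction, so the remaining iterates keep it vertical, giving $B_n^n(\bx)H_\bx=V_{f^n(\bx)}$.

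Starting from $V_\bx=\RR(0,1)$: the shear at $i=0$ fixes this vector, and diagonal stretching produces $(0,\sigma^k)$ at $f^k\bx$. The rotation at $i=k$ sends it to a vector proportional to $(-\vep_n\sigma^k,\sigma^k)=(-1,\sigma^k)$, and the diagonal iterates $\operatorname{diag}(\sigma,\sigma^{-1})$ for $i=k,\dots,2k-1$ stretch this to a vector proportional to $(-\sigma^k,1)$ at $f^{2k}\bx$. Then the shear at $i=2k$ sends $(-\sigma^k,1)$ to $(-\sigma^k,1-\vep_n\sigma^k)=(-\sigma^k,0)$, a horizontal vector, and the final diagonal action preserves horizontality; hence $B_n^n(\bx)V_\bx=H_{f^n(\bx)}$.

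The argument is driven entirely by the two cancellations $1-\vep_n\sigma^k=0$, executed by the rotation at time $k$ and the shear at time $2k$ respectively, both engineered by the design $\vep_n=\sigma^{-k}$ and $\delta_n=\arctan\vep_n$. There is no substantial obstacle; one only needs to verify that $f^i\bx\notin Z_n\cup f^k(Z_n)\cup f^{2k}(Z_n)$ for $i\notin\{0,k,2k\}$ so that $R(f^i\bx)=\id$ at those steps, which is a direct consequence of the pairwise disjointness of $f^i(Z_n)$, $0\le i\le 2k$.
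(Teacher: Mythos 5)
Your proposal is correct and follows essentially the same route as the paper: a direct step-by-step propagation of $H_\bx$ and $V_\bx$ through the $2k+1$ iterates, using the disjointness of the $f^i(Z_n)$ to reduce everything to the three exceptional times $0$, $k$, $2k$ and the cancellations coming from $\vep_n\sigma^k=1$ and $\tan\delta_n=\vep_n$. Your intermediate checkpoints (e.g.\ direction $\RR(\vep_n,1)$ at time $k$ and $\RR(-\sigma^k,1)=\RR(-1,\vep_n)$ at time $2k$) agree with the paper's, and your bookkeeping is in fact carried out more explicitly than the paper's terse statement of the same computation.
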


\begin{proof}
Notice that for any $\bx\in Z_n$,
$$\begin{aligned}
B_n^k(\bx)H_\bx & = \RR (\vep_n, 1) \quand B_n^k(\bx)V_\bx = V_{f^k(\bx)} \\
B_n^{k+1}(\bx)H_\bx & = V_{f^{k+1}(\bx)} \quand B_n^{k+1}(\bx)V_\bx = \RR (-\vep_n,1)\\
B_n^{2k}(\bx)H_\bx & = V_{f^{2k}(\bx)} \quand B_n^{2k}(\bx)V_\bx = \RR (-1, \vep_n).
\end{aligned}$$
The claim follows by iterating one more time.
\end{proof}

\begin{lemma}\label{l.norma}
There exists $C>0$ such that $\|B_n - A\|_r \le C \left(2^{2r}/\sigma\right)^k$
for every $n$.
\end{lemma}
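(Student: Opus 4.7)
The plan is to exploit the fact that both $A$ and $R_n$ are locally constant cocycles on cylinder partitions of controlled depth, which reduces the computation of the $C^r$-norm to a sup-norm estimate combined with a lower bound on distances between points in distinct atoms.

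First I would estimate the sup norm. Write $B_n - A = A(R_n - I)$. On $Z_n \cup f^{2k}(Z_n)$ the matrix $R_n(\bx) - I$ has entries of size $\vep_n = \sigma^{-k}$, and on $f^k(Z_n)$ the rotation by $\delta_n = \arctan \vep_n$ also differs from the identity by $O(\vep_n)$; on the complement $R_n - I = 0$. Combined with $\|A(\bx)\| \le \sigma$, this gives $\|B_n - A\|_\infty \le c_1 \sigma \cdot \vep_n = c_1\sigma^{1-k}$ for a universal constant $c_1$.

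Next I would control the H\"older seminorm by a locality argument. The matrix $A(\bx)$ depends only on $x_0$, while $R_n(\bx)$ is determined by the coordinates $x_{-2k}, \ldots, x_{2k}$ (the three cylinders $Z_n$, $f^k(Z_n)$, $f^{2k}(Z_n)$ use coordinates $\{0,\ldots,2k\}$, $\{-k,\ldots,k\}$, $\{-2k,\ldots,0\}$ respectively). Hence $(B_n-A)(\bx)$ depends only on the block $(x_{-2k},\ldots,x_{2k})$. If $\bx \neq \by$ satisfy $(B_n - A)(\bx) \neq (B_n - A)(\by)$, then $\sx_m \neq \sy_m$ for some $|m| \le 2k$, which forces $N(\bx,\by) \le 2k$ and therefore
\[
d(\bx,\by)^r \ge 2^{-2kr}.
\]
For such pairs the trivial bound gives
\[
\frac{\|(B_n-A)(\bx)-(B_n-A)(\by)\|}{d(\bx,\by)^r} \le 2\|B_n-A\|_\infty \cdot 2^{2kr} \le 2c_1 \sigma \cdot \bigl(2^{2r}/\sigma\bigr)^{k}.
\]

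Finally I would add the two contributions. Since $\sigma^{1-k} \le \sigma(2^{2r}/\sigma)^k$ (because $2^{2r}>1$), the sup norm is absorbed and
\[
\|B_n - A\|_r \le C\,\bigl(2^{2r}/\sigma\bigr)^{k}
\]
with $C = 3c_1\sigma$. The only substantive observation is the locality of $R_n$ in the $2k$-block of central coordinates; once this is identified, the estimate is immediate and the restriction $2^{2r} < \sigma$ is what makes the bound useful, yielding \eqref{eq.rnorm}.
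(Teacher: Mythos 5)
Your proof is correct and follows essentially the same route as the paper: a sup-norm bound of order $\sigma\vep_n=\sigma^{1-k}$ together with the observation that the perturbation is constant on cylinders determined by the coordinates $|m|\le 2k$, so any nonzero H\"older quotient has $d(\bx,\by)^r\ge 2^{-2kr}$, yielding the factor $\left(2^{2r}/\sigma\right)^k$. Your phrasing via the locality of $B_n-A$ in the central $2k$-block even handles in one stroke the several cases the paper lists separately (same or different cylinders, membership in the various $f^i(Z_n)$), but the estimate and constants are the same.
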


\begin{proof}
Let $L_n=A-B_n$. Clearly, $\sup\|L\| \le \sup\|A\|\,\|\id - R_n\|$
and this is bounded by $\sigma \vep_n$. Now let us estimate the second term
in the definition \eqref{eq.rnorm}. If $\bx$ and $\by$ are not in the same  cylinder
$[0;a]$ then $d(\bx,\by)=1$, and so
\begin{equation}\label{eq.bound1}
\frac{\|L_n(\bx) - L_n(\by)\|}{d(\bx, \by)^r}
\le 2 \sup \|L_n\| \le 2\sigma \vep_n.
\end{equation}
From now on we suppose $\bx$ and $\by$ belong to the same cylinder.
Then, since $A$ is constant on cylinders,
$$
\frac{\|L_n(\bx) - L_n(\by)\|}{d(\bx, \by)^r}
 = \frac{\|A(\bx)(R_n(\bx)-R_n(\by))\|}{d(\bx, \by)^r}
 \le \sigma \frac{\|R_n(\bx)-R_n(\by)\|}{d(\bx, \by)^r}.
$$
If neither $\bx$ nor $\by$ belong to $Z_n \cup f^k(Z_n) \cup f^{2k}(Z_n)$
then $R_n(\bx)$ and $R_n(\by)$ are both equal to $\id$, and so the
expression on the right vanishes.
If $\bx$ and $\by$ belong to the same $f^i(Z_n)$ then $R_n(\bx)=R_n(\by)$
and so, once more, the expression on the right vanishes.
We are left to consider the case when one of the points belongs to some
$f^i(Z_n)$ and the other one does not. Then $d(\bx,\by)\ge 2^{-2k}$ and
so, using once more that $\|\id - R_n\| \le \vep_n$ at every point,
$$
\frac{\|L_n(\bx) - L_n(\by)\|}{d(\bx, \by)^r}
\le \sigma \frac{\|R_n(\bx)-R_n(\by)\|}{d(\bx, \by)^r}
\le 2 \sigma \vep_n 2^{2kr}.
$$
Noting that this bound is worst than \eqref{eq.bound1}, we conclude that
$$
\|L_n\|_ r
 \le \sigma\vep_n + 2 \sigma \vep_n 2^{2kr}
 \le 
 3 \sigma \left(2^{2r}/\sigma\right)^k
$$
Now it suffices to take $C=3\sigma$.
\end{proof}

Now we want to prove that $\lambda_\pm(B_n)=0$ for every $n$.
Let $\mu_n$ be the normalized restriction of $\mu$ to $Z_n$ and
$f_n:Z_n \to Z_n$ be the first return map (defined on a full
measure subset). Indeed,
$$
Z_n=\bigsqcup_{b\in \cB} [0;w,b,w]
\quad \text{(up to a zero measure subset)}
$$
where $w=(1, \dots, 1, 2, \dots, 2, 2)$ and the union is over the
set $\cB$ of all finite words $b=(b_1, \dots, b_s)$ not having
$w$ as a sub-word. Moreover,
$$
f_n \mid [0;w, b, w] = f^{n+s} \mid [0;w, b, w]
\quad\text{for each $b\in\cB$.}
$$
Thus, $(f_n,\mu_n)$ is a Bernoulli shift with an infinite alphabet
$\cB$ and probability vector given by $p_b=\mu_n([0;w, b, w])$.
Let $\hat B_n: Z_n \to \SL(2,\RR)$ be the cocycle induced by $B$
over $f_n$, that is,
$$
\hat B_n \mid [0;w, b, w] = B^{n+s}_n \mid [0;w, b, w]
\quad\text{for each $b\in\cB$.}
$$
It is a well known basic fact (see~\cite[Proposition~2.9]{Vi07}, for instance)
that the Lyapunov spectrum of the induced cocycle is obtained multiplying
the Lyapunov spectrum of the original cocycle by the average return time.
In our setting this means
$$
\lambda_\pm(\hat B_n) = \frac{1}{\mu(Z_n)} \lambda_\pm(B_n).
$$
Therefore, it suffices to prove that $\lambda_\pm(\hat B_n)=0$ for every $n$.

Indeed, suppose the Lyapunov exponents of $\hat B_n$ are non-zero and let
$E^u_\bx \oplus E^s_\bx$ be the Oseledets splitting (defined almost everywhere
in $Z_n$). Consider the probability measures $m^u$ and $m^s$ for the cocycle
$\hat B_n$ defined as in \eqref{eq.mus}. The key observation is that, as a
consequence of Lemma~\ref{l.troca}, the cocycle $\hat B_n$ permutes the vertical
and horizontal subbundles:
\begin{equation}\label{eq.troca}
\hat B_n(\bx)H_\bx = V_{f_n(\bx)} \quand \hat B_n(\bx)V_\bx = H_{f_n(\bx)}
\quad\text{for all $\bx\in Z_n$.}
\end{equation}
Let $m$ be the measure defined on $M\times\Proj(\RR^2)$ by
$$
m_n(X) = \frac 12 (\mu_n\left(\{\bx\in Z_n: V_\bx\in X \}\right)+\mu_n\left(\{\bx\in Z_n: H_\bx \in X \}\right).
$$
That is, $m_n$ projects down to $\mu_n$ and its disintegration is given
by $\bx \mapsto (\delta_{H_\bx} + \delta_{V_\bx})/2$. It is clear from \eqref{eq.troca}
that $m_n$ is $\hat B_n$-invariant.

\begin{lemma}\label{l.ergodicity}
The probability measure $m_n$ is ergodic.
\end{lemma}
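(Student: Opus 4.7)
The plan is to reduce ergodicity of $m_n$ under $F_{\hat B_n}$ to ergodicity of an iterate of the base shift $(f_n,\mu_n)$, exploiting the permutation property \eqref{eq.troca} which says $\hat B_n$ swaps $H$ and $V$ along orbits.

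First, I would encode an $F_{\hat B_n}$-invariant measurable set $E \subset Z_n \times \Proj(\RR^2)$ by its two ``sheets''
\[
E_H = \{\bx \in Z_n : (\bx, H_\bx) \in E\}, \qquad E_V = \{\bx \in Z_n : (\bx, V_\bx) \in E\}.
\]
Since $m_n$ projects to $\mu_n$ with fiber disintegration $\tfrac12(\delta_{H_\bx} + \delta_{V_\bx})$, one has $m_n(E) = \tfrac12(\mu_n(E_H) + \mu_n(E_V))$, and $m_n$ gives full weight to the graphs of $H$ and $V$; so proving that $\mu_n(E_H)$ and $\mu_n(E_V)$ both lie in $\{0,1\}$ suffices.

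Next, I would translate $F_{\hat B_n}$-invariance of $E$ into a relation between $E_H$ and $E_V$. By \eqref{eq.troca}, $F_{\hat B_n}(\bx,H_\bx) = (f_n(\bx), V_{f_n(\bx)})$ and $F_{\hat B_n}(\bx,V_\bx) = (f_n(\bx), H_{f_n(\bx)})$, so (modulo $\mu_n$-null sets)
\[
E_H = f_n^{-1}(E_V) \quad\text{and}\quad E_V = f_n^{-1}(E_H).
\]
In particular $f_n^{-2}(E_H) = E_H$ and $f_n^{-2}(E_V) = E_V$, so both sheets are $f_n^2$-invariant, and since $f_n$ preserves $\mu_n$, they have the same measure.

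The final step is to invoke ergodicity of $f_n^2$: because $(f_n,\mu_n)$ is a Bernoulli shift (over the countable alphabet $\cB$), it is mixing, hence $f_n^k$ is ergodic for every $k\ge 1$. This forces $\mu_n(E_H), \mu_n(E_V)\in\{0,1\}$, and since they coincide, $m_n(E) \in \{0,1\}$, which is ergodicity. The only mild subtlety—really the only place care is needed—is the Bernoulli/mixing property for the induced system; this is standard (the first-return map of a Bernoulli shift to a cylinder is itself Bernoulli, with the displayed alphabet $\cB$ and weights $p_b = \mu_n([0;w,b,w])$), so there is no serious obstacle.
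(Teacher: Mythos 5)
Your proof is correct and follows essentially the same route as the paper: both arguments use \eqref{eq.troca} to show that the $H$- and $V$-sheets of an invariant set are swapped by $f_n$, hence $f_n^2$-invariant, and then invoke that the second iterate of the Bernoulli map $(f_n,\mu_n)$ is ergodic. Your sheet-based formulation is a slightly more direct packaging of the paper's argument (which phrases it as a contradiction via the partition into the sets $X_0$, $X_2$, $X_H$, $X_V$), but the key ideas are identical.
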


\begin{proof}
Suppose there is an invariant set $\cX\subset M \times\Proj(\RR^2)$ with $m_n(\cX)\in(0,1)$.
Let $X_0$ be the set of $\bx\in Z_n$ whose fiber $\cX \cap (\{\bx\}\times\Proj(\RR^2))$
contains neither $H_\bx$ nor $V_\bx$. In view of \eqref{eq.troca}, $X_0$ is an
$f_n$-invariant set and so its $\mu_n$-measure is either $0$ or $1$.
Since $m_n(\cX)>0$, we must have $\mu_n(X_0)=0$. The same kind of argument shows
that $\mu_n(X_2)=0$, where $X_2$ is the set of $\bx\in Z_n$ whose fiber contains both
$H_\bx$ and $V_\bx$. Now let $X_H$ be the set of $\bx\in Z_n$ whose fiber contains
$H_\bx$ but not $V_\bx$, and let $X_V$ be the set of $\bx\in Z_n$ whose fiber contains
$V_\bx$ but not $H_\bx$. The previous observations show that $X_H \cup X_V$ has full
$\mu_n$-measure and it follows from \eqref{eq.troca} that
$$
f_n(X_H) = X_V \quand f_n(X_V)=X_H.
$$
Thus, $\mu_n(X_H) = 1/2 = \mu_n(X_V)$ and $f_n^2(X_H)=X_H$ and $f_n^2(X_V)=X_V$.
This is a contradiction because $f_n$ is Bernoulli and, in particular,
the second iterate is ergodic.
\end{proof}

By Lemma~\ref{l.combinacaolinear}, the invariant measure $m_n$ is a linear combination
of $m^u$ and $m^s$. Then, in view of Lemma~\ref{l.ergodicity}, $m_n$ must coincide with
either $m^s$ and $m^u$. This is a contradiction, because the conditional probabilities
of $m_n$ are supported on exactly two points on each fiber, whereas the conditional
probabilities of either $m^u$ and $m^s$ are Dirac masses on a single point.
This contradiction proves that the Lyapunov exponents of $\hat B_n$ do vanish for
every $n$, and that concludes the proof of Theorem~\ref{t.discontinuity}.

The same kind of argument shows that, in general, one can expect continuity to hold when
some of the probabilities $p_i$ vanishes:

\begin{remark}\label{r.vanishingpi}(Kifer~\cite{Ki82b})
Take $d=2$, a probability vector $\sps=(p_1,p_2)$ with non-negative coefficients,
and a cocycle $\sA=(A_1,A_2)$ defined by
$$
A_1 = \left(
         \begin{array}{cc}
           \sigma & 0 \\
           0 & \sigma^{-1} \\
         \end{array}
       \right) \quad \text{and} \quad
A_2  = \left(
         \begin{array}{cc}
           0 & -1 \\
           1 & 0 \\
         \end{array}
       \right),
$$
where $\sigma>1$. By the same arguments as we used before, $\lambda_\pm(\sA,\sps)=0$
for every $\sps\in\Lambda_2$. In this regard, observe that the cocycle induced by $\sA$
over the cylinder $[0;2]$ exchanges the vertical and horizontal directions,
just as in \eqref{eq.troca}. Now, it is clear that $\lambda_\pm(\sA,(1,0))=\pm\log \sigma$.
Thus, the Lyapunov exponents are discontinuous at $(\sA,(1,0))$.
\end{remark}

\subsection{Open problems}

\begin{problem}
Does continuity extend to
\begin{itemize}
\item[(a)] unbounded cocycles satisfying an integrability condition~?
\item[(b)] locally constant cocycles over Bernoulli shifts~?
\item[(c)] locally constant cocycles over Markov shifts~?
\item[(d)] locally constant cocycles in any dimension $d$~?
\item[(e)] H\"older continuous cocycles satisfying the fiber bunched condition~?
\end{itemize}
\end{problem}

In (a) we have in mind the condition $\log\|A^{\pm 1}\| \in L^1(\mu)$. Since it involves
both the cocycle and the probability measure, in this case the topology should be defined
in the space of pairs $(\sA,\sps)$. By \emph{locally constant} in (b) and (c) we mean that
$A(\bx) $ depends on a bounded number of coordinates of $\bx$. We have treated the
case when $A(\bx)$ depends only on the zeroth coordinate of $\bx$.  In (d) it suffices
to consider the largest Lyapunov exponent: then, using exterior powers in a well-known way
(see Peres~\cite{Pe91}, for instance), one would get continuity for all Lyapunov exponents.
An interesting special case to look at are symplectic cocycles, that is, such that every $A(\bx)$
preserves some given symplectic form. See \cite{AV3} for the definition of the fiber bunching
condition in (e). By Theorem~\ref{t.discontinuity} we can not expect continuity to hold for
general H\"older cocycles. On the other hand, the hypotheses of the theorem is incompatible
with fiber bunching.

\begin{problem}
Can we say more about the regularity of the Lyapunov exponents as functions of the
cocycle: H\"older continuity ? Lipschitz continuity? Differentiability?
\end{problem}

Partial answers and related results were obtained by Le Page~\cite{LP89} and Peres~\cite{Pe91}.


\end{document}